\newtheorem{definition}{Definition}[section]
\newtheorem{lemma}{Lemma}[section]
\newtheorem{corollary}{Corollary}[section]
\newtheorem{theorem}{Theorem}[section]
\newcommand{\del}{\partial}
\renewcommand{\theta}{\vartheta}
\renewcommand{\phi}{\varphi}
\newcommand{\veccc}[3]{\left ( \begin{array}{c}#1\\#2\\#3\\ \end{array}\right )}
\newcommand{\dd}{\mathrm{d}}
\newcommand{\id}{\mathbb{1}}
\newcommand{\grad}{\mathrm{grad\,}}
\renewcommand{\div}{\mathrm{div\,}}
\renewcommand{\vec}{\mathbf}
\newcommand{\sign}{\mathrm{\,sign\,}}
\newcommand{\ii}{\mathbb{i}}
\newcommand{\cfl}{\textsc{cfl }}
\newcommand{\regdist}[1]{\underbracket[0.5pt][2pt]{\,#1\,}}
\newcommand{\test}{\psi}
\begin{document}

\begin{center} \Large
Exact solution and the multidimensional Godunov scheme for the acoustic equations

\vspace{1cm}

\date{}
\normalsize

Wasilij Barsukow\footnote{Institute for Mathematics, Zurich University, 8057 Zurich, Switzerland}, 
Christian Klingenberg\footnote{Institute for Mathematics, Wuerzburg University, Emil-Fischer-Strasse 40, 97074 Wuerzburg, Germany}
\end{center}



\begin{abstract}
The acoustic equations derived as a linearization of the Euler equations are a valuable system for studies of multi-dimensional solutions. Additionally they possess a low Mach number limit analogous to that of the Euler equations. Aiming at understanding the behaviour of the multi-dimensional Godunov scheme in this limit, first the exact solution of the corresponding Cauchy problem in three spatial dimensions is derived. The appearance of logarithmic singularities in the exact solution of the 4-quadrant Riemann Problem in two dimensions is discussed. The solution formulae are then used to obtain the multidimensional Godunov finite volume scheme in two dimensions. It is shown to be superior to the dimensionally split upwind/Roe scheme concerning its domain of stability and ability to resolve multi-dimensional Riemann problems. It is shown experimentally and theoretically that despite taking into account multi-dimensional information it is, however, not able to resolve the low Mach number limit.
\end{abstract}

\section{Introduction}

Hyperbolic systems of PDEs in multiple spatial dimensions exhibit a richer phenomenology than their one-dimensional counterparts. In the context of ideal hydrodynamics the most prominent such feature is vorticity. Vortical structures appear virtually everywhere in multi-dimensional flows, for example also in regions of hydrodynamical instability. Nontrivial incompressible flows also only exist in multiple spatial dimensions.

Numerical methods should reproduce such features. The methods that are dealt with in the paper are finite volume methods. They interpret the discrete degrees of freedom as averages over the computational cells. The temporal evolution of the averages is given by fluxes over the cell boundaries. One possibility to deal with the multi-dimensionality is to compute these fluxes using only one-dimensional information perpendicular to the cell boundary. Thus, for the complete cell update one-dimensional information is collected from different directions. Such methods are called \emph{dimensionally split}, and are widely used due to their simplicity.

Several shortcomings of such methods have been noticed e.g. in \cite{morton01,guillard04,dellacherierieper10}. They concern the treatment of vorticity and the incompressible (low Mach number) limit, i.e. a bad resolution of multi-dimensional features. Therefore it has been suggested to incorporate truly multi-dimensional information into the finite volume methods in a variety of ways: \cite{colella90,morton01,balsara12,leveque97,fey98a,fey98b,roe17} and many others. Attempts to construct Godunov schemes using exact solutions of multi-dimensional Riemann Problems (see e.g. \cite{zheng12}) suffer from the high complexity of the occurring solutions.

A path that circumvents solving the Euler equations directly has been suggested e.g. in \cite{godlewski13,chalons13,roe17}. The advective operator contained in the Euler equations is taken into account differently than the rest of the equations (which often is called \emph{acoustic operator}). Whereas the solution to advection in multiple dimensions is not very different from its one-dimensional counterpart, acoustics exhibits a number of new features. This has led \cite{abgrall93,li02,morton01,lukacova04a,amadori2015,dellacherierieper10,barsukow17lilleproceeding,barsukow17a} and others to studies of the linearized acoustic operator. 

This paper aims at deriving the multi-dimensional Godunov scheme for linear acoustics {using the exact multi-dimensional solution, because -- even if it is more complicated -- one might expect it to fulfill more of the fundamental properties of the PDE.} The derivation of any numerical scheme makes use of a number of approximations, and we aim at excluding as many as possible. The studies of this paper shall in particular help understanding the influence of the approximations on the ability of the scheme to resolve the low Mach number limit. 

{It has been shown in \cite{guillard04} that an exact one-dimensional Riemann solver introduces terms, which spoil the numerical solution in the low Mach number limit -- despite the fact that the solution of the Riemann problem is exact. However, in one spatial dimension the incompressible limit is trivial, and there are also no low Mach number artefacts (\cite{dellacherierieper10}). The low Mach number artefacts found in \cite{guillard04} appear in multiple spatial dimensions when the Riemann solver is used in a dimensionally split way. In view of the multi-dimensional nature of the low Mach number limit, the natural question therefore is whether the dimensional splitting itself is responsible for the artefacts -- because otherwise the Riemann solver is exact and there is no other approximation apart from the choice of the reconstruction. This is supported by the existence of multi-dimensional methods which are low Mach number compliant without any fix (\cite{barsukow17a}). Unfortunately, these methods lack a derivation from first principles, which is commonly considered an advantage of Godunov schemes. This paper therefore studies a multi-dimensional Godunov scheme thus excluding both dimensional splitting and approximate evolution as reasons for the failure to resolve the low Mach number limit. The remaining approximations are of a fundamental nature and cannot be removed without questioning the Godunov procedure itself.}

In order to follow the \emph{reconstruction}--\emph{evolution}--\emph{average} strategy the exact solution of the corresponding Cauchy problem shall be used. In other words, the aim is to have a numerical scheme where the evolution operator is exact.

The exact solution may be expressed in various representations, which differ by their applicability to the derivation of a Godunov scheme. Using bicharacteristics, for example, in \cite{ostkamp97,lukacova04a} analytical relations of the shape
\begin{align}
 q(t + \Delta t) = L_1[q(t)] \,\,+\!\!\! \int\limits_t^{t + \Delta t} \!\!\! \dd \tau \, L_2[q(\tau)] \label{eq:bicharimplicit}
\end{align}
are derived, which connect the solution $q(t + \Delta t)$ at time $t + \Delta t$ with the data $q(t)$ at time $t$ via a so called \emph{mantle integral}. This latter involves the solution at all intermediate times. $L_1$ and $L_2$ are certain linear operators, see e.g. Equations (2.14)--(2.16) in \cite{lukacova04a} for details. E.g. in \cite{lukacova04a}, numerical schemes are derived by carefully \emph{approximating} the integral in \eqref{eq:bicharimplicit} (e.g. in \cite{ostkamp97}, or \cite{lukacova04}, Equations (4.12)-(4-15), or \cite{lukacova00}, Equations (4.13)--(4.15)). Moreover, in order to extend the methodology to the Euler equations, a local linearization is employed (e.g. in \cite{lukacova04a}, p. 18).

The Godunov scheme studied here shall use an \emph{exact} evolution operator and the only approximation is in the reconstruction step. Therefore formulae derived and used in \cite{lukacova04,lukacova04a} cannot be employed here. An exact solution operator is needed which relates the solution at time $t + \Delta t$ solely to the data at time $t$.

Such operators have appeared in \cite{eymann13,roe17a,franck17} under the assumption of smooth initial data. Having the Godunov scheme in mind, however, it is necessary to study the Cauchy problem with \emph{discontinuous} initial data. In order to achieve this, it turns out to be necessary to consider distributional solutions. This paper thus for the first time gives a detailed derivation of the distributional solution to the Cauchy problem of linear acoustics, without the restriction to smooth initial data. In this paper a formula is obtained for the exact evolution which expresses the solution at time $t$ in function of the initial data at $t=0$ directly. 

Our exact solution formulae are interesting from a theoretical viewpoint as well. Before using them for the derivation of a numerical scheme, some of the analytical properties of solutions to linear acoustics are studied. The new formulae are applied to a two-dimensional Riemann Problem for linear acoustics. In \cite{lukacova03} the solution outside the sonic circle is presented. Based on \cite{abgrall93,gilquin93b,gilquin96}, in \cite{li02} and \cite{amadori2015} (Chapter 6.2) the solution inside the sonic circle is derived for linear acoustics using a self-similarity ansatz. It has been found that the velocity of the solution, in general, is unbounded at the origin, and that measure-valued (Dirac delta) vorticity appears. This demonstrates once more the need to interpret the solution to a multi-dimensional Riemann problem as a distributional one. In this paper we justify the observed singularity by using the framework of distributional solutions. It is shown that the singularity of the velocity is a logarithmic one, and its precise shape is obtained.

The exact solution is finally used to derive a Godunov method according to the \emph{reconstruction}--\emph{evolution}--\emph{average} strategy. This demonstrates that the exact solution formulae, despite their complexity, can be efficiently used to derive numerical schemes, of which the Godunov scheme is only one example. {A similar approach has been followed in \cite{brio01}. Here, the Godunov scheme is derived using the framework of distributional solutions, as the initial data of a Riemann Problem are discontinuous. Focusing on the acoustic equations allows to study the effects of an exact \emph{evolution} step.

In order to reduce the work necessary for the derivation of the Godunov scheme, it is first shown that for linear systems, evolution and average can be interchanged. The new strategy \emph{reconstruction}--\emph{average}--\emph{evolution} leads to the same scheme, but shortens the derivation considerably. A piecewise constant reconstruction is thus found to be equivalent to a staggered bilinear reconstruction endowed with a different interpretation of the discrete degree of freedom. This should not, however, be confused with bilinear reconstructions aimed at deriving schemes of second order.}

The paper is organized as follows. After deriving the equations of linear acoustics in Section \ref{sec:equations}, an exact solution in three spatial dimensions is presented in Section \ref{sec:solution}. The solution operator needs to allow for discontinuous initial data. It is shown that the natural class in multiple spatial dimensions are distributional solutions. A brief review of distributions is given in the Appendix in Section \ref{ssec:distributions}, followed by a detailed derivation in Section \ref{sec:distributionderivation} of the Appendix. The properties of the solution operator are discussed in Section \ref{ssec:properties}, as it has a number of striking differences to its one-dimensional counterpart. Section \ref{sec:riemann} exemplifies the formulae on a two-dimensional Riemann Problem and in Section \ref{sec:godunov} a Godunov method is derived according to the \emph{reconstruction}--\emph{evolution}--\emph{average} strategy. Numerical examples are shown in Section \ref{ssec:numericalresults}. The ability of the method to resolve the low Mach number limit is studied both theoretically and experimentally there as well.

\section{Acoustic equations} \label{sec:equations}

\subsection{Linearization of the Euler equations} \label{ssec:equationslinearization}

The acoustic equations are obtained as linearizations of the Euler equations. These latter govern the motion of an ideal compressible fluid. In $d$ spatial dimensions, the state of the fluid is given by specifying a density $\rho: \mathbb R^+_0 \times \mathbb R^d \to \mathbb R^+$ and a velocity $\vec v: \mathbb R^+_0 \times \mathbb R^d \to \mathbb R^d$. Additionally, the pressure $p$ of the fluid is needed to close the system. Its role depends on the precise model of the fluid motion.

Consider the isentropic Euler equations
\begin{align*}
 \del_t \rho + \div (\rho \vec v) &= 0\\
 \del_t (\rho \vec v) + \div (\rho \vec v \otimes \vec v + p \id) &= 0
\end{align*}
Here the pressure is a function of the density and is taken as $p(\rho) = K \rho^\gamma$ ($K>1$, $\gamma \geq1$). Linearization around the state $(\rho, \vec v) = (\bar \rho, 0)$ yields
\begin{align}
 \del_t \rho + \bar \rho \, \div \vec v &= 0 \label{eq:lineuler1rho}\\
 \del_t \vec v + c^2 \frac{\grad \rho}{\bar \rho} &= 0 \label{eq:lineuler1p}
\end{align}
where one defines $c = \sqrt{p'(\bar \rho)}$. Linearization with respect to a fluid state moving at some constant speed $\vec U$ can be easily removed or added via a Galilei transform.

The same system can be obtained from the Euler equations endowed with an energy equation
\begin{align*}
 \del_t \rho + \div (\rho \vec v) &= 0\\
 \del_t (\rho \vec v) + \div (\rho \vec v \otimes \vec v + p \id) &= 0\\
 \del_t e + \div(\vec v(e+p)) &= 0 
\end{align*}
with the total energy density $e = \frac{p}{\gamma-1} + \frac12 \rho |\vec v|^2$, $\gamma > 1$ which closes the system. Linearization around $(\rho, \vec v, p) = (\bar \rho, 0, \bar p)$ yields
\begin{align}
 \del_t \rho + \bar \rho\, \div \vec v &= 0 \label{eq:lineuler2rho}\\
 \del_t \vec v + \frac{\grad p}{\bar \rho} &= 0 \label{eq:lineuler2v}\\
 \del_t p + \bar \rho c^2 \,\div \vec  v &= 0 \label{eq:lineuler2p}
\end{align}
Equations \eqref{eq:lineuler2v}--\eqref{eq:lineuler2p} are (up to rescaling and renaming) the same as \eqref{eq:lineuler1rho}--\eqref{eq:lineuler1p}. Both can be linearly transformed to the symmetric version
\begin{align}
 \del_t \vec v + c \,\grad p &= 0 \label{eq:acousticv}\\
 \del_t p + c\,\div \vec v &= 0 \label{eq:acousticp}
\end{align}
$p$ will be called pressure and $\vec v$ the velocity -- just to have names. Due to the different linearizations and the symmetrization they are not exactly the physical pressure or velocity any more, but still closely related. These equations describe the time evolution of small perturbations to a constant state of the fluid.

It is to be noted that system \eqref{eq:acousticv}--\eqref{eq:acousticp} does not in general reduce to the usual wave equation, and thus does not admit the usual Kirchhoff solution (\cite{evans98}). The equation for the scalar $p$ is indeed the usual scalar wave equation
\begin{align}
 \del_t^2 p - c^2 \Delta p &= 0 \label{eq:acousticwavep}
\end{align}
but $\vec v$ fulfills
\begin{align}
 \del_t^2 \vec v - c^2 \grad \div \vec v &= 0 \label{eq:acousticwavev}
\end{align}
The identity $\nabla \times ( \nabla \times \vec v) = \nabla(\nabla \cdot v) - \Delta \vec v$ links this operator to the vector Laplacian in 3-d. By \eqref{eq:acousticv} 
\begin{align}
\del_t (\nabla \times \vec v) = 0 \label{eq:vorticity}
\end{align}
but $\nabla \times \vec v$ needs not be zero initially. Equation \eqref{eq:acousticwavev} cannot be split into scalar wave equations for the components. This is why the solution to linear acoustics is more complicated than that of a scalar wave equation. Equation \eqref{eq:acousticwavev} so far has not been given much attention in the literature. However, its behaviour differs from that of the scalar wave equation, which manifests itself, for example, in the occurrence of a vorticity singularity in the solution of a multi-dimensional Riemann Problem (as observed in \cite{amadori2015}). This is a feature of the particular vector wave equation \eqref{eq:acousticwavev} only. This singularity is studied in more detail in Section \ref{sec:riemann}.
 
\subsection{Low Mach number limit}

 The system \eqref{eq:acousticv}--\eqref{eq:acousticp} has a low Mach number limit just as the Euler equations (compare \cite{dellacherierieper10} and for the Euler case see e.g. \cite{klainerman81,klein95,metivier01}). One introduces a small parameter $\epsilon$ and inserts the scaling $\epsilon^{-2}$ in front of the pressure gradient in \eqref{eq:lineuler2v} such that the system and its symmetrized version read, respectively,
\begin{align}
 \del_t \vec v + \frac{1}{\epsilon^2} \,\grad p &= 0 \label{eq:acousticscaledv}\\
 \del_t p + c^2\,\div \vec v &= 0 \label{eq:acousticscaledp}
\end{align}
and
\begin{align}
 \del_t \vec v + \frac{c}{\epsilon} \,\grad p &= 0 \label{eq:acousticscaledvsym}\\
 \del_t p + \frac{c}{\epsilon}\,\div \vec v &= 0 \label{eq:acousticscaledpsym}
\end{align}
 In one spatial dimension the transformation which symmetrizes the Jacobian $J = \left( \begin{array}{cc}  0 & \frac{1}{\epsilon^2}\\ c^2 & 0 \end{array}\right )$ is
 \begin{align}
  S = \left( \begin{array}{cc}  1 & \\& c \epsilon   \end{array} \right ) \label{eq:trafomatrix}
 \end{align}
 such that $J = S \left( \begin{array}{cc} 0&\frac{c}{\epsilon}\\ \frac{c}{\epsilon}&0  \end{array}\right ) S^{-1}$. In multiple spatial dimensions the upper left entry in $S$ has to be replaced by an appropriate block-identity-matrix. In the following preference is given to the symmetric version if nothing else is stated. Regarding the low Mach number limit the non-symmetrized version is more natural and will be reintroduced for studying the low Mach number properties of the scheme. The variables of the two systems are given the same names to simplify notation.

\section{Exact solution}
\label{sec:solution}

Consider the Cauchy problem for the multi-dimensional linear hyperbolic system
\begin{align}
 \del_t q + (\vec J \cdot \nabla) q &= 0 \label{eq:system} & q : \mathbb R^+_0 \times \mathbb R^d \to \mathbb R^m\\
 q(0, \vec x) &= q_0(\vec x)
\end{align}
where $\vec x \in \mathbb R^d$ and $\vec J$ is the vector of the Jacobians into the different directions\footnote{Only vectors with $d$ components are typeset in boldface letters. Indices never denote derivatives.} {:
\begin{align*}
 (\vec J \cdot \nabla) q = J_x \del_x q + J_y \del_y q + J_z \del_z q
\end{align*}
}  and $m$ is the size of the system. For the symmetrized system \eqref{eq:acousticv}--\eqref{eq:acousticp} in 3-d one has $q := (\vec v, p)$ such that $m=d+1$ and 
\begin{align}
 \vec J = \left( \left( \begin{array}{cccc} 0\,\, &&&c\,\,\\&0\,\,\\&&0\,\,\\c\,\, &&&0\,\,\end{array} \right) ,
      \left( \begin{array}{cccc} 0\,\,\\&0\,\,&&c\,\,\\&&0\,\,\\&c\,\, &&0\,\, \end{array} \right),
     \left( \begin{array}{cccc} 0\,\,\\&0\,\,\\&&0\,\,&c\,\,\\&&c\,\,&0\,\,\end{array} \right)
     \right ) \label{eq:acousticjacobians}
\end{align}

The solution formula for \eqref{eq:acousticv}--\eqref{eq:acousticp} turns out to contain \emph{derivatives} of the initial data (Section \ref{ssec:solution}). This makes it necessary to consider \emph{distributional solutions}, in order to be able to differentiate a jump. A brief review of distributions is given in Section \ref{ssec:distributions} of the Appendix. This is needed in order to fix the notation used. For the sake of better readability, the derivation of the distributional solution of linear acoustics is performed in Section \ref{sec:distributionderivation} of the Appendix, whereas Section \ref{ssec:solution} only states the resulting solution formulae.

\subsection{Solution formulae for the multi-dimensional case} \label{ssec:solution}

The following notation is used throughout: Choosing $r \in \mathbb R^+$ and $d \in \mathbb N^+$ the $d$-ball of radius $r$ is denoted by
 $B^{d}_r := \{ \vec x \in \mathbb R^d : |\vec x| \leq r \}$ and let the sphere $S_r^{d-1}$ denote its boundary.

\begin{definition}[Evolution operator] \label{def:evolutionoperator}
The \emph{evolution operator} $T_t$ maps suitable initial data $q_0(\vec x)$ to the solution of the corresponding Cauchy problem for \eqref{eq:system} (that is assumed to exist and be unique) at time $t$:
\begin{align*}(T_t\, q_0)(t, \vec x) = q(t, \vec x)\end{align*}
\end{definition}
Obviously $T_0 = \textrm{id}$.

{The distributional solution cannot be stated prior to introducing corresponding notation. Therefore the full Theorem is stated as Theorem \ref{thm:solutionappendix} and proven in the Appendix. Below only the case of smooth initial data is shown to simplify the presentation.}

\begin{theorem}[Solution formulae] \label{thm:solution}
 If {$p_0 \in C^2 \cap L^\infty(\mathbb R^3)$ and $\vec v_0$ a $C^2 \cap L^\infty$ vector field}, then 
 \begin{align}
   p(t, \vec x) &= p_0(\vec x) + \int_0^{ct} \dd r \, r \frac{1}{4\pi} \int_{S^2_1} \dd {\vec y} \,\,(\div \grad p_0)(\vec x + r \vec y) - ct \frac{1}{4\pi}\int_{S^2_1} \dd {\vec y}\,\, \div \vec v_0 (\vec x + ct \vec y)\label{eq:phelmholtzfunc}\\
 \vec v(t, \vec x) &= \vec v_0(\vec x) + \int_0^{ct} \dd r \, r \frac{1}{4\pi}\int_{S^2_1} \dd {\vec y} \,\, (\grad \div \vec v_0)(\vec x + r \vec y) - ct \frac{1}{4\pi}\int_{S^2_1} \dd {\vec y} \,\, (\grad p_0)(\vec x + ct \vec y)\label{eq:vhelmholtzfunc}
  \end{align}
  is solution to
  \begin{align*}
    \del_t q + \vec J \cdot \nabla q &= 0 & p(0, \vec x) &= p_0(\vec x) & \vec v(0, \vec x) &= \vec v_0(\vec x)
  \end{align*}
  with $\vec J$ given by \eqref{eq:acousticjacobians}, and with $d=3$.
\end{theorem}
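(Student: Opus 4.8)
Since the data are smooth and bounded, I would prove the statement by direct verification: substitute the right-hand sides of \eqref{eq:phelmholtzfunc}--\eqref{eq:vhelmholtzfunc} into the first-order system \eqref{eq:acousticv}--\eqref{eq:acousticp} and check the initial conditions; the existence/uniqueness assumption built into Definition \ref{def:evolutionoperator} then identifies them with $T_t$. Throughout I would abbreviate the spherical mean by
\[
M_g(\vec x, r) := \frac{1}{4\pi}\int_{S^2_1} \dd\vec y\, g(\vec x + r\vec y),
\]
applied componentwise to vector fields. Two elementary facts drive everything. First, any constant-coefficient spatial operator commutes with the averaging, so $\grad$, $\div$ and $\Delta = \div\grad$ may be pulled through $M$, e.g. $\div_{\vec x} M_{\grad p_0} = M_{\Delta p_0}$ and $\grad_{\vec x} M_{\div\vec v_0} = M_{\grad\div\vec v_0}$. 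Second, in three dimensions the spherical mean obeys the Darboux identity $\Delta_{\vec x} M_g = (\del_r^2 + \tfrac{2}{r}\del_r) M_g$, which I would rewrite in the telescoping form $r\,\Delta_{\vec x} M_g = \del_r\big(r\,\del_r M_g + M_g\big)$.

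\textbf{Initial data.}
Setting $t=0$ and using $M_g(\vec x,0)=g(\vec x)$ gives $p(0)=p_0$ and $\vec v(0)=\vec v_0$ immediately. Differentiating once in $t$ (with the Leibniz rule for the variable upper limit $ct$) and evaluating at $t=0$, the $r$-integral terms and the $\del_r$ contributions vanish, leaving $\del_t p(0) = -c\,M_{\div\vec v_0}(\vec x,0) = -c\,\div\vec v_0$ and $\del_t\vec v(0) = -c\,\grad p_0$, exactly the values dictated by \eqref{eq:acousticv}--\eqref{eq:acousticp}.

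\textbf{Evolution.}
To check $\del_t p + c\,\div\vec v = 0$ I would differentiate \eqref{eq:phelmholtzfunc} in $t$ and compute $c\,\div_{\vec x}$ of \eqref{eq:vhelmholtzfunc}, commuting $\div$ through every $M$. The two terms proportional to $M_{\Delta p_0}(\vec x, ct)$ cancel, and what remains is, with $s:=\div\vec v_0$,
\[
c\,s(\vec x) - c\,M_s(\vec x,ct) - c^2 t\,\del_r M_s(\vec x,ct) + c\int_0^{ct}\!\dd r\, r\, M_{\Delta s}(\vec x,r).
\]
Here the Darboux identity makes the integral telescope, $\int_0^{ct} r\,M_{\Delta s}\,\dd r = ct\,\del_r M_s(\vec x,ct) + M_s(\vec x,ct) - s(\vec x)$, and all four terms cancel. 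The check of $\del_t\vec v + c\,\grad p = 0$ is the identical computation with the scalar $s$ replaced by the vector field $\grad p_0$ (using $\grad\Delta p_0 = \Delta\grad p_0$ and the cancellation of the $M_{\grad\div\vec v_0}(\vec x,ct)$ terms), so a single scalar identity settles both equations.

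\textbf{Main obstacle.}
The delicate point is the velocity. Because $\vec v$ satisfies the vector equation \eqref{eq:acousticwavev}, $\del_t^2\vec v - c^2\grad\div\vec v = 0$, which does \emph{not} decouple into componentwise scalar wave equations \eqref{eq:acousticwavep}, one cannot simply quote the Kirchhoff formula for each component. The direct verification sidesteps this: the Darboux identity is scalar and applies verbatim to each Cartesian component of the vector integrands $\grad p_0$ and $\grad\div\vec v_0$, precisely because the differential operators commute with $M$. Structurally, the reason the formula has the shape it does is that $\del_t\vec v = -c\,\grad p$ is an exact gradient, so only the irrotational part of $\vec v$ evolves while the vorticity stays frozen by \eqref{eq:vorticity}; as an alternative derivation I would mention solving the scalar wave equation \eqref{eq:acousticwavep} for $p$ by Kirchhoff and then integrating $\del_t\vec v=-c\,\grad p$ in time, which reproduces \eqref{eq:vhelmholtzfunc} but is more laborious than the verification above.
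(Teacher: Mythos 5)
Your route is genuinely different from the paper's. You verify the candidate formulae directly, using the commutation of constant-coefficient operators with the spherical mean $M_g$ and the Euler--Poisson--Darboux identity in telescoped form; the paper instead \emph{derives} the formulae: it Fourier-transforms the system, diagonalizes $\vec J\cdot\vec k$ so each mode evolves by $\exp(-\ii\omega_n t)$ with $\omega_n\in\{0,\pm c|\vec k|\}$, recognizes the resulting multipliers $\cos(c|\vec k|t)$, $\sin(c|\vec k|t)/|\vec k|$ and $(\cos(c|\vec k|t)-1)/|\vec k|^2$ as Fourier transforms of the radial distributions $\delta_{|\vec x|=ct}$ and $\Theta_{|\vec x|\leq ct}/|\vec x|$ (Theorem \ref{thm:fourierradial}), converts the products into convolutions (Theorem \ref{thm:solutionappendix}), and only then specializes to spherical means for regular data (Corollary \ref{cor:helmholtzappendix}). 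In the paper the PDE is satisfied essentially by construction, mode by mode, and the whole argument lives in $S'$, so it covers the discontinuous Riemann data needed later in the paper; your verification is more elementary and self-contained, and your telescoped identity $\int_0^{ct}\dd r\, r\, M_{\Delta s}(\vec x,r)=ct\,\del_r M_s(\vec x,ct)+M_s(\vec x,ct)-s(\vec x)$ is exactly the paper's Equation \eqref{eq:derivativetransfer}; your cancellation bookkeeping is correct.

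There is, however, a regularity gap relative to the stated hypothesis $p_0,\vec v_0\in C^2\cap L^\infty$. Two of your commutations place \emph{three} derivatives on the data: computing $c\,\div_{\vec x}$ of \eqref{eq:vhelmholtzfunc} you turn $\div_{\vec x}\int_0^{ct}\dd r\, r\, M_{\grad\div\vec v_0}$ into $\int_0^{ct}\dd r\, r\, M_{\Delta s}$ with $s=\div\vec v_0$, and the velocity check invokes $\grad\Delta p_0=\Delta\grad p_0$. For $C^2$ data neither $\Delta\div\vec v_0$ nor $\grad\Delta p_0$ need exist pointwise, so the step ``pull the operator through $M$'' literally fails there --- the same phenomenon as in Kirchhoff's formula, where classical verification of the wave equation costs one extra derivative of the data. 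The conclusion of your telescoping (which involves only two derivatives of the data) is still true, but its justification at $C^2$ regularity requires either (a) proving the statement for mollified data and passing to the locally uniform limit, in which case the limiting equation holds in the distributional sense --- which is in fact the sense in which the paper states and proves the result --- or (b) an integration by parts on the ball/sphere (Gauss' theorem, as in the paper's remark following Corollary \ref{cor:solutionradial} and in Lemma \ref{lem:differentiationandgauss}) so that no more than two derivatives ever land on the data. As written, your argument proves the theorem for $C^3$ data; one of these two supplements is needed to cover the stated $C^2$ case.
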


{The proof of this Theorem is given in Section \ref{sec:distributionderivation} of the Appendix (Theorem \ref{thm:solutionappendix} and Corollary \ref{cor:helmholtzappendix} therein). One observes the appearance of a \emph{spherical average}
\begin{align*}
 \frac{1}{4\pi}\int_{S^2_1} \dd {\vec y} \,\, f(\vec x + r \vec y)
\end{align*}
of a function $f(\vec x)$. The distributional analogue can be found in Definition \ref{def:sphericalaverage}. }

Spherical means appear already in the study of the scalar wave equation (see e.g. \cite{john78,evans98}). As has been discussed in Section \ref{ssec:equationslinearization}, only the evolution of the scalar variable $p$ is governed by a scalar wave equation. The equation for $v$ is a vector wave equation. However, usage of the Helmholtz decomposition allows to write down a scalar wave equation for the \emph{curl-free part} of $\vec v$, whereas the time evolution of the curl is given by $\del_t (\nabla \times \vec v ) = 0$. The solution to the scalar wave equation can be used and the Helmholtz decomposition of the two parts of the velocity conveniently reassembles into \eqref{eq:phelmholtzfunc}--\eqref{eq:vhelmholtzfunc}. The above formulae appear without proof in \cite{eymann13} where they have been obtained by this analogy with the scalar wave equation. A similar approach is taken in \cite{franck17}, again assuming that the solution is smooth enough. It is important to note however, that the initial data onto which the solution formulae are applied in \cite{eymann13} are not sufficiently well-behaved for the second derivatives to exist, such that a justification in the sense of distributions was needed. This is now accomplished in Theorem \ref{thm:solutionappendix}. {However, the notational overhead of distribution theory might obscure interesting properties of the solution operator that are discussed below. Although the formulae (upon reinterpretation) remain valid for the distributional solution, the properties are presented using Theorem \ref{thm:solution}. The distributional solution is presented as a self-contained Section \ref{sec:distributionderivation}.}

From the solution in Equation \eqref{eq:vhelmholtzfunc} one observes that $\vec v$ changes in time by a gradient of a function. Applying the curl, this gradient vanishes -- indeed, the curl must be stationary due to Eq. \eqref{eq:vorticity}.

The spatial derivatives that appear in the solution formulae \eqref{eq:phelmholtzfunc}--\eqref{eq:vhelmholtzfunc} can be transformed into derivatives with respect to $r$ only. The new formulae are more useful in certain situations (as will be seen later), and display interesting properties of the solution that are discussed after stating the Theorem.

\begin{corollary}[Solution formulae with radial derivatives only]  \label{cor:solutionradial}

  Consider the setup of Theorem \ref{thm:solution}. {For $C^2 \cap L^\infty$ initial data $p_0, \vec v_0$} the solution \eqref{eq:phelmholtzfunc}--\eqref{eq:vhelmholtzfunc} can be rewritten as
\begin{align}
 p(t, \vec x) &= \del_r \left(r \frac{1}{4\pi}\int_{S^2_1} \dd {\vec y} \,    p_0 \right) - \frac{1}{r} \del_r \left(r^2 \frac{1}{4\pi}\int_{S^2_1} \dd {\vec y} \,  \vec y \cdot \vec v_0\right ) \label{eq:preformulfunc}\\
 \vec v(t, \vec x) &= \frac23 \vec{ v}_0(\vec x) - \frac{1}{r} \del_r \left( r^2 \frac{1}{4\pi}\int_{S^2_1} \dd {\vec y} \, p_0  \vec y \right) + \del_r\left(r\frac{1}{4\pi}\int_{S^2_1} \dd {\vec y} \,  (\vec v_0 \cdot \vec y)  \vec y \right ) \nonumber \\&- \frac{1}{4\pi}\int_{S^2_1} \dd {\vec y}\,  \left[  \vec v_0 - 3  (\vec v_0 \cdot \vec y)  \vec y   \right ] - \int_0^{ct} \dd r \frac{1}{r} \frac{1}{4\pi}\int_{S^2_1} \dd {\vec y}\,  \left[  \vec v_0 - 3  (\vec v_0 \cdot \vec y)  \vec y   \right ] \label{eq:vsolution23func}
\end{align}
and Equation \eqref{eq:vsolution23func} is equivalent to
\begin{align}
 \vec v(t, \vec x) = \vec v_0(\vec x) &- \frac{1}{r}\del_r \left( r^2 \frac{1}{4\pi}\int_{S^2_1} \dd {\vec y} \,  p_0  \vec y \right) \nonumber\\&+ \int_0^{ct} \dd r \frac{1}{r} \del_r \left[ \frac{1}{r} \del_r \left( r^3 \frac{1}{4\pi}\int_{S^2_1} \dd {\vec y}   (\vec v_0 \cdot \vec y) \vec y \right )  -  r \frac{1}{4\pi}\int_{S^2_1} \dd {\vec y} \vec v_0   \right ] \label{eq:ureformulfunc}
\end{align}

\end{corollary}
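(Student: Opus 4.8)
The plan is to turn every spatial derivative in \eqref{eq:phelmholtzfunc}--\eqref{eq:vhelmholtzfunc} into a derivative in the radial variable $r$ (evaluated at $r=ct$ at the end) acting on spherical averages of the data. Writing $M[f]$ for the spherical average $\frac{1}{4\pi}\int_{S^2_1}\dd{\vec y}\, f(\vec x+r\vec y)$, regarded as a function of $\vec x$ and $r$, I would first assemble two elementary tools. The first is the Euler--Poisson--Darboux relation $\del_r^2(r\,M[f])=r\,M[\Delta f]$ for scalar $f$, together with the fact that any constant-coefficient differential operator commutes with $M[\cdot]$ by translation invariance, so $M[\del^\alpha f]=\del^\alpha_{\vec x}M[f]$. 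The second is the divergence theorem on the ball $B^3_r$, which converts a surface integral carrying one factor of $\vec y$ into a volume integral of a derivative; this gives $\del_r\!\big(r^2M[\vec y\cdot\vec v_0]\big)=r^2M[\div\vec v_0]$ and, in its gradient form $\int_{\del\Omega}f\,\vec n=\int_\Omega\grad f$, the companion $\del_r\!\big(r^2M[p_0\vec y]\big)=r^2M[\grad p_0]$.

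With these the pressure formula \eqref{eq:preformulfunc} is immediate. By the fundamental theorem of calculus in $r$ and the Euler--Poisson--Darboux relation,
\begin{align*}
 \del_r\!\big(r\,M[p_0]\big)\big|_{r=ct}=p_0(\vec x)+\int_0^{ct}\dd r\,\del_r^2\!\big(r\,M[p_0]\big)=p_0(\vec x)+\int_0^{ct}\dd r\,r\,M[\Delta p_0],
\end{align*}
which reproduces the first two terms of \eqref{eq:phelmholtzfunc} since $\div\grad=\Delta$; the third term is $-\frac1r\del_r\!\big(r^2M[\vec y\cdot\vec v_0]\big)\big|_{r=ct}=-ct\,M[\div\vec v_0]$. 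The same gradient identity handles the pressure contribution $-ct\,M[\grad p_0]$ to the velocity, turning it into $-\frac1r\del_r\!\big(r^2M[p_0\vec y]\big)$, which is the first integrated term in both \eqref{eq:vsolution23func} and \eqref{eq:ureformulfunc}.

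The genuine work is the remaining velocity term $\int_0^{ct}\dd r\,r\,M[\grad\div\vec v_0]$, where two spatial derivatives must be absorbed into radial derivatives of the tensorial averages $M[\vec v_0]$ and $M[(\vec v_0\cdot\vec y)\vec y]$. Here the scalar Euler--Poisson--Darboux relation is not enough, precisely because \eqref{eq:acousticwavev} does not decouple into scalar wave equations. The key identity I would isolate and prove is
\begin{align*}
 \tfrac1r\del_r\!\big(r^3\,M[(\vec v_0\cdot\vec y)\vec y]\big)-r\,M[\vec v_0]=r^2\,M[(\div\vec v_0)\,\vec y],
\end{align*}
which I would establish from the divergence theorem on the closed surface $S^2_1$ in the contracted form $\int_{S^2_1}\big[\div_{\vec y}\vec G-y_ay_b\,\del_{y_b}G_a\big]\dd{\vec y}=2\int_{S^2_1}\vec G\cdot\vec y\,\dd{\vec y}$ (the factor $2$ being the mean curvature $\div_{S^2_1}\vec y=2$), applied to the auxiliary field $G_a(\vec y)=v_{0,a}(\vec x+r\vec y)\,y_i$ with $i$ fixed. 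Computing $\div_{\vec y}\vec G=r(\div\vec v_0)y_i+v_{0,i}$ and $y_ay_b\del_{y_b}G_a=r(\vec y\cdot\nabla v_{0,a})y_ay_i+v_{0,a}y_ay_i$, and recognizing $M[(\vec y\cdot\nabla v_{0,a})y_ay_i]=\del_r M[(\vec v_0\cdot\vec y)y_i]$, the surface theorem collapses these into the stated identity. The coefficient $3$ arises as $2+1$, from the mean curvature together with the $\delta_{ai}$ produced in differentiating the factor $y_i$. Combined with $\del_r\!\big(r^2M[(\div\vec v_0)\vec y]\big)=r^2M[\grad\div\vec v_0]$ this yields $r\,M[\grad\div\vec v_0]=\frac1r\del_r\big[\frac1r\del_r(r^3M[(\vec v_0\cdot\vec y)\vec y])-r\,M[\vec v_0]\big]$, and substitution into \eqref{eq:vhelmholtzfunc} produces exactly \eqref{eq:ureformulfunc}. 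I expect this identity to be the main obstacle: getting the tensor bookkeeping right so that a single surface-divergence computation delivers the precise factor that later becomes the $\tfrac23$.

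Finally, to pass from \eqref{eq:ureformulfunc} to \eqref{eq:vsolution23func} I would integrate the $r$-integral by parts. With $A=M[\vec v_0]$, $B=M[(\vec v_0\cdot\vec y)\vec y]$ and $\Psi=\frac1r\del_r(r^3B)-rA$, one has $\int_0^{ct}\frac1r\del_r\Psi\,\dd r=[\Psi/r]_0^{ct}+\int_0^{ct}\Psi/r^2\,\dd r$, where $\Psi/r=3B+r\del_r B-A$. Evaluating the boundary term uses the limits $A(0)=\vec v_0(\vec x)$ and $B(0)=\tfrac13\vec v_0(\vec x)$, the latter from $\int_{S^2_1}\vec y\otimes\vec y\,\dd{\vec y}=\tfrac{4\pi}{3}\,\id$; these make the $r=0$ endpoint vanish and, reorganizing the remaining pieces (splitting $\Psi/r^2=\frac{3B-A}{r}+\del_r B$), generate the $\tfrac23\vec v_0$ prefactor together with the averages $M[\vec v_0-3(\vec v_0\cdot\vec y)\vec y]=A-3B$ appearing in \eqref{eq:vsolution23func}. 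Checking that each bracket vanishes at least linearly as $r\to0$ guarantees integrability near the origin and completes the equivalence.
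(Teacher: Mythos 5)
Your proposal is correct: every identity checks out (I verified the key tensor identity $\tfrac1r\del_r\bigl(r^3M[(\vec v_0\cdot\vec y)\vec y]\bigr)-rM[\vec v_0]=r^2M[(\div\vec v_0)\,\vec y]$ both by the surface-divergence computation you give and on test fields), the boundary terms at $r=0$ vanish exactly as you claim via $\int_{S^2_1}\vec y\otimes\vec y\,\dd\vec y=\tfrac{4\pi}{3}\id$, and the integration by parts legitimately converts \eqref{eq:ureformulfunc} into \eqref{eq:vsolution23func}. Your treatment of the pressure equation and of the $\grad p_0$ term is literally the paper's own argument: the Euler--Poisson--Darboux relation you invoke is exactly identity \eqref{eq:derivativetransfer}, derived there by the same combination of radial-derivative transfer and Gauss' theorem on the ball. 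Where you genuinely diverge is the velocity term, which the paper dismisses with ``in a similar but slightly more complicated way'' and only carries out in the distributional Appendix (Theorem \ref{thm:solutionradialappendix}), using convolutions with $\delta_{|\vec x|=r}$ and $\Theta_{|\vec x|\leq r}$ (Lemma \ref{lem:differentiationandgauss}) and the auxiliary distributions $\Sigma_{ij}$, $\sigma_{ij}$. You instead isolate a clean intermediate identity and prove it with the \emph{surface} divergence theorem on the closed sphere $S^2_1$, where the mean-curvature factor $2$ plus the $\delta_{ai}$ from differentiating $y_i$ produce the coefficient $3$; the paper's route produces the analogous cancellation through Equation \eqref{eq:concellationtensornomral} inside the proof of Theorem \ref{def:sigmaij}. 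Your version buys a self-contained, fully explicit proof for $C^2$ data in which the constants ($3$, and hence the $\tfrac23$ prefactor) are transparent, and it supplies the \eqref{eq:vsolution23func}$\Leftrightarrow$\eqref{eq:ureformulfunc} equivalence that the paper omits; the paper's heavier distributional machinery buys validity for merely $L^1_\text{loc}\cap L^\infty$ (e.g.\ Riemann) data, which is needed later in Section \ref{sec:riemann} but not for the Corollary as stated, so nothing is missing from your argument.
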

{\sl Note}: Everything (if not stated explicitly) is understood to be evaluated at $\vec x + r \vec y$, and wherever $r$ remains, $r=ct$ is to be taken at the very end. 
For example, the term $\del_r \left(r \frac{1}{4\pi}\int_{S^2_1} \dd {\vec y} \,    p_0 \right)$ appearing in \eqref{eq:preformulfunc}, if written explicitly, reads
\begin{align*}
 \left. \del_r \left(r \frac{1}{4\pi}\int_{S^2_1} \dd {\vec y} \,    p_0(\vec x + r \vec y) \right) \right |_{r=ct}
\end{align*}

This Corollary follows from its distributional counterpart Theorem \ref{thm:solutionradialappendix}, proven in the Appendix. Therein it is in particular shown that the integral
\begin{align*}
 \int_0^{ct} \dd r \frac{1}{r} \int_{S^2_1} \dd {\vec y}\,  \left[  \vec v_0 - 3  (\vec v_0 \cdot \vec y)  \vec y   \right ]
\end{align*}
is finite for continuous $\vec v_0$. {For $C^2$ data \eqref{eq:preformulfunc}--\eqref{eq:ureformulfunc} can be shown to follow directly from \eqref{eq:phelmholtzfunc}--\eqref{eq:vhelmholtzfunc} by Gauss' theorem for the sphere of radius $r$. For example, differentiating
\begin{align*}
 \del_r \left(r \frac{1}{4\pi}\int_{S_1^2} \dd \vec y \,    p_0 (\vec x + r\vec y) \right)
\end{align*}
with respect to $r$ yields
\begin{align*}
 \del_r^2 \left(r \frac{1}{4\pi}\int_{S_1^2} \dd \vec y \,     p_0(\vec x + r\vec y) \right) &= \frac{1}{r} \del_r \left( r^2 \del_r \frac{1}{4\pi}\int_{S_1^2} \dd \vec y \,  p_0(\vec x + r\vec y) \right )
\intertext{by elementary manipulations. Differentiation with respect to $r$ can be replaced by $\vec y \cdot \nabla$ inside the spherical mean:}
 \frac{1}{r} \del_r \left( r^2 \del_r \frac{1}{4\pi}\int_{S_1^2} \dd \vec y \,  p_0(\vec x + r\vec y) \right )&= \frac{1}{r} \del_r \left( r^2 \frac{1}{4\pi}\int_{S_1^2} \dd \vec y \,  \vec y \cdot \nabla p_0(\vec x + r\vec y) \right )\\
\intertext{and by Gauss theorem}
 &= \frac{1}{r} \del_r \int_0^r \dd r' \left( r'^2 \frac{1}{4\pi}\int_{S_1^2} \dd \vec y \,  \nabla \cdot \nabla p_0(\vec x + r'\vec y) \right )\\ 
 &= r \frac{1}{4\pi}\int_{S_1^2} \dd \vec y \,  \nabla \cdot \nabla p_0(\vec x + r'\vec y) 
\end{align*}
Integrating over $r$, and evaluating at $r=ct$ yields the sought identity
\begin{align}
 \left. \del_r \left( r \frac{1}{4\pi}\int_{S_1^2} \dd \vec y \,   p_0 (\vec x + r\vec y) \right ) \right |_{r=ct} &= p_0(\vec x) + \int_0^{ct} \dd r \,r \frac{1}{4\pi}\int_{S_1^2} \dd \vec y \,  \nabla \cdot \nabla p_0(\vec x + r\vec y)  \label{eq:derivativetransfer}
\end{align}
In a similar but slightly more complicated way the equivalence of the other terms can be shown.

}

\subsection{Properties of the solution} \label{ssec:properties}

There is a number of striking differences to the one-dimensional case that appear in multiple spatial dimensions. 

For the one-dimensional problem only the values of the initial data appear in the solution formulae, not their derivatives. This is different in multiple dimensions and can already be observed for the scalar wave equation (as discussed e.g. in \cite{evans98}). A similar statement is true for the solution to Equations \eqref{eq:acousticv}--\eqref{eq:acousticp}. (As explained in Section \ref{ssec:equationslinearization} this system cannot be reduced to scalar wave equations.) \eqref{eq:phelmholtzfunc}--\eqref{eq:vhelmholtzfunc} make the impression that second derivatives of the initial data need to be computed, but Corollary \ref{cor:solutionradial} states that the solution can be rewritten into Equation \eqref{eq:vsolution23func}, which involves only first spatial derivatives.

In one spatial dimension, the solution at a point $x$ depends only on initial data at points $y$ for which $|y-x|= ct$. This motivates the following (compare e.g. \cite{oneill1983}, Chapter 14)
\begin{definition}[Causal structure]
 Let $(t, \vec x) \in \mathbb R^+_0 \times \mathbb R^d$. The restriction of the initial data onto the set 
 $$\mathscr T_{(t, \vec x)}:=\{ \vec y : |\vec x - \vec y| < ct \}$$ is called \emph{timelike initial data} for $(t,\vec x)$. The restriction of the initial data onto the set 
 $$\mathscr N_{(t, \vec x)}:=\{ \vec y : |\vec x - \vec y| = ct \}$$ is called \emph{null initial data} for $(t,\vec x)$.
\end{definition}

In one spatial dimension the solution to the acoustic equations depends on null initial data only. In multiple spatial dimensions the situation is more complicated. The solution of just the scalar wave equation \eqref{eq:acousticwavep} depends on null initial data for odd dimensions $d=1, 3, 5, \ldots$, whereas in even spatial dimensions $d=2, 4, \ldots$ it also depends on timelike initial data (see e.g. \cite{john78,evans98}). For the acoustic system \eqref{eq:acousticv}--\eqref{eq:acousticp}, which involves a scalar as well as a vector wave equation \eqref{eq:acousticwavev}, the solution depends on timelike initial data \emph{both} in odd and even number of spatial dimensions. 

\subsection{Example of a singularity in the two-dimensional Riemann problem}
\label{sec:riemann}

In this Section, the exact solution is applied to a two-dimensional Riemann problem. {As the solution formulae are applied to discontinuous initial data, here Theorem \ref{thm:solution} and {Corollary} \ref{cor:solutionradial} are not sufficient, and the distributional versions \ref{thm:solutionappendix}, \ref{thm:solutionradialappendix} need to be used. Therefore in this section, notation and results from Sections \ref{ssec:distributions} and \ref{sec:distributionderivation} are used. The reader might thus want to consult them first. The result of the computation is Equation \eqref{eq:vysolutionriemann}.}

It shall be shown in the following how a multi-dimensional Riemann Problem can exhibit a logarithmic singularity in its evolution. This is due to the fact that the acoustic system does not reduce to scalar wave equations, but also contains the vector wave equation \eqref{eq:acousticwavev}.

For computational convenience the Riemann Problem is considered in the $x$-$z$-plane. The initial velocity shall be $\vec v_0 = (0,0,1)^\text{T}$ in the first quadrant and vanish everywhere else (see Fig. \ref{fig:initialdata}). Also everywhere $p_0=0$. 

\begin{figure}[h]
 \centering
 \includegraphics[width=0.4\textwidth]{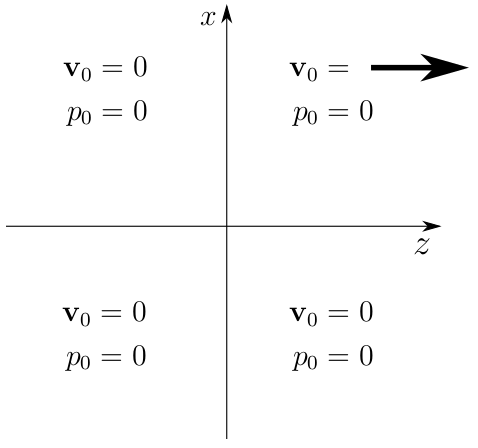}
 \caption{Setup of the 2-dimensional Riemann Problem. The only non-vanishing initial datum is the $z$-velocity in the first quadrant, indicated by the arrow. As the problem is linear its magnitude is of no importance and is chosen to be 1.}
 \label{fig:initialdata}
\end{figure}

 Denote the independent variable $\vec x =: (x, y, z)$ and the components of $\vec v =: (v_x, v_y, v_z)$, $\vec v_0 =: (v_{0x}, v_{0y}, v_{0z})$.

 The distribution $\sigma_{ij}$ defined in Theorem \ref{def:sigmaij} acts onto test functions as
 \begin{align*}
 \langle \sigma_{ij}(ct) | \test \rangle := \int_0^{ct} \dd r \frac{1}{r} \del_r \left[ \frac{1}{r} \del_r \left( r \int_{S^2_r} \dd \vec y \frac{y_i y_j}{|\vec y|^2} \psi(\vec y) \right )  -  \frac{1}{r} \int_{S^2_r} \dd \vec y \psi(\vec y)   \right ]  
 \end{align*}
 {Its components are denoted by $\sigma_{xx}, \sigma_{xy}, \ldots, \sigma_{zz}$.}
 
 Define the components of $\vec y =: (y_x, y_y, y_z)$. {The Riemann initial data are conveniently written as} $v_{0z}(\vec x) = \Theta(x) \Theta(z)$ {with the Heaviside function
 \begin{align}
  \Theta(x) = \begin{cases} 0 & x < 0 \\ 1 & x \geq 0 \end{cases}
 \end{align}}

 Inserting this and $v_{0x} = v_{0y} = 0$ into \eqref{eq:ureformulappendix} gives
 
 \begin{align*}
  \langle v_x(t, \cdot) | \test \rangle  &=  \frac{1}{4\pi} \langle {\sigma_{zx}(ct)} * \regdist{v_{0z}} | \test \rangle  \\
 &=  \frac{1}{4\pi} \int_0^{ct} \dd r \frac{1}{r} \del_r \left[ \frac{1}{r} \del_r \left( r \int_{S^2_r} \dd \vec y \frac{y_x y_z}{|\vec y|^2} \int \dd \vec x \, \Theta(x) \Theta(z) \test(\vec x + \vec y) \right )     \right ] 
 \end{align*}
 Compute first
 \begin{align*}
  \int_{S^2_r} \dd \vec y \frac{y_x y_z}{|\vec y|^2} \int \dd \vec x \, \Theta(x) \Theta(z) \test(\vec x + \vec y) 
  &= \int \dd \vec x \int_{S^2_r} \dd \vec y \frac{y_x y_z}{r^2}  \Theta(x-y_x) \Theta(z-y_z) \test(\vec x)
 \end{align*}
 This defines a regular distribution associated to
 \begin{align*}
 &\int \dd \vec y \frac{y_x y_z}{r^2}  \Theta(x-y_x) \Theta(z-y_z)\\
 \intertext{Evaluating the integral for the special case of $x = 0$ one obtains}
 &=\int_{-r}^{\min(r, z)} \dd y_z \int_{-\sqrt{r^2 - y_z^2}}^{0} \dd y_x \frac{2 y_x y_z}{r^2 \sqrt{1 - y_x^2 - y_z^2}} = \frac{2 (r^2 - z^2)^{\frac32}}{3 r} \Theta(r - |z|)
 \end{align*}
 Here the first fundamental form of the unit sphere $(1 - y_x^2 - y_z^2)^{-\frac12}$ was used to express the surface integral.

 The velocity becomes 
 \begin{align*}
  v_x(t, \vec x)  &= \frac{1}{4\pi} \int_{z}^{ct} \dd r \frac{1}{r} \del_r \left[ \frac{1}{r} \del_r \left( r \frac{2 (r^2 - z^2)^{3/2}}{3 r} \right ) \right ]
 \end{align*}
 and using that for any function $f$
 \begin{align*}
 \frac{1}{r} \del_r f(r^2) = 2 f'(r^2)
 \end{align*}
 one obtains
 \begin{align}
  v_x(t, \vec x)  &= \frac{1}{2\pi} \int_{z}^{ct} \dd r   (r^2 - z^2)^{-1/2} = \frac{1}{2 \pi} \mathscr L\left( \frac{z}{ct}  \right ) \label{eq:vysolutionriemann}
 \end{align}
  
having defined
\begin{align*}
 \mathscr L(s) := \ln \frac{1 + \sqrt{1 - s^2}}{s} = - \ln \frac{s}{2} - \frac{s^2}{4} + \mathcal O(s^4)
\end{align*}
One can verify that $\displaystyle \text{e}^{- \mathscr L(s)} = \tan \frac{\arcsin s}{2}$.

Note that due to the appearance of the factor $\Theta(r - |z|)$ above, $v_x(t, \vec x)$ vanishes outside $|\vec x| \leq ct$ by causality.

Therefore the $x$-component of the velocity has a logarithmic singularity at the origin, which is the corner of the initial discontinuity of the $z$-component. Such a behaviour of the solution does not have analogues in one spatial dimension because two different components of the velocity $\vec v$ are involved. The appearance of singularities has already been mentioned in \cite{amadori2015} in the context of self-similar solutions to Riemann Problems. Here it has been obtained by application of the general formula \eqref{eq:preformulappendix}--\eqref{eq:ureformulappendix} which is not restricted to self-similar time evolution. Moreover a careful derivation using distributional solutions, adequate for the low regularity of the initial data, has been presented.

The solution obtained so far was restricted to $x=0$ to simplify the presentation. This was also sufficient in order to study the appearance of a singularity. For Fig. \ref{fig:velxNE}--\ref{fig:vectors} the integrals in \eqref{eq:ureformulappendix} have been computed in the $x$-$z$-plane numerically using standard quadratures. They give an impression of the entire solution of the two-dimensional Riemann Problem. It is not very difficult to obtain analytic expressions in all the plane by slightly adapting the above calculations.

A vector plot of the flow is shown in Fig. \ref{fig:vectors}.

\begin{figure}[h]
 \centering
\includegraphics[width=0.32\textwidth]{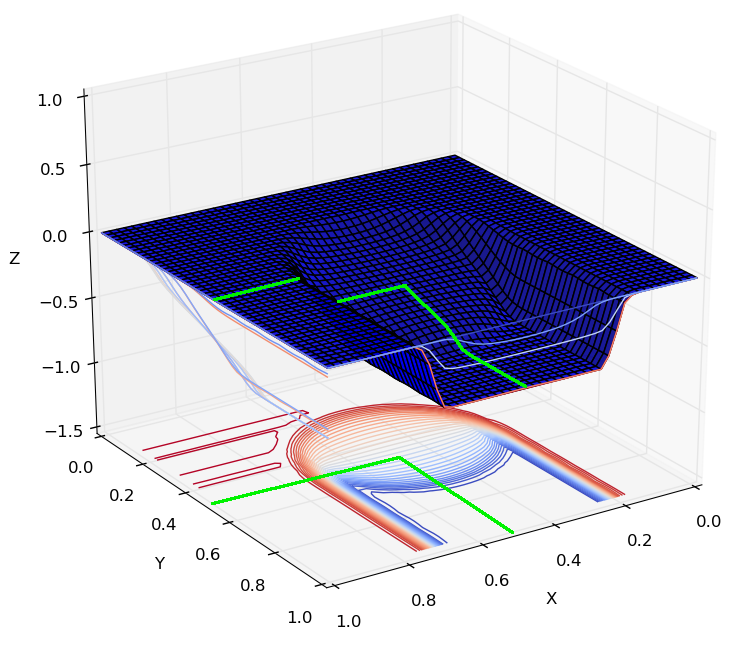}
\includegraphics[width=0.32\textwidth]{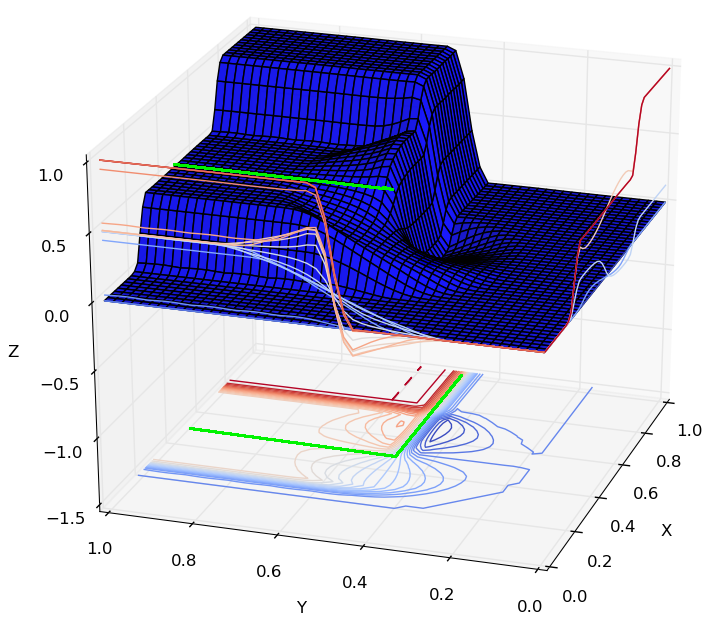}
\includegraphics[width=0.32\textwidth]{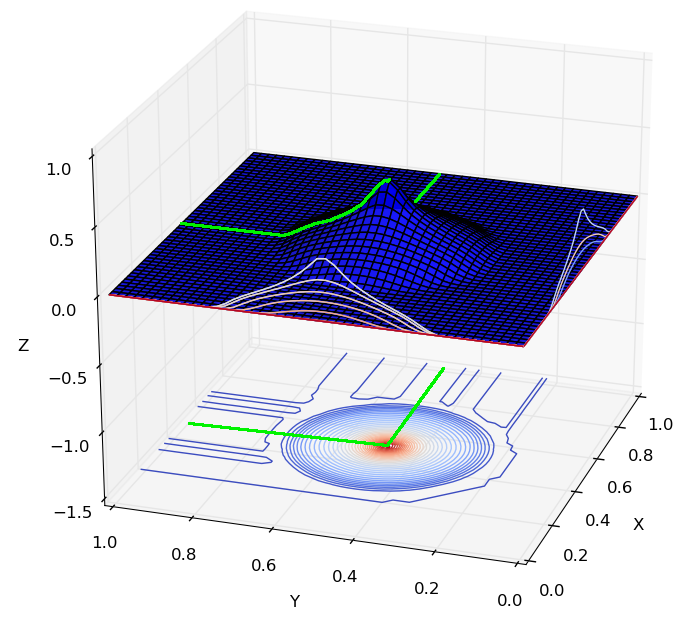}
\caption{Solution of Riemann problem at time $ct = 0.25$. {\sl Left}: Pressure. {\sl Center}: $x$-velocity. {\sl Right}: $y$-velocity. The smoothed out discontinuities are due to finite size sampling of the solution. In green the location of the initial discontinuity.}
\label{fig:velxNE}
\end{figure}

\begin{figure}[h]
 \centering
 \includegraphics[width=0.6\textwidth]{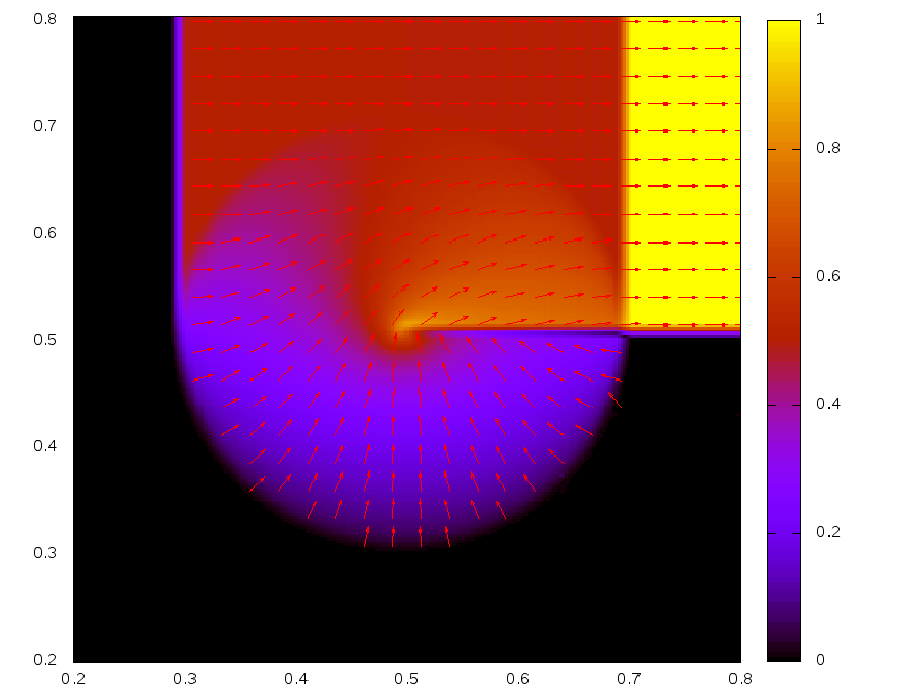}
 \caption{Solution of Riemann problem at time $ct = 0.2$. The direction of the velocity $\vec v(t, \vec x)$ is indicated by the arrows, colour coded is the absolute value $|\vec v|$.}
 \label{fig:vectors}
\end{figure}

\section{Godunov finite volume scheme}
\label{sec:godunov}

The exact evolution operator for linear acoustics \eqref{eq:acousticp}--\eqref{eq:acousticv} already appears in \cite{eymann13,roe17}, albeit without the justification as distributional solution. It has been taken as inspiration for a new kind of numerical schemes in \cite{eymann13}: the \emph{active flux} method contains additional, pointwise degrees of freedom which are evolved in time exactly. A finite volume scheme of the usual kind was derived in \cite{franck17}, but has taken only an approximation of this exact solution into account. Among others, \cite{lukacova00} considered the bicharacteristics relation (\cite{courant62}) in order to derive schemes which incorporate multi-dimensional information. However, the bicharacteristics involve mantle integrals along the characteristic cone and do not allow to write down the solution as a function of initial data directly. Thus again, these schemes only use an approximation of the exact relation.

The conceptually simplest finite volume is a Godunov scheme with the Riemann Problem as a building block, \cite{abgrall93,li02,amadori2015} studied the solutions to multi-dimensional Riemann Problems for linear acoustics using a self-similarity ansatz. However, no numerical scheme has been derived there. 

This paper presents a full derivation of such a scheme and discusses numerical results obtained with it. This is similar in spirit to an idea by Gelfand mentioned in \cite{godunov76,godunov97} (a translated version is \cite{godunov08}).

\subsection{Procedure} \label{ssec:godunovprocedure}

In this Section the aim is to derive a two-dimensional finite volume scheme, which updates the numerical solution $q_{ij}^n$ in a Cartesian cell $\mathcal C_{ij} = [x_{i-\frac12}, x_{i+\frac12}] \times [y_{j-\frac{1}{2}}, y_{j+\frac12}]$ at a time $t^n$ to a new solution $q_{ij}^{n+1}$ at time $t^{n+1} = t^n + \Delta t$ using $q_{ij}^n$ and information from the neighbours of $\mathcal C_{ij}$. The grid is taken equidistant. 

The knowledge of the exact solution makes it possible to derive a Godunov scheme via the procedure of \emph{reconstruction}-\emph{evolution}-\emph{averaging} (\cite{toro09,leveque02}). In the following it is shown that for linear systems this can be achieved using just \emph{one} evaluation of the solution formula at a single point in space by suitably modifying the initial data.

Consider the general linear $n \times n$ hyperbolic system \eqref{eq:system} in $d$ spatial dimensions
\begin{align*}
 \del_t q + (\vec J \cdot \nabla) q &= 0 
\end{align*}
with initial data
\begin{align*}
 q(0, \vec x) = q_0(\vec x)
\end{align*}

Recall the Definition \ref{def:evolutionoperator} of the time evolution operator $T_t$: $(T_t\, q_0)(t, \vec x)$ satisfies \eqref{eq:system} with (at $t=0$) initial data $q_0(\vec x)$.

\begin{definition}[Sliding average]
 Define the sliding average operator $A$ in two spatial dimensions by its action onto a function $q : \mathbb R^d \to \mathbb R^n$ as
\begin{align}
 (A q)(\vec x) := \frac{1}{\Delta x \Delta y} \!\!\!\!\!\!\!\!\!\!\! \int\limits_{\left[-\frac{\Delta x}{2},\frac{\Delta x}{2}\right] \times \left[-\frac{\Delta y}{2}, \frac{\Delta y}{2}\right]} \!\!\!\!\!\!\!\!\!\! \dd \vec s \,\,\,\,\,\,\,\,\,\, q(\vec x + \vec s) \label{eq:slidingaverage}
\end{align}
\end{definition}

The objective is to construct a Godunov scheme by introducing a reconstruction {$q^n_\text{recon}(\vec x)$ obtained from} the discrete values $\{ q_{ij}^n \}$ in the cells and computing its exact time evolution. The reconstruction needs to be conservative, i.e. $(A q^n_\text{recon})(\vec x_{ij}) = q^n_{ij}$. The easiest choice is a piecewise constant reconstruction
\begin{align*}
 q^n_\text{recon}(\vec x) := q^n_{ij} \qquad \text{if} \qquad \vec x \in \mathcal C_{ij}
\end{align*} 
It is shown in Fig. \ref{fig:rea} (left) and obviously is locally integrable. 

The Godunov procedure \emph{reconstruction}-\emph{evolution}-\emph{averaging} can be written as
\begin{align*}
 q_{ij}^{n+1} = (A\, T_{\Delta t}\, q^n_\text{recon})(\vec x_{ij})
\end{align*}

\begin{figure}[h]
 \centering
 \includegraphics[width=0.48\textwidth]{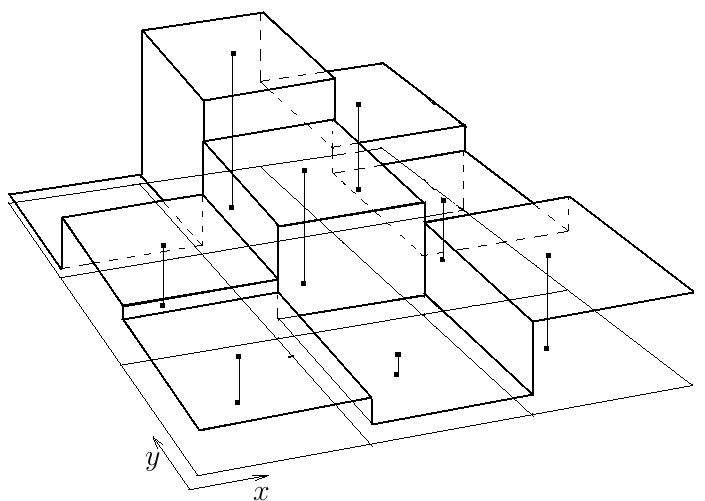} \hfill \includegraphics[width=0.48\textwidth]{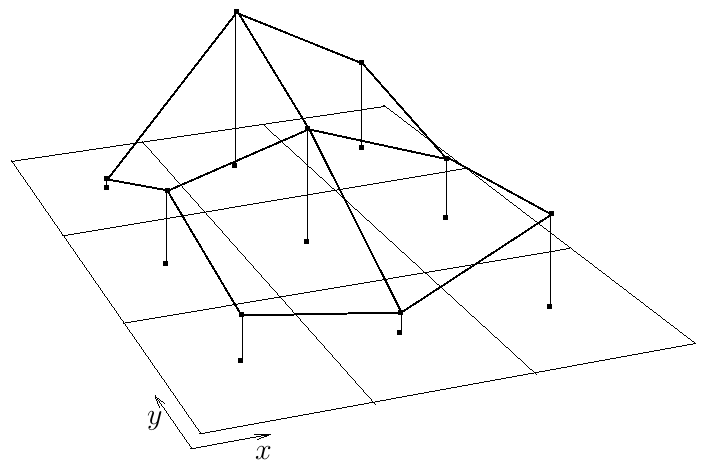}
 \caption{ {\sl Left}: Piecewise constant reconstruction. {\sl Right}: Application of the sliding average to the same data amounts to a bilinear interpolation of the values $q_{ij}$ interpreted as \emph{point values} at $\vec x_{ij}$. The two reconstructions are equivalent for linear problems, i.e. they lead to the same Godunov scheme.}
 \label{fig:rea}
\end{figure}

\begin{lemma} Provided all expressions exist, the two operators commute:
\begin{align*}
 \left. \phantom{\frac{x}{x}} A\, T_{\Delta t}\, q^n_\text{recon} \right |_{\vec x_{ij}} = \left. \phantom{\frac{x}{x}} T_{\Delta t}\, A \,q^n_\text{recon} \right |_{\vec x_{ij}}
\end{align*}
\end{lemma}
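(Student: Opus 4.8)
The plan is to deduce the commutation from two structural facts: the operator $A$ is a (continuous) superposition of spatial translations, and the evolution operator $T_t$ is both linear and translation invariant. The translation invariance is the only place where the special structure of \eqref{eq:system} enters, and it holds precisely because $\vec J$ is constant in $\vec x$. Introduce the translation operator $\tau_{\vec s}$ by $(\tau_{\vec s} f)(\vec x) := f(\vec x + \vec s)$. First I would check that $T_t$ commutes with $\tau_{\vec s}$: if $q(t,\vec x)=(T_t q_0)(t,\vec x)$ solves \eqref{eq:system} with datum $q_0$, then $w(t,\vec x) := q(t,\vec x + \vec s)$ satisfies $\del_t w + (\vec J \cdot \nabla) w = 0$ by the chain rule (using that $\vec J$ does not depend on $\vec x$), with initial datum $w(0,\cdot)=\tau_{\vec s} q_0$. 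By the assumed uniqueness of solutions to the Cauchy problem this forces $w = T_t(\tau_{\vec s} q_0)$, i.e. $\tau_{\vec s}\, T_t = T_t\, \tau_{\vec s}$.

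Next I would rewrite the sliding average in \eqref{eq:slidingaverage} as an average of translates,
\begin{align*}
 A = \frac{1}{\Delta x \Delta y} \int\limits_{\left[-\frac{\Delta x}{2},\frac{\Delta x}{2}\right] \times \left[-\frac{\Delta y}{2}, \frac{\Delta y}{2}\right]} \!\!\!\!\!\!\! \dd \vec s \,\,\,\, \tau_{\vec s},
\end{align*}
so that $A q_0 = \frac{1}{\Delta x \Delta y}\int \dd \vec s\, \tau_{\vec s} q_0$ over the cell. Applying $T_{\Delta t}$ and using that $T_{\Delta t}$ is linear (the solution formulae of Theorem \ref{thm:solution}, and their distributional counterparts, are linear integral transforms of the data), I would interchange $T_{\Delta t}$ with the integration in $\vec s$ and then move each $\tau_{\vec s}$ across $T_{\Delta t}$ by the step above:
\begin{align*}
 T_{\Delta t}\, A\, q_0 = \frac{1}{\Delta x \Delta y}\int \dd \vec s\, T_{\Delta t}\, \tau_{\vec s}\, q_0 = \frac{1}{\Delta x \Delta y}\int \dd \vec s\, \tau_{\vec s}\, T_{\Delta t}\, q_0 = A\, T_{\Delta t}\, q_0.
\end{align*}
This yields the asserted identity not merely at $\vec x_{ij}$ but pointwise wherever the expressions make sense; the restriction to $\vec x_{ij}$ is only what the Godunov update requires.

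The one genuinely nontrivial step is the interchange of $T_{\Delta t}$ with the integral over $\vec s$. For $C^2 \cap L^\infty$ data this is a routine application of Fubini's theorem, since $T_{\Delta t}$ is, by \eqref{eq:phelmholtzfunc}--\eqref{eq:vhelmholtzfunc}, a finite combination of spherical means (integrals over $\vec y \in S^2_1$) and derivatives of the datum, so swapping the spherical integral with the bounded-domain cell integral is justified by the local integrability of the relevant integrands. The delicate case is the distributional setting of Theorem \ref{thm:solutionappendix}, where $T_{\Delta t}$ carries spatial derivatives that must be read in the sense of distributions; there I would rely on the linearity and continuity of the distributional evolution operator together with the fact that convolution against the (compactly supported, bounded) indicator of the cell is a continuous operation on the relevant space of distributions, so that $T_{\Delta t}$ and $A$ again commute. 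The hypothesis \textbf{provided all expressions exist} is exactly what licenses these interchanges, so I expect the proof to reduce, in all cases, to linearity plus the translation invariance established in the first step.
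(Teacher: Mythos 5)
Your proof is correct and takes essentially the same route as the paper, whose entire proof is the three-line computation pulling $T_{\Delta t}$ out of the cell integral, justified only by the words ``by linearity of $T_{\Delta t}$''. You are in fact more careful than the paper: the step from $\int \dd \vec s\, (T_{\Delta t}\, q)(\vec x + \vec s)$ to $T_{\Delta t} \int \dd \vec s\, q(\vec x + \vec s)$ tacitly requires not only linearity (the Fubini-type interchange you discuss) but also the translation invariance $\tau_{\vec s}\, T_{\Delta t} = T_{\Delta t}\, \tau_{\vec s}$, which you correctly isolate and prove from the constancy of $\vec J$ together with uniqueness of the Cauchy problem.
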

\begin{proof} 
By linearity of $T_{\Delta t}$ 
\begin{align*}
 (A \, T_{\Delta t}\, q^n_\text{recon})(\vec x) &= \frac{1}{\Delta x \Delta y} \!\!\!\!\!\!\!\!\!\!\! \int\limits_{\left[-\frac{\Delta x}{2},\frac{\Delta x}{2}\right] \times \left[-\frac{\Delta y}{2}, \frac{\Delta y}{2}\right]} \!\!\!\!\!\!\!\!\!\! \dd \vec s \,\,\,\,\,\,\,\,\,\, (T_{\Delta t}\, q^n_\text{recon})(\vec x + \vec s) \\
 &= \frac{1}{\Delta x \Delta y} \,\,\,\,\,\, T_{\Delta t} \!\!\!\!\!\!\!\!\!\!\! \int\limits_{\left[-\frac{\Delta x}{2},\frac{\Delta x}{2}\right] \times \left[-\frac{\Delta y}{2}, \frac{\Delta y}{2}\right]} \!\!\!\!\!\!\!\!\!\! \dd \vec s \,\,\,\,\,\,\,\,\,\,  q^n_\text{recon}(\vec x + \vec s)\\
 &= T_{\Delta t} \, (A q^n_\text{recon})(\vec x)
\end{align*}

\end{proof}

In short, for linear systems the last two steps of \emph{reconstruction}-\emph{evolution}-\emph{averaging} can be turned around to be \emph{reconstruction}-\emph{averaging}-\emph{evolution} which tremendously simplifies the derivation. It should be noted that in practice, evaluating the solution formulae can be technically demanding. Therefore the straightforward derivation that first obtains a solution in the entire cell requires a lot of effort. Employing the structure of the conservation law allows to rewrite the volume integral into a time integral over the boundary. Now the exact solution is only needed along the boundary of the cell, and one of the components of $\vec x$ is zero. However, one still needs to evaluate the solution formulae at a continuous set of $\vec x$ values. The strategy presented above allows to choose a coordinate system in which the solution formulae need to be computed only once at $\vec x = 0$ by taking the sliding-averaged initial data $A q^n_\text{recon}$. 

The sliding average of a piecewise constant reconstruction on a 2-d grid is a bilinear interpolation of the values $q_{ij}$, which are interpreted pointwise at locations $\vec x_{ij}$ (see Fig. \ref{fig:rea}, right). This staggered reconstruction is continuous. {It should not, however, be confused with bilinear reconstructions aimed at deriving schemes of second order: the staggered bilinear reconstruction shown in Figure \ref{fig:rea} contains exactly the same amount of information as the piecewise constant reconstruction. The reason is that between the two reconstructions the interpretation of the discrete degree of freedom changes as well. For the piecewise constant reconstruction the discrete degree of freedom is a point value. Although derived from a different viewpoint, the scheme remains conservative, of course.}

\subsection{Notation}

In order to cope with the lengthy expressions for the numerical scheme, the following notation is used:
\begin{align*}
 [q]_{i+\frac12} &:= q_{i+1} - q_i & \{q\}_{i+\frac12} &:= q_{i+1} + q_i\\
 [q]_{i\pm1} &:= q_{i+1} - q_{i-1}\\
 [[q]]_{i\pm\frac12} &:= [q]_{i+\frac12} - [q]_{i-\frac12} & \{ \{ q \}\}_{i\pm\frac12}&:= \{q\}_{i+\frac12} + \{q\}_{i-\frac12}\\
 &{= q_{i+1} - 2 q_i + q_{i-1}} & &{= q_{i+1} + 2 q_i + q_{i-1}}
\end{align*}
The only nontrivial identity is
\begin{align*}
 \{[q]\}_{i\pm\frac12} = [q]_{i+\frac12} + [q]_{i-\frac12} = [q]_{i\pm1}
\end{align*}
For multiple dimensions the notation is combined, e.g.
\begin{align*}
 [[q_i]]_{j\pm\frac12} &= q_{i,j+1} - 2 q_{ij} + q_{i,j-1} & {[[q]]_{i\pm\frac12,j}} &{= q_{i+1,j} - 2 q_{ij} + q_{i-1,j}}\\
 {\{[q]_{i+\frac12}\}_{j+\frac12} }&{= q_{i+1,j+1} - q_{i,j+1} + q_{i+1,j} - q_{ij}} &
 [[q]_{i\pm1}]_{j\pm1} &= q_{i+1,j+1} - q_{i-1,j+1} - q_{i+1, j-1} + q_{i-1,j-1}
\end{align*}

\subsection{Finite volume scheme}
\newcommand{\eps}{\epsilon}
Performing the evaluation of the exact solution formulae as outlined in Section \ref{ssec:godunovprocedure} is straightforward: In every one of the four quadrants all the derivatives of the initial data exist. At the locations where the quadrants meet, the initial data are continuous with, in general, discontinuous first derivatives. However, the derivatives are continuous in $r$, as the kinks are all oriented towards the location $\vec x_{ij}$ (see Fig. \ref{fig:rea}, right). Thus the radial derivatives never lead to the appearance of actual distributions and the solution is a function. The reason for the different behaviour as compared to the Riemann Problem in Section \ref{sec:riemann} is that here the evolution operator is applied onto sliding-averaged discontinuities which are continuous. This goes back to the \emph{averaging}-step of the Godunov procedure.

{Consider the center of the cell $(i,j)$ to be the origin of the coordinate system. The bilinear reconstruction $Aq^n_\text{recon}$ on $[0, \Delta x] \times [0, \Delta y]$ interpolates the values $q_{ij}$, $q_{i+1,j}$, $q_{i,j+1}$ and $q_{i+1,j+1}$ at the four corners:
\begin{align*}
 (Aq^n_\text{recon})(\vec x) = q^n_{ij} + \frac{[q^n]_{i+\frac12,j}}{\Delta x}x + \frac{[q^n_i]_{j+\frac12}}{\Delta y}y + \frac{[[q^n]_{i+\frac12}]_{j+\frac12}}{\Delta x \Delta y}xy \qquad \vec x \in [0, \Delta x] \times [0, \Delta y]
\end{align*}
Analogous formulae are easily obtained for the other three quadrants. This allows to compute spherical averages at $\vec x = 0$ by summing over averages in the four quadrants:
\begin{align*}
 \frac{1}{4\pi} \int_{S^1} \dd \vec y (Aq^n_\text{recon})(r \vec y) = q^n_{ij} + \left( \frac{[[q^n_i]]_{j\pm\frac12}}{4 \Delta y}  + \frac{[[q^n]]_{i\pm\frac12,j}}{4 \Delta x} \right ) r + \frac{[[ [[q^n]]_{i\pm\frac12}   ]]_{j\pm\frac12}}{6 \pi \Delta x \Delta y} r^2
\end{align*}
For linear acoustics, $q = (\vec v, p)$. Analogously the other spherical averages are obtained, for instance
\begin{align*}
 \frac{1}{4\pi} \int_{S^1} \dd \vec y \, \vec y \cdot (A\vec v^n_\text{recon})(r \vec y) = \left( \frac{[u^n]_{i\pm1,j}}{6 \Delta x} + \frac{[v^n_i]_{j\pm1}}{6 \Delta y} \right )r + \frac{ [ [[v^n]]_{i\pm\frac12}  ]_{j\pm1} + [[ [u^n]_{i\pm1} ]]_{j\pm\frac12}}{32 \Delta x \Delta y} r^2
\end{align*}
Using Equation \eqref{eq:preformulfunc} gives
\begin{align*} 
 p(t, 0) = p^n_{ij} &+ \left( \frac{[[p^n_i]]_{j\pm\frac12}}{2 \Delta y}  + \frac{[[p^n]]_{i\pm\frac12,j}}{2 \Delta x} \right ) ct + \frac{[[ [[p^n]]_{i\pm\frac12}   ]]_{j\pm\frac12}}{2 \pi \Delta x \Delta y} (ct)^2\\
 &-   
 \left( \frac{[u^n]_{i\pm1,j}}{2 \Delta x} + \frac{[v^n_i]_{j\pm1}}{2 \Delta y} \right )ct - \frac{ [ [[v^n]]_{i\pm\frac12}  ]_{j\pm1} + [[ [u^n]_{i\pm1} ]]_{j\pm\frac12}}{8 \Delta x \Delta y} (ct)^2 
\end{align*}

In order to obtain the time evolution of the velocity further spherical means are computed in complete analogy.} After carefully collecting all the different terms, and taking care of the correct $\epsilon$-scalings one obtains the following numerical scheme
\begin{align}
u^{n+1} &= u_{ij}^n - \frac{c\Delta t}{2\epsilon\Delta x} \left( [p^n]_{i\pm1, j} - [[u^n]]_{i\pm\frac12,j}    \right )  
 \nonumber\\&\phantom{mm}- \frac12 \frac{(c\Delta t)^2}{\epsilon^2\Delta x \Delta y} \left( 
 - \frac{1}{2\pi} [[ [[u^n]]_{i\pm\frac12} ]]_{j\pm\frac12} - \frac14 [[v^n]_{i\pm1}]_{j\pm1}  + \frac14 [[[p^n]_{i\pm1}]]_{j\pm\frac12}
\right ) \label{eq:riemannscheme-u}\\
v^{n+1} &=v_{ij}^n - \frac{c\Delta t}{2\epsilon \Delta y}\left( [p^n]_{i,j\pm1} - [[ v^n]]_{i,j\pm\frac12} \right ) 
\nonumber\\&\phantom{mm}-\frac12 \frac{(c\Delta t)^2}{\epsilon^2\Delta x \Delta y } \left( 
- \frac1{2\pi} [[ [[v^n]]_{i\pm\frac12}]]_{j\pm\frac12}  -  \frac14 [[u^n]_{i\pm1}]_{j\pm1} + \frac14 [ [[p^n]]_{i\pm\frac12}]_{j\pm1} 
\right )\label{eq:riemannscheme-v}\\
p^{n+1} &= p^n_{ij} - \frac{c\Delta t}{2\epsilon \Delta x} \left( [u^n]_{i\pm1,j} - [[p^n]]_{i\pm\frac12,j} \right ) - \frac{c\Delta t}{2\epsilon \Delta y} \left( [v^n]_{i,j\pm1} - [[p^n]]_{i,j\pm\frac12} \right ) \nonumber\\
&\phantom{mm}- \frac12 \frac{(c\Delta t)^2}{\epsilon^2 \Delta x \Delta y } \left( 
\frac14 [[ [u^n]_{i\pm1}]]_{j\pm\frac12} +\frac14 [ [[v^n]]_{i\pm\frac12} ]_{j\pm1}-2 \cdot \frac1{2\pi}[[ [[p^n]]_{i\pm\frac12}]]_{j\pm\frac12}  
\right )\label{eq:riemannscheme-p}
\end{align}
This scheme is conservative because it is a Godunov scheme, and can be written as
\begin{align*}
q^{n+1} = q^n - \frac{\Delta t}{\Delta x} \left(f^{(x)}_{i+\frac12,j} - f^{(x)}_{i-\frac12,j}\right) -  \frac{\Delta t}{\Delta y} \left(f^{(y)}_{i,j+\frac12} - f^{(y)}_{i,j-\frac12} \right )
\end{align*}
One can identify the $x$-flux through the boundary located at $x_{i+\frac12}$:
\begin{align}
 f^{(x)}_{i+\frac12,{j}}&= \frac12 \frac{c}{\epsilon} \veccc{\{p\}_{i+\frac12, j} - [u]_{i+\frac12,j} }{0}{\{u\}_{i+\frac12,j} - [p]_{i+\frac12,j}}\nonumber \\&\phantom{mm}+ \frac12 \frac{c \Delta t }{\epsilon \Delta y} \cdot \frac{c}{\epsilon} \veccc{
   - \frac{1}{2\pi} [[ [u]_{i+\frac12}]]_{j\pm\frac12}  - \frac14   [ \{ v \}_{i+\frac12}  ]_{j\pm1}      + \frac14 [[\{p\}_{i+\frac12} ]]_{j\pm\frac12}  
 }
 {0 }
 { 
     \frac14  [[v]_{i+\frac12}]_{j\pm1}     - \frac{1}{2\pi}  [[[p]_{i+\frac12}]]_{j\pm\frac12}     } \label{eq:exactriemannflux}
\end{align}
The corresponding perpendicular flux is its symmetric analogue. The first bracket is the flux obtained in a dimensionally split situation, {i.e. when one-dimensional information is collected from different directions}. As the scheme is a Godunov scheme for piecewise constant reconstruction, it is first order in space and time.
 
The appearance of prefactors which contain $\pi$ in schemes derived using the exact multi-dimensional evolution operators has already been noticed in \cite{lukacova00}, but none of the schemes mentioned therein matches the one presented here.

For better comparison to other schemes, the scheme \eqref{eq:riemannscheme-u}--\eqref{eq:riemannscheme-p} can be rewritten in the variables prior to symmetrization, i.e. such that it is a numerical approximation to \eqref{eq:acousticscaledv}--\eqref{eq:acousticscaledp}. This is achieved by applying the transformation \eqref{eq:trafomatrix} or, which is equivalent, by replacing $p \mapsto \frac{p}{c \epsilon}$ everywhere.

Dimensionally split schemes in two spatial dimensions have a stability condition (\cite{godunov76}, Eq. 8.15, p. 63)
\begin{align*}
 c \Delta t < \frac{1}{\frac{1}{\Delta x} + \frac{1}{\Delta y}} 
\end{align*}
which for square grids gives a maximum \cfl number of 0.5. As the present scheme is an exact multidimensional Godunov scheme it is stable up to the physical \cfl number.

\subsection{Numerical examples} \label{ssec:numericalresults}

The scheme \eqref{eq:riemannscheme-u}--\eqref{eq:riemannscheme-p} is applied to several test cases. First, multi-dimensional Riemann Problems are considered, among them the one considered analytically in Section \ref{sec:riemann}. The last test is devoted to the low Mach number abilities of the scheme.

\vspace{0.1cm}
\subsubsection{Riemann Problems}

Two Riemann Problems are considered. The first setup is that of Section \ref{sec:riemann} (Fig. \ref{fig:initialdata}); it is solved on a square grid of $101 \times 101$ cells on a domain that is large enough such that the disturbance produced by the corner has not reached the boundaries for $t=0.25$. Here, $c = \epsilon = 1$. The results are shown in Fig. \ref{fig:riemannschemesolution}. In Fig. \ref{fig:vycomparison} the $y$-component of the velocity obtained with the numerical scheme is compared to the analytic solution \eqref{eq:vysolutionriemann} found in Section \ref{sec:riemann}. {The analytic solution is radially symmetric, this is why the numerical solution is plotted against the radius. The numerical error leads to a slight scatter of the points depending on the angle. For low \cfl\! numbers, the multi-dimensional scheme here does not seem to show any advantage. The stability domain of the dimensionally split scheme, {i.e. of the scheme which combines solutions of one-dimensional problems in different directions}, however, only extends up to $\cfl\!=0.5$. For high \cfl\! numbers, the multi-dimensional scheme is able to capture the features of the solution slightly better. Here the advantage of the multi-dimensional scheme becomes obvious -- the increased stability region allows to run computations with a $\cfl\!$ condition close to the physical limit.

A set of multi-dimensional Riemann Problems has been presented in \cite{amadori2015}. The Riemann Problem no. 3, p. 101 is given by
\begin{align*}
 p_0(\vec x ) &= 0 & u_0(\vec x) &= v_0(\vec x) = \sign(xy)
\end{align*}
This Riemann Problem is chosen as another test case. Fig. 6.6 in \cite{amadori2015} shows the analytic solution. Here, this setup is computed with the multi-dimensional Godunov scheme and the dimensionally split upwind scheme on a $51 \times 51$ grid. For the former, again, a high \cfl\! number can be chosen. Figure \ref{fig:riemanngosse} shows the results. One observes that the multi-dimensional scheme is able to resolve the features of the multi-dimensional interaction region much more sharply than the dimensionally split upwind scheme. 

Additionally, the dimensionally split scheme produces incorrect jumps in the central region, shown in Figure \ref{fig:riemanngossecenter}. The multi-dimensional scheme yields a solution of much better quality with no detectable incorrect jumps. In this case the multi-dimensional Godunov scheme is clearly superior because it takes into account truly multi-dimensional interactions directly, and not via one-dimensional steps.

\begin{figure}[h]
 \centering
\includegraphics[width=0.32\textwidth]{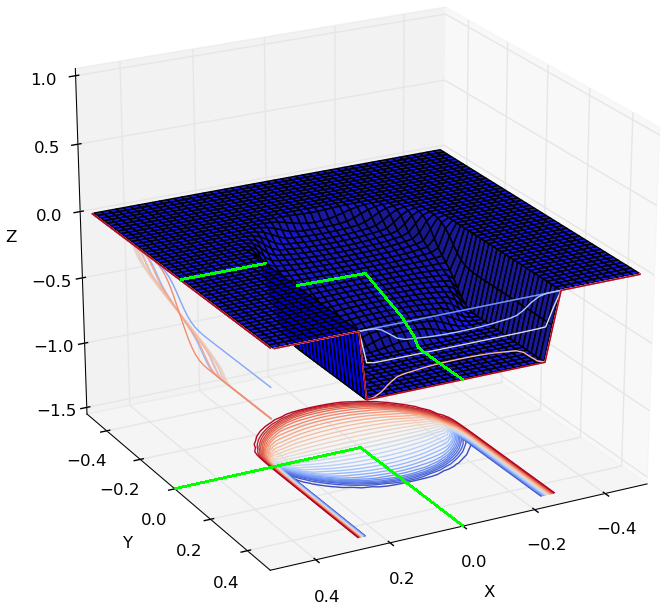}
\includegraphics[width=0.32\textwidth]{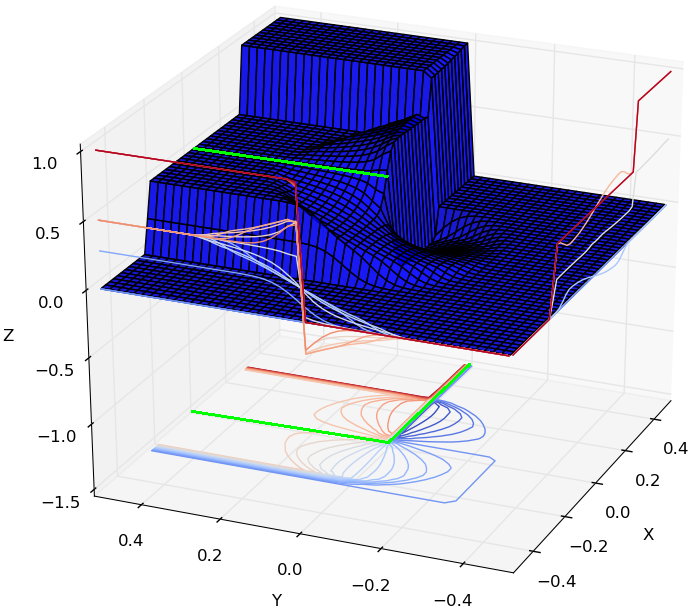}
\includegraphics[width=0.32\textwidth]{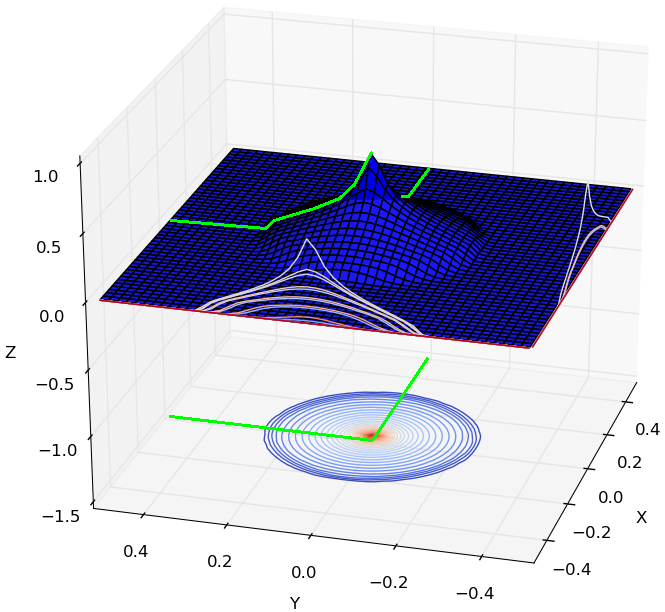}
\caption{Results of a numerical computation of the Riemann problem of Section \ref{sec:riemann} at time $ct = 0.25$ using scheme \eqref{eq:riemannscheme-u}--\eqref{eq:riemannscheme-p}. {\sl Left}: Pressure. {\sl Center}: $x$-velocity. {\sl Right}: $y$-velocity. Compare the images to Fig. \ref{fig:velxNE}. The $\cfl\!$ number has been chosen very close to 1; for small values the discontinuities are smoothed out more.}
\label{fig:riemannschemesolution}
\end{figure}

\begin{figure}[h]
 \centering
\includegraphics[width=0.5\textwidth]{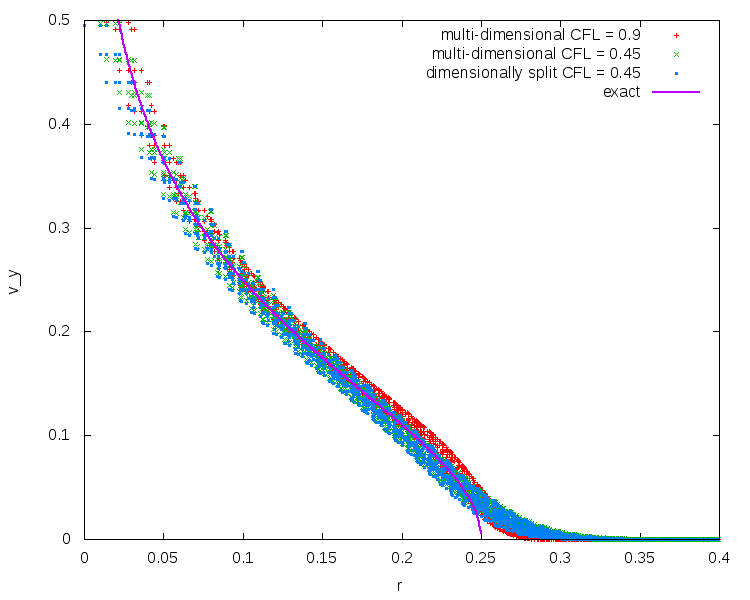}\phantom{mmmmmmmmmmmmm}
\caption{{The $y$-component of the velocity obtained by the numerical scheme \eqref{eq:riemannscheme-u}--\eqref{eq:riemannscheme-p} for \cfl\! numbers 0.9 and 0.45 is shown together with the result obtained by a dimensionally split scheme ($\cfl\!=0.45$) and the analytic solution \eqref{eq:vysolutionriemann}. The values are plotted against the radius $\sqrt{x^2 + y^2}$. One observes that the multi-dimensional scheme slightly outperforms the dimensionally split one, because it can be run with high \cfl\! numbers.}}
\label{fig:vycomparison}
\begin{picture}(0,0)
 \put(10,105){\includegraphics[width=0.4\textwidth]{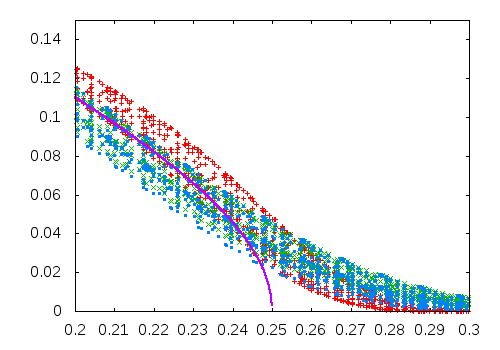}}
\end{picture}
\end{figure}

\begin{figure}[h]
 \centering
\includegraphics[width=0.25\textwidth]{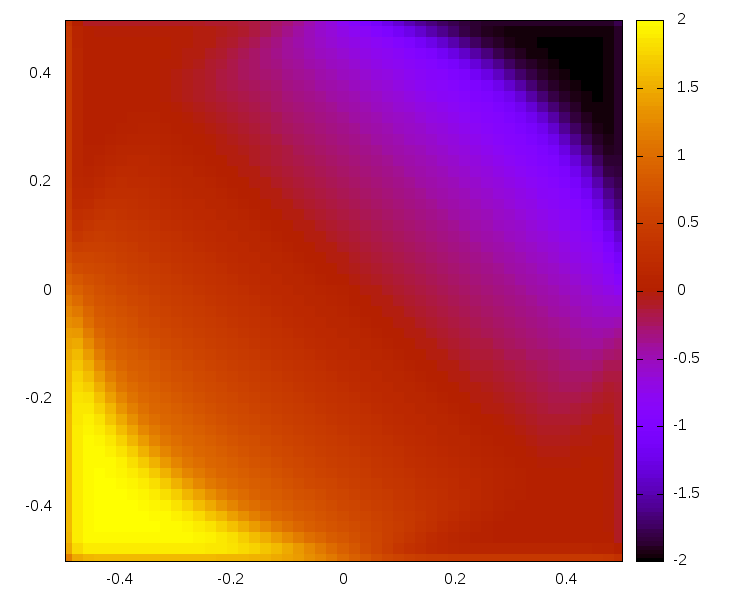}
\includegraphics[width=0.25\textwidth]{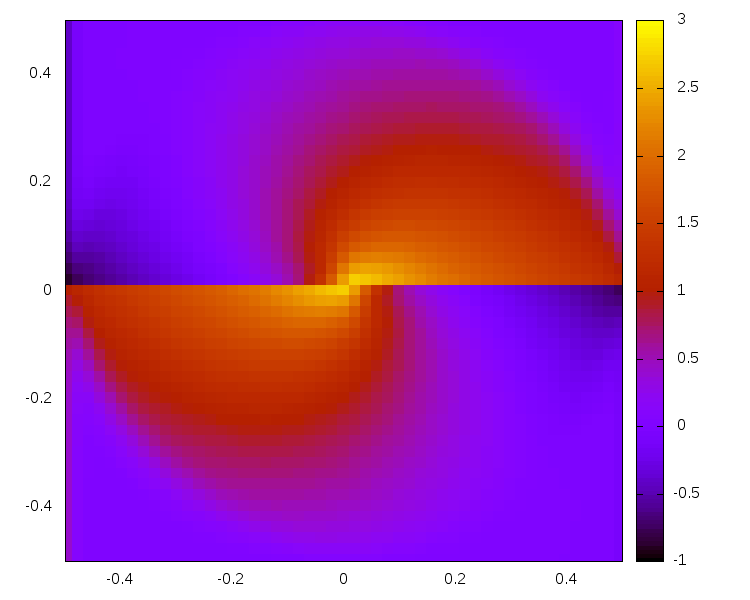}
\includegraphics[width=0.25\textwidth]{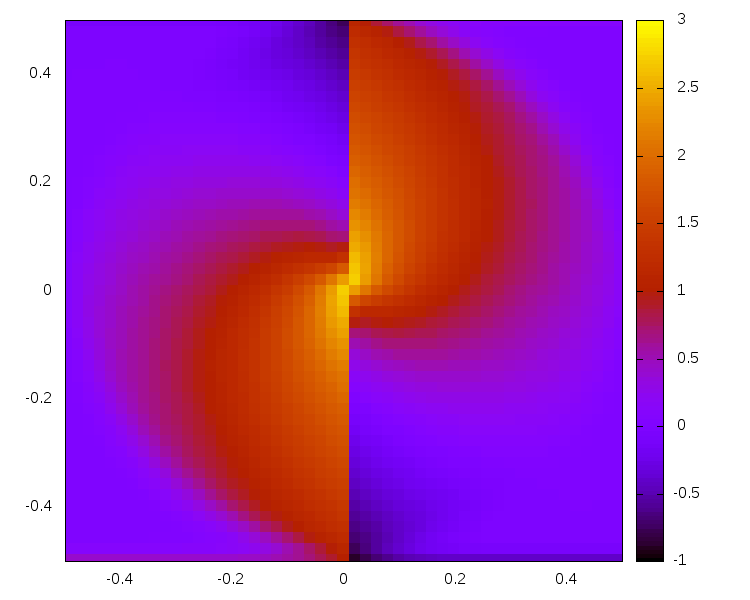}\\
\includegraphics[width=0.25\textwidth]{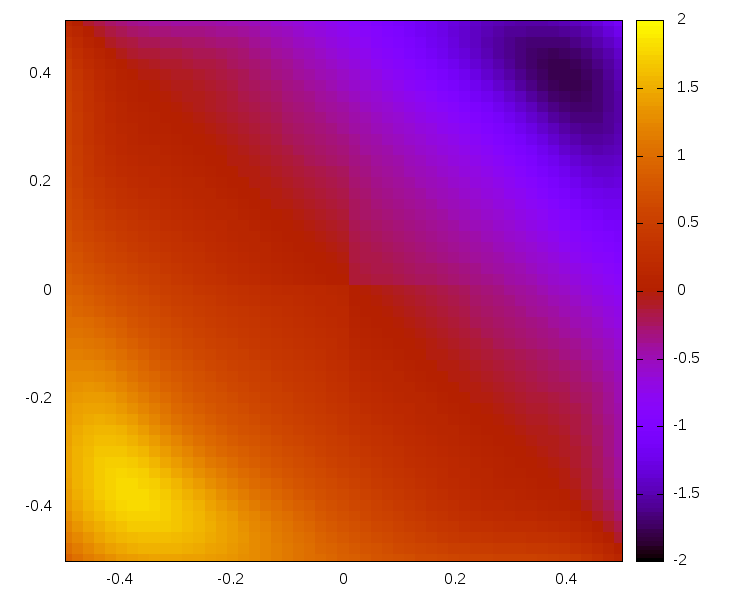}
\includegraphics[width=0.25\textwidth]{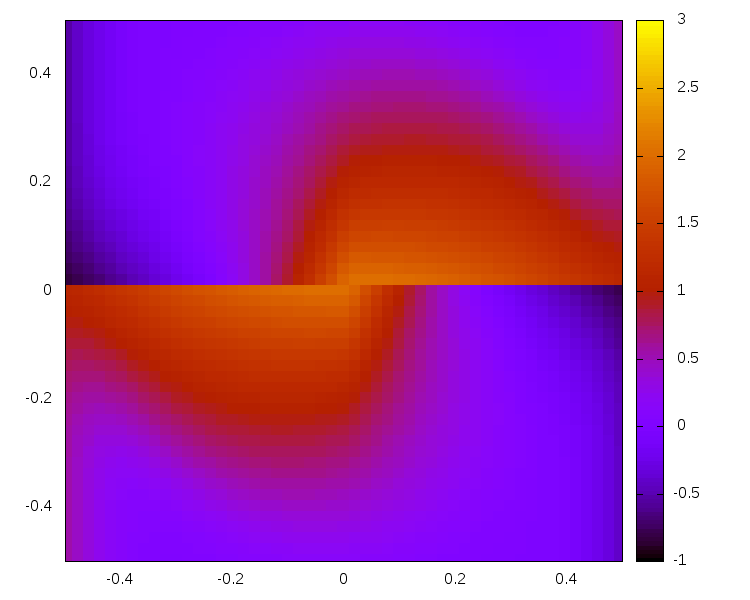}
\includegraphics[width=0.25\textwidth]{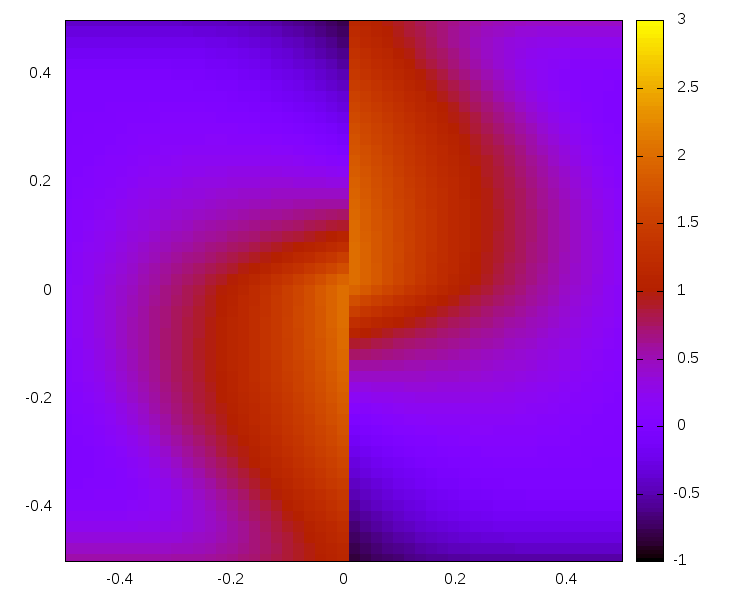}
\caption{{Results of a numerical computation of the Riemann problem from \cite{amadori2015}. The numerical solution shown at time $ct = 0.5$. {\sl Top row}: Multi-dimensional scheme \eqref{eq:riemannscheme-u}--\eqref{eq:riemannscheme-p} with $\cfl\!=0.98$, {\sl Bottom row}: Dimensionally split scheme with $\cfl\!=0.45$. {\sl Left}: Pressure. {\sl Center}: $x$-velocity. {\sl Right}: $y$-velocity.}}
\label{fig:riemanngosse}
\end{figure}

\begin{figure}[h]
 \centering
\includegraphics[width=0.32\textwidth]{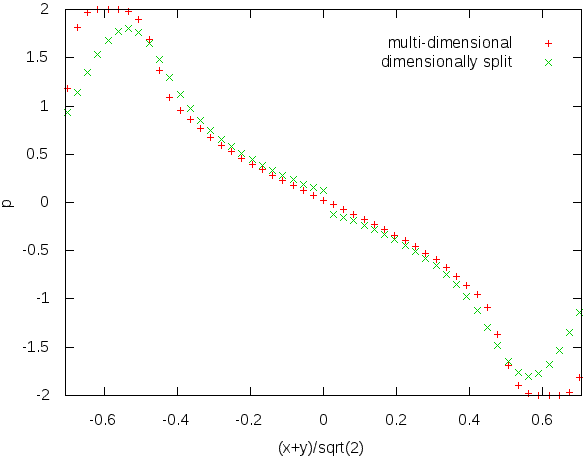}\hfill
\includegraphics[width=0.32\textwidth]{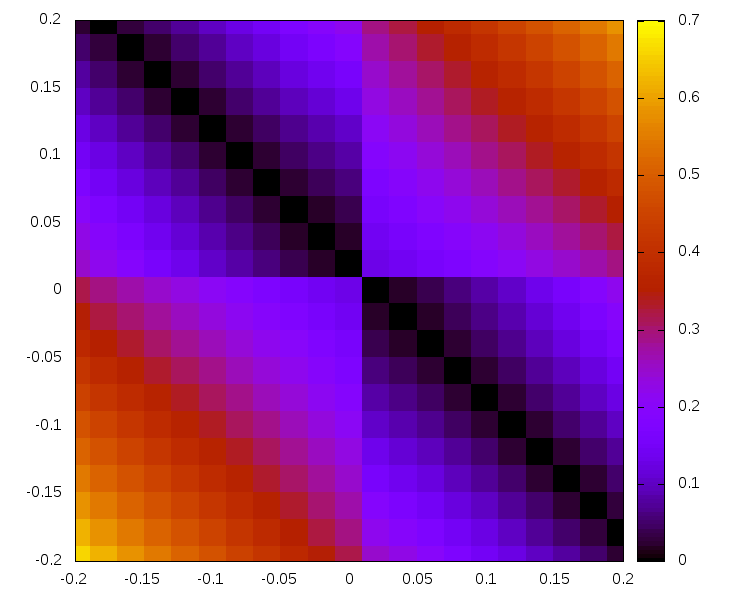}\hfill
\includegraphics[width=0.32\textwidth]{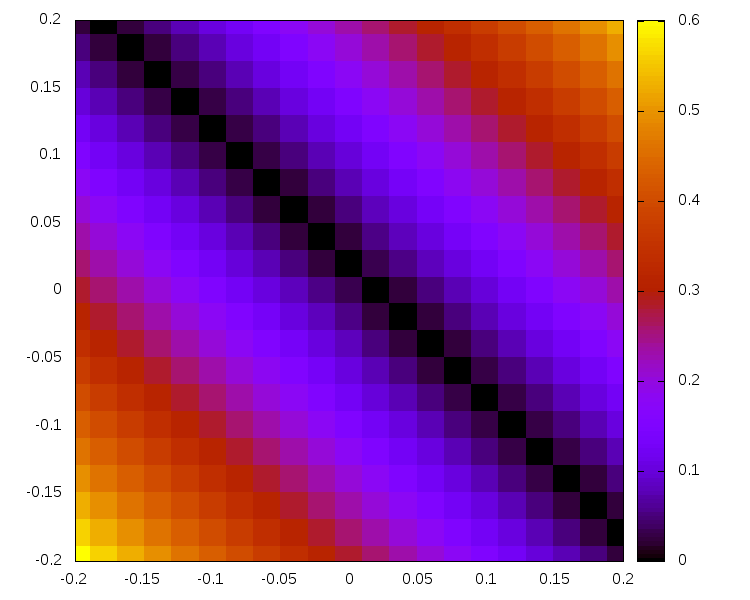}
\caption{{{\sl Left}: The pressure at $ct = 0.5$ for the Riemann problem from \cite{amadori2015} shown along the diagonal $x = y$ of the grid. The result obtained with the multi-dimensional scheme \eqref{eq:riemannscheme-u}--\eqref{eq:riemannscheme-p} with $\cfl\!=0.98$ is compared to that of the dimensionally split scheme with $\cfl\!=0.45$. For the latter one observes an unphysical jump and excessive smoothing. {\sl Center:} The jumps are visible in the two dimensional plot as well. Color coded is the absolute value $|p|$ for better visibility of the artefact. {\sl Right}: The same region computed with the multi-dimensional Godunov scheme.}}
\label{fig:riemanngossecenter}
\end{figure}

}

\subsubsection{Low Mach number vortex}

The second test shows the properties of the scheme in the limit $\epsilon \to 0$. The setup is that of a stationary, divergencefree velocity field and constant pressure:
\begin{align*}
 p_0(\vec x) &= 1\\
 \vec v_0(\vec x) &= \vec e_\phi \begin{cases} \frac{r}{r_0} & r < r_0 \\ 2-\frac{r}{r_0} & r_0 \leq r < 2 r_0 \\ 0 & \text{else} \end{cases}
\end{align*}
The velocity thus has a compact support, which is entirely contained in the computational domain, discretized by $51 \times 51$ square cells. Here $c=1$ and $r_0=0.2$. Zero-gradient boundaries are enforced. Results of a simulation are shown in Figure \ref{fig:riemannschemevortex}.

\begin{figure}[h]
 \centering
 \includegraphics[width=0.32\textwidth]{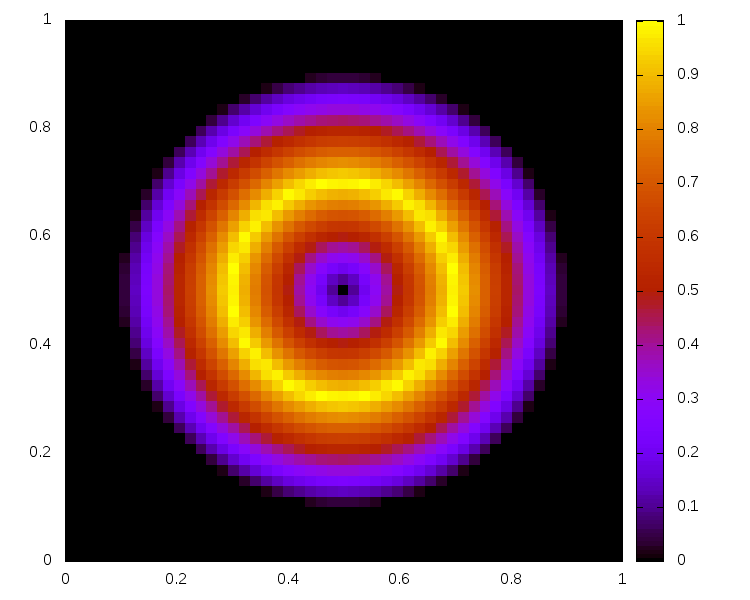}
\includegraphics[width=0.32\textwidth]{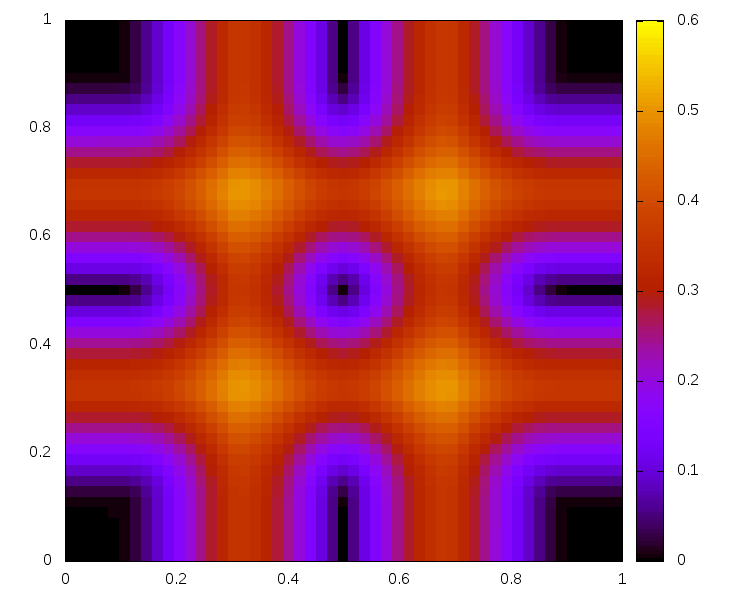}
\includegraphics[width=0.32\textwidth]{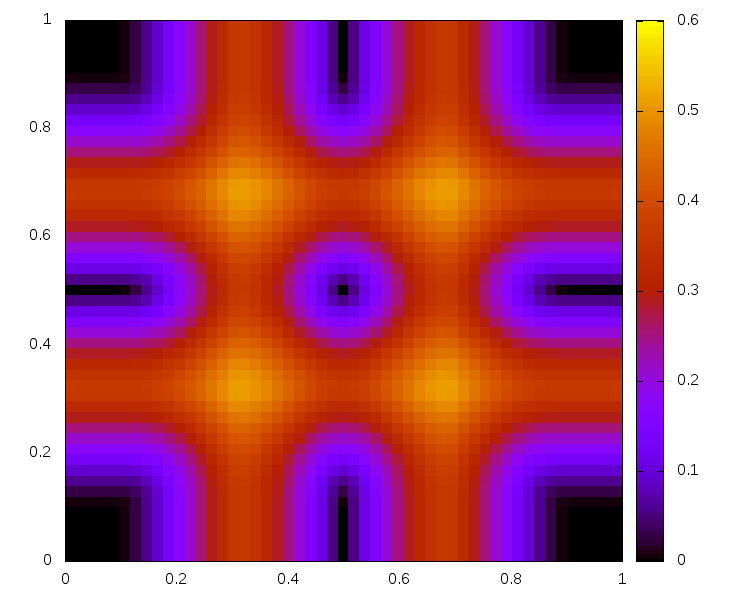}
\caption{Solution of the vortex initial data at time $ct = 1$ for $\epsilon = 10^{-2}$. The quantity shown in colour is the magnitude $|\vec v|$ of the velocity. {\sl Left}: Exact solution $=$ initial data. {\sl Center}: Multi-dimensional Godunov scheme \eqref{eq:riemannscheme-u}--\eqref{eq:riemannscheme-p} with \cfl = 0.8. {\sl Right}: Dimensionally split scheme with \cfl = 0.45.}
\label{fig:riemannschemevortex}
\end{figure}

The dimensionally split solver is known to display artefacts in the limit $\epsilon \to 0$ (see e.g. \cite{guillard99,barsukow17a}). {For comparison, results obtained with the dimensionally split scheme are shown in the same Figure \ref{fig:riemannschemevortex}. However, as the dimensionally split scheme is only stable up to $\cfl\! = 0.5$, it is not possible to run the setup with the same $\cfl\!$ number. 

Knowing that the dimensionally split scheme fails to resolve the low Mach number limit, a visual comparison of the two results indicates that the multi-dimensional Godunov scheme is equally unable to resolve it. This can be theoretically confirmed. In \cite{barsukow17a}, a strategy has been presented how low Mach compliance can be checked theoretically for linear schemes for acoustics. Therein the concept of \emph{stationarity preservation} has been introduced. A scheme is called stationarity preserving if it discretizes the entire set of the analytic stationary states. It is found that most schemes add so much diffusion, that only trivial (e.g. spatially constant) stationary states are not diffused away. It is moreover found that the low Mach number limit for linear acoustics is equivalent to the limit of long times. In order to study the limit $\epsilon \to 0$ one therefore needs to analyze the long time behaviour of the numerical solutions. Here, the most prominent role is played by the stationary states. Only if they are discretized correctly will the scheme be low Mach compliant. In \cite{barsukow17a} a condition has been found, which involves the so-called \emph{evolution matrix}: The scheme is stationarity preserving if the determinant of its evolution matrix vanishes. Given a numerical scheme, its evolution matrix can be easily constructed. For more details the reader is referred to \cite{barsukow17a} or \cite{barsukow17lilleproceeding}. For the multi-dimensional Godunov scheme under consideration it can be verified by explicit calculation that the evolution matrix fails to meet the condition of stationarity preservation. The computation is lengthy, and is thus not reproduced here. The inability of the multi-dimensional Godunov scheme to resolve the limit of low Mach number, however, thus can also be understood theoretically.

The multi-dimensional Godunov scheme is not low Mach compliant. However, no approximations were made in the evolution step of the scheme. It thus becomes obvious that the low Mach number problem is not cured by just taking into account all the multi-dimensional interactions. As the evolution step was exact, the reason for the failure is to be sought elsewhere. {The only approximation used in this scheme is the choice of a piecewise constant reconstruction.}. {A number of ``fixes'' have been suggested for the low Mach number regime of the dimensionally split Roe scheme (\cite{thornber08,dellacherie10,chalons13,li13,birken16,barsukow16}). They might in principle be used here as well, but this would imply going back to an approximate evolution operator, and they shall not be considered here.} 

{The result of this section implies that for deriving low Mach number numerical schemes either an approach different from the Godunov procedure needs to be found, or that no first-principles derivation is available, but only ``fixes''.}
}

\section{Conclusions and outlook}
\label{sec:outlook}

This paper presents an exact solution of the acoustic equations in three spatial dimensions. It is a paramount example of a solution operator that is very different to that of multi-dimensional linear advection: characteristic cones and spherical means replace simple transport along a one-dimensional characteristic. On the other hand it also shows differences to the well-understood scalar wave equation, thus emphasizing the additional difficulties when dealing with systems of equations. The solution obtained is a distributional one, which allows to use discontinuous initial data. 

The multi-dimensional Riemann Problem for the acoustic equations is an example for the appearance of a singularity in the solution, which is an intrinsically multi-dimensional feature. This paper presents the exact shape of the solution and shows that the singularity is logarithmic.

Furthermore, using the exact solution formulae, a multi-dimensional Godunov scheme is obtained. It has been found in experiments that, as expected, its stability region extends right up to the maximum allowed physical \cfl number, which is twice what is possible with a dimensionally split scheme. 
The method has been shown to perform well when applied to multi-dimensional Riemann Problems, not suffering from the artefacts found for dimensionally split schemes. It, however, does not allow calculations in the limit of low Mach numbers in general. It fails to resolve the limit even though the evolution step of the Godunov scheme is exact. 

All the necessary results for the derivation of the numerical scheme could be obtained analytically. This shows that the exact solution operator can be efficiently used in such circumstances. The exact evolution operator may be an important ingredient in order to endow the numerical scheme with certain, desirable properties. Future work will try to generalize aspects of these findings to the full Euler equations. We hope that the study of linear acoustics in multiple spatial dimensions is a first step in this direction.

\section*{Acknowledgement}

The authors thank Philip L. Roe and Praveen Chandrashekar for inspiring discussions and the anonymous Referee whose comments have helped improving the manuscript. WB acknowledges support from the German National Academic Foundation and thanks Marlies Pirner for valuable comments.

\newcommand{\etalchar}[1]{$^{#1}$}

\appendix

\section{Distributions} \label{ssec:distributions}

In order to use discontinuous initial data for the equations of linear acoustics in multiple spatial dimensions one needs distributional solutions. This is not necessary in one spatial dimension. In multiple spatial dimensions, however, the solution formula involves derivatives of the initial data. If the data are discontinuous, the solution formula thus needs to be generalized in the sense of distributions.

In this Section a brief review of definitions and results from the theory of distributions is given. This is done in order to present the notation that will be used. Therefore many standard results are stated without proofs. The reader interested in a thorough introduction is, for example, referred to \cite{schwartz78,gelfand64,hormander13,rudin91}. 

In Section \ref{sec:distributionderivation} distributional solutions of linear acoustics in three spatial dimensions are derived.

\begin{definition}[Distribution] \label{def:distribution}
 A \emph{distribution} is a continuous linear functional on the set $D(\mathbb R^d)$ of compactly supported smooth test functions $\test$. The evaluation of the distribution $f$ on a test function $\test$ is denoted by $\langle f | \test\rangle \in \mathbb R$ (or $\mathbb C$). The set of all distributions is denoted by $D'(\mathbb R^d)$.
\end{definition}

It is possible to show (see e.g. \cite{rudin91}) that a function $h \in L^1_\text{loc}(\mathbb R^d)$ gives rise to a distribution $\regdist{h} \in D'(\mathbb R^d)$ in the following way: the action $\langle \regdist{h} | \test \rangle$ of $\regdist{h}$ onto any test function $\test \in D(\mathbb R^d)$ is defined as:
 \begin{align}
  \langle \regdist{h} | \test \rangle := \int_{\mathbb R^d} \dd \vec x\, h(\vec x)\test(\vec x)  \label{eq:regdist}
 \end{align}

\begin{definition}[Regular distribution]
 Given $h \in L^1_\text{loc}(\mathbb R^d)$, the distribution $\regdist{h}$, defined by its action onto a test function $\test \in D(\mathbb R^d)$ as in \eqref{eq:regdist}, is called \emph{regular distribution}.
\end{definition}

In order to make explicit the independent variable, the notation $\langle \regdist{h} | \test(\vec x) \rangle$ will be used. Two regular distributions $\regdist{h_1}$ and $\regdist{h_2}$ are equal, if $h_1 = h_2$ almost everywhere.

\begin{definition}[Tempered distribution]
 The \emph{Schwartz space} $S(\mathbb R^d)$ of rapidly decreasing functions $f$ on $\mathbb R^d$ is defined as 
 \begin{align*}
  S(\mathbb R^d) := \Big \{ f \in C^\infty(\mathbb R^d) : \sup_{\vec x \in \mathbb R^d} |x_1^{a_1} \dots x_d^{a_2} \del_{x_1}^{b_1} \cdots \del_{x_d}^{b_d} f | < \infty \nonumber\\ \forall (a_1, \ldots, a_d, b_1, \ldots, b_d) \in \mathbb (\mathbb N_0)^{2d} \Big \}
 \end{align*}
 The set $S'(\mathbb R^d)$ of \emph{tempered distributions} is the continuous dual of $S(\mathbb R^d)$.
\end{definition}

It is possible to show that the derivative $\nabla_{\vec x} T$ of a distribution $T \in D'(\mathbb R^d)$, defined in the following, is again a distribution (see e.g. \cite{rudin91}).

\begin{definition} \label{def:fourier}
 \begin{enumerate}[i)]
  \item The derivative $\nabla_{\vec x} T$ of a distribution $T \in D'(\mathbb R^d)$ is defined as $$\langle \nabla_{\vec x} T | \test(\vec x) \rangle := - \langle T | \nabla_{\vec x} \test(\vec x) \rangle \qquad \forall \test \in D(\mathbb R^d)$$
  \item The Fourier transform {$\mathbb F_{\vec x}$} applied to an integrable function {$f : \mathbb R^d \to \mathbb R$} is defined by
  \begin{align*}
   \hat f(\vec k) := \mathbb F_{\vec x}[f](\vec k) :=
   \frac{1}{(2 \pi)^{d/2}} \int \dd \vec x \exp(
   \ii \vec k \cdot \vec x) f(\vec x)
  \end{align*}
   Generically, $\vec k$ denotes the dual variable to $\vec x$.
   Note the symmetric prefactor convention chosen here.
   Also, generically, the hat denotes a Fourier transform in the following.
  \item The Fourier transform {$\mathbb F_{\vec x}[T](\vec k)$} of a distribution {$T \in S'(\mathbb R^d)$} is defined by
  \begin{align*}
  \langle \mathbb F[T]| \test \rangle :=  \langle T| \mathbb F[\test] \rangle\qquad \forall \test \in {S(\mathbb R^{d})}
  \end{align*}
  or, making explicit the independent variables,
  \begin{align*} \langle \mathbb F_{\vec x}[T](\vec k) | \test(\vec k) \rangle :=  \langle T(\vec x) | \mathbb F_{\vec k}[\test](\vec x) \rangle\qquad \forall \test \in S(\mathbb R^{d}) \end{align*}
 \end{enumerate}
\end{definition}

The class ${S(\mathbb R^{d})}$ allows to put the Fourier transform to maximal use:
\begin{theorem} \label{thm:fourierautom}
 The Fourier transform is an automorphism on ${S'(\mathbb R^{d})}$.
\end{theorem}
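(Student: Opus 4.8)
The statement is the distributional version of a classical fact, and the whole weight of the proof sits on the \emph{Schwartz space}. Since the action of $\mathbb F$ on $S'(\mathbb R^d)$ is defined in Definition \ref{def:fourier} iii) purely by transposition, $\langle \mathbb F[T] | \test \rangle := \langle T | \mathbb F[\test]\rangle$, the plan is to first prove that $\mathbb F$ is a topological automorphism of the test space $S(\mathbb R^d)$ — a continuous linear bijection onto itself whose inverse is again continuous — and then to transfer this to $S'(\mathbb R^d)$ by a formal duality argument. Everything hinges on establishing the automorphism property on $S$; the passage to $S'$ is essentially bookkeeping.

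First I would show that $\mathbb F$ maps $S(\mathbb R^d)$ into itself continuously. The tools are the two intertwining identities, obtained for $f\in S$ by integration by parts and by differentiation under the integral sign (both legitimate thanks to the rapid decay defining $S$):
\begin{align*}
 \mathbb F[\del_{x_j} f](\vec k) = -\ii\, k_j\, \hat f(\vec k), \qquad \del_{k_j} \hat f(\vec k) = \mathbb F[\ii\, x_j f](\vec k).
\end{align*}
Iterating these, a monomial-times-derivative expression $\vec k^{\,\alpha} \del_{\vec k}^{\,\beta} \hat f$ equals, up to a constant of modulus one, the Fourier transform of $\del_{\vec x}^{\,\alpha}(\vec x^{\,\beta} f)$. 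Combined with the elementary bound $\|\mathbb F[g]\|_\infty \le (2\pi)^{-d/2} \|g\|_{L^1}$, this estimates every Schwartz seminorm of $\hat f$ by an $L^1$-norm of a derivative of a polynomial multiple of $f$, which is finite because $f\in S$. Hence $\hat f \in S$ and $\mathbb F : S(\mathbb R^d) \to S(\mathbb R^d)$ is continuous.

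The decisive — and hardest — step is the Fourier inversion theorem on $S$, namely that $\mathbb F^{-1}\mathbb F = \mathrm{id}$ with $\mathbb F^{-1}[g](\vec x) = (2\pi)^{-d/2}\int \dd\vec k\, \exp(-\ii\, \vec k\cdot \vec x)\, g(\vec k)$. The clean route is Gaussian regularisation: insert a factor $\exp(-\epsilon |\vec k|^2)$ to make the double integral absolutely convergent, apply Fubini, use the self-reciprocity of Gaussians under $\mathbb F$ (with the present symmetric convention, $\mathbb F[\exp(-a|\vec x|^2)](\vec k) = (2a)^{-d/2}\exp(-|\vec k|^2/(4a))$), and then let $\epsilon \to 0^+$ by dominated convergence. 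This yields bijectivity of $\mathbb F$ on $S$ and identifies $\mathbb F^{-1}$; the same seminorm estimates as above show $\mathbb F^{-1}$ is continuous, so $\mathbb F$ is a topological automorphism of $S(\mathbb R^d)$.

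Finally I would transfer the result to $S'(\mathbb R^d)$. Define $\mathbb F^{-1}$ on $S'$ by transposition of $\mathbb F^{-1}$ on $S$, i.e. $\langle \mathbb F^{-1}[T] | \test\rangle := \langle T | \mathbb F^{-1}[\test]\rangle$. Then for every $T\in S'(\mathbb R^d)$ and every $\test \in S(\mathbb R^d)$,
\begin{align*}
 \langle \mathbb F^{-1}\mathbb F[T] | \test\rangle = \langle \mathbb F[T] | \mathbb F^{-1}[\test]\rangle = \langle T | \mathbb F\mathbb F^{-1}[\test]\rangle = \langle T | \test\rangle,
\end{align*}
using the inversion identity on test functions, and symmetrically $\mathbb F\mathbb F^{-1} = \mathrm{id}$ on $S'$. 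Thus $\mathbb F$ is a bijection of $S'(\mathbb R^d)$; its continuity, and that of its inverse, for the dual topology is automatic, being the transpose of a continuous linear map. The main obstacle is the inversion theorem on $S$ of the previous paragraph — the intertwining estimates are routine and the $S'$ step is purely formal.
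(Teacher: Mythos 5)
Your proof is correct. Note that the paper does not actually prove this theorem at all --- it simply writes ``For a proof see e.g.\ \cite{rudin91}'' --- and your argument (establishing that $\mathbb F$ is a topological automorphism of $S(\mathbb R^{d})$ via the intertwining identities and Gaussian-regularized inversion, then transferring to $S'(\mathbb R^{d})$ by transposition) is precisely the standard proof found in that reference, with the intertwining signs correctly adapted to the paper's $+\ii$ forward-transform convention; you have supplied exactly the details the paper delegates.
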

\noindent
For a proof see e.g. \cite{rudin91}. The usual rules of differentiation apply:

\begin{theorem} 
Consider a distribution $q \in S'(\mathbb R^d)$ and its Fourier transform $\hat q$. Then 
\begin{enumerate}[i)]
  \item \begin{align} \nabla_{\vec x} \mathbb F^{-1}_{\vec k}[\hat q(\vec k)] =  F^{-1}_{\vec k}[\ii \vec k \hat q(\vec k)] \label{example-diffdistr}\end{align}
  \item \begin{align}
  \mathbb F_{\vec k}^{-1}[\nabla_{\vec k} \hat q]  = -\ii \vec x \mathbb F_{\vec k}^{-1}[\hat q] \label{eq:fourier-internal-derivative} \\
  \mathbb F_{\vec x}[\nabla_{\vec x} q] = \ii \vec k \mathbb F_{\vec x}[q] \label{eq:fourierderiv}
  \end{align}
\end{enumerate}
\end{theorem}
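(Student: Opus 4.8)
The plan is to establish each of the three relations first at the level of Schwartz test functions by elementary calculus, and then to transfer them to tempered distributions by duality, using that the Fourier transform is an automorphism of $S'(\mathbb{R}^d)$ (Theorem~\ref{thm:fourierautom}). All three displayed identities are really one computation seen from three angles, so I would isolate a single master identity on $S(\mathbb{R}^d)$ and then read off the distributional versions mechanically from the definitions in Definition~\ref{def:fourier}.

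First I would prove, for an arbitrary $\test \in S(\mathbb{R}^d)$, the two pointwise identities $\nabla_{\vec x}\,\mathbb F_{\vec k}[\test](\vec x) = \mathbb F_{\vec k}[\ii\vec k\,\test](\vec x)$ and $\nabla_{\vec k}\,\mathbb F_{\vec x}[\test](\vec k) = \mathbb F_{\vec x}[\ii\vec x\,\test](\vec k)$. The first follows by differentiating under the integral sign in the defining integral of Definition~\ref{def:fourier}, since $\del_{\vec x}\exp(\ii\vec k\cdot\vec x) = \ii\vec k\exp(\ii\vec k\cdot\vec x)$; the interchange of $\nabla$ and $\int$ is legitimate because $\vec k\mapsto \vec k\,\test(\vec k)$ is again Schwartz, hence integrable, and the difference quotients are dominated uniformly. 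The second identity is obtained the same way (or, equivalently, by integration by parts, where all boundary terms vanish because $\test$ and every derivative decay faster than any polynomial). Multiplication by the coordinate polynomials $\vec x,\vec k$ maps $S$ into $S$, so both sides remain Schwartz functions throughout.

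Next I would dualize. To obtain \eqref{eq:fourierderiv}, I would test against an arbitrary $\test\in S(\mathbb{R}^d)$ and chain the definitions: $\langle \mathbb F_{\vec x}[\nabla_{\vec x}q]\,|\,\test\rangle = \langle \nabla_{\vec x}q\,|\,\mathbb F_{\vec k}[\test]\rangle = -\langle q\,|\,\nabla_{\vec x}\mathbb F_{\vec k}[\test]\rangle$, using the distributional derivative and the distributional Fourier transform of Definition~\ref{def:fourier}. Inserting the first Schwartz-level identity turns the right-hand side into $-\langle q\,|\,\mathbb F_{\vec k}[\ii\vec k\,\test]\rangle = -\langle \mathbb F_{\vec x}[q]\,|\,\ii\vec k\,\test\rangle$; moving the polynomial multiplier onto the other slot identifies $\mathbb F_{\vec x}[\nabla_{\vec x}q]$ with a multiple of $\ii\vec k\,\mathbb F_{\vec x}[q]$, the precise sign being the one dictated by the $\exp(+\ii\vec k\cdot\vec x)$ convention, and this holds for every $\test$ and hence as an identity in $S'$. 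The relations \eqref{example-diffdistr} and \eqref{eq:fourier-internal-derivative} follow by the identical argument with $\mathbb F^{-1}$ in place of $\mathbb F$ — available precisely because Theorem~\ref{thm:fourierautom} guarantees $\mathbb F^{-1}$ is a well-defined automorphism of $S'$ — the second of them using the second Schwartz-level identity above.

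The computation is entirely routine; the only genuine point requiring care — and the reason the statement is phrased for \emph{tempered} rather than general distributions — is well-definedness: multiplication by the unbounded polynomials $\vec x$ and $\vec k$ must send test functions back into a space on which the pairing makes sense. This fails on $D'(\mathbb{R}^d)$ but holds on $S'(\mathbb{R}^d)$, because $S(\mathbb{R}^d)$ is stable both under multiplication by polynomials and under the Fourier transform. I would therefore state at the outset that every manipulation stays within $S$ and $S'$, after which the sign tracking under the chosen convention is the last remaining detail.
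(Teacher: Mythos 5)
Your proof is correct and takes essentially the same route as the paper's: both arguments dualize, moving the distributional derivative and the Fourier transform onto the Schwartz test function and then invoking the elementary test-function identity obtained by differentiating under the integral sign (equivalently, integrating by parts); you merely make explicit the analytic details and the stability of $S(\mathbb{R}^d)$ under multiplication by polynomials, which the paper leaves implicit. Your reluctance to commit to a sign is in fact well-founded: the signs asserted in the theorem correspond to a forward kernel $\exp(-\ii\vec k\cdot\vec x)$ --- the convention the paper actually employs in its solution formulae \eqref{eq:solutionfourierpfunc}--\eqref{eq:solutionfouriervfunc} --- and not the kernel $\exp(+\ii\vec k\cdot\vec x)$ literally written in Definition \ref{def:fourier}, an inconsistency that the paper's own proof glosses over by asserting the final equality without tracking the integration-by-parts sign.
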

\begin{proof}
\begin{enumerate}[i)]
\item For every test function $\test \in S(\mathbb R^d)$
\begin{align*}
 \langle \nabla_{\vec x} \mathbb F^{-1}_{\vec k}[\hat q(\vec k)] | \test(\vec x) \rangle 
 = -\langle \hat q(\vec k) | \mathbb F^{-1}_{\vec x}[\nabla_{\vec x} \test(\vec x)] \rangle  
 = \langle  \mathbb F^{-1}_{\vec k}[\ii \vec k \hat q(\vec k)] |  \test \rangle 
\end{align*}
\item Analogously, 
\begin{align*}
 \langle \mathbb F_{\vec k}^{-1}[\nabla_{\vec k} \hat q(\vec k)] | \test(\vec x) \rangle &= \langle \nabla_{\vec k} \hat q(\vec k) | \mathbb F_{\vec x}^{-1}[\test] \rangle =- \langle \hat q(\vec k) | \nabla_{\vec k}  \mathbb F_{\vec x}^{-1}[\test] \rangle \\
 &=- \langle \hat q(\vec k) | \mathbb F_{\vec x}^{-1}[\ii \vec x \test] \rangle = \langle -\ii \vec x\mathbb F_{\vec k}^{-1}[\hat q(\vec k)] |  \test(\vec x) \rangle 
\end{align*}
The other equation is shown by repeating the argumentation for $\mathbb F_{\vec x}[q]$.
\end{enumerate}
\end{proof}

The Fourier transform of 1 is (up to factors) the Dirac distribution $\delta_0$:
\begin{definition}[Dirac distribution]
The Dirac distribution $\delta_{\vec x'}$, or $\delta_{\vec x=\vec x'}$, is defined as $\langle\delta_{\vec x'}|\test\rangle := \test(\vec x')$ $\forall \test \in D(\mathbb R^d)$.
\end{definition}

For distributional (generalized) solutions to partial differential equations see e.g. \cite{john78,rauch91,taylor96,zuily02}.

\begin{definition}[Distributional solution]
 Consider a first order linear differential operator $\mathcal L$ {with constant coefficients} containing derivatives with respect to $t \in {\mathbb R^+_0}$ and $\vec x \in \mathbb R^d$. Then $q \in C^1({\mathbb R^+_0},D'(\mathbb R^d))$ is called a \emph{distributional solution} of the initial value problem $\mathcal L q = 0$ {with initial data $q_0 \in D'(\mathbb R^{d})$} if $\mathcal L q = 0$ holds for every $t$ as an identity in {$D'(\mathbb R^{d})$}; i.e. if $\langle (\mathcal L q)(t) | \test \rangle = 0$ $\forall \test \in {D(\mathbb R^{d})}$ $\forall t$ and $q(0) = q_0$. 
\end{definition}

This definition extends analogously to systems of PDEs.

In general in the following the solution will not be a function, but the initial data $q_0$ will. If the initial data are locally integrable functions, then in the context of a \emph{distributional} initial value problem they are to be interpreted as regular distributions $\regdist{q_0}$.

The convolution $F * G$ of two distributions $F$ and $G$ can be defined in certain cases. Here only the following definition is needed, and the reader is referred to e.g. \cite{rudin91} for further details.

\begin{definition}[Convolution] \label{def:convolution}
 The convolution $F * G$ of $F,G \in D'(\mathbb R^d)$, with at least one of them having compact support, is defined $\forall \test \in D(\mathbb R^d)$ as
 
 \begin{align*}
  \langle (F * G)(\vec x) | \test(\vec x) \rangle = \Big\langle F(\vec x) | \langle G(\vec y) | \test(\vec x + \vec y) \rangle \Big\rangle
 \end{align*}
\end{definition}

If $F$ and $G$ are regular distributions, i.e. $F = \regdist{f}$, $G = \regdist{g}$, with $f, g$ having compact support, then
\begin{align*}
 \langle \regdist{f} * \regdist{g} | \test \rangle &= 
 \int \dd x f(x) \int \dd y g(y) \test(x+y) 
 = \int \dd \xi \left( \int \dd y f(\xi - y) g(y) \right )\test(\xi)
\end{align*}

It can be shown that $\delta_0$ acts as the identity upon convolution, and $\delta_{\vec x'}$ as translation by $\vec x'$. For $F, G \in S'(\mathbb R^d)$ and at least one of them compactly supported, the product of Fourier transforms is the Fourier transform of the convolution:
\begin{align*}
 \mathbb F_{\vec x}[F](\vec k) \cdot \mathbb F_{\vec x}[G](\vec k) &= \frac{1}{(2\pi)^{d/2}} \mathbb F_{\vec x}[F * G](\vec k)
\end{align*}

{As will be seen below, the spherical symmetry of the exact solution for the acoustic equations manifests itself in the appearance of $|\vec k|$ in the Fourier transforms. This gives rise to some very special distributions which are discussed next.

\begin{definition}[Radial Dirac distribution and step function] \label{def:radialdelta}
 Choose $r \in \mathbb R^+$ and $d \in \mathbb N^+$.
 \begin{enumerate}[i)]
 \item The \emph{radial Dirac distribution} $\delta_{|\vec x|=r}$ is defined as
\begin{align*}
 \Big\langle \delta_{|\vec x|=r} |\, \test(\vec x)\Big\rangle := \int_{S_r^{d-1}} \!\! \dd \vec x \, \test(\vec x) \qquad \forall \test \in D(\mathbb R^d)
\end{align*}
 \item Define the characteristic function $\Theta_{|\vec x|\leq r}$ of the ball $B_r^{d}$
 { \begin{align*}
  \Theta_{|\vec x|\leq r} := \begin{cases} 1 & \vec x \in B_r^d \\ 0 & \text{else}  \end{cases}
 \end{align*} }
\end{enumerate}
\end{definition}

\begin{definition}[Spherical average]\label{def:sphericalaverage}
In three spatial dimensions, the spherical average at a radius $r$ of a distribution $T$ is given by $$\frac{1}{4\pi} \frac{\delta_{|\vec x|=r}}{r^2} * T$$
\end{definition}
If $T$ is a regular distribution $T = \regdist{f}$, then by Definition \ref{def:convolution} $\forall \test \in S(\mathbb R^3)$
\begin{align*}
 \frac{1}{4\pi} \left \langle \frac{\delta_{|\vec x|=r}}{r^2} * T \Big | \test \right \rangle &= \frac{1}{4\pi} \frac{1}{r^2} \int_{S^2_r} \dd \vec x \int \dd \vec y f(\vec y) \test(\vec x + \vec y)\\
 &= \left \langle \regdist{  \frac{1}{4 \pi}  \int_{S^2_1} \dd \vec y f(\vec x + r \vec y) }\Big |  \test(\vec x) \right \rangle
 \end{align*}
Here, $\int_{S^2_1} \dd \vec y $ denotes an integration over the surface of a 2-sphere of radius 1, i.e. in spherical polar coordinates this amounts to
\begin{align*}
 \int_0^\pi \dd \theta \sin \theta \int_0^{2\pi} \dd \phi
\end{align*}

{For more details on properties of spherical averages, see \cite{john81}.}

\begin{theorem}[Radial Dirac distribution] \label{thm:deriavtiveradialstep}
 The derivative of the radial step function $\Theta_{|\vec x| \leq r}$ is closely related to the radial Dirac distribution:
 \begin{align*}
  -\delta_{|\vec x|=r} \frac{\vec x}{|\vec x|} &= \nabla \regdist{\Theta_{|\vec x| \leq r}}\\
  \intertext{which can, by defining $\del_r := \frac{\vec x}{|\vec x|} \cdot \nabla$ be rewritten as}
  -\delta_{|\vec x|=r} &= \del_r \regdist{\Theta_{|\vec x| \leq r} }
 \end{align*}
\end{theorem}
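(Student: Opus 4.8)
The plan is to compute the distributional gradient of $\regdist{\Theta_{|\vec x|\leq r}}$ directly from the definition and to identify the result as a surface integral over $S_r^{d-1}$ by Gauss' theorem. By the definition of the distributional derivative (Definition \ref{def:fourier} i), applied componentwise), for every $\test \in D(\mathbb R^d)$ one has
\[
\langle \nabla \regdist{\Theta_{|\vec x|\leq r}} \,|\, \test \rangle = -\langle \regdist{\Theta_{|\vec x|\leq r}} \,|\, \nabla \test \rangle = -\int_{B_r^d} \nabla \test(\vec x)\,\dd \vec x .
\]
Here the integrand is the gradient of a smooth, compactly supported function integrated over the ball $B_r^d$, whose boundary is the smooth sphere $S_r^{d-1}$. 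First I would invoke the divergence theorem in its gradient form (i.e.\ Gauss' theorem applied to $\test \mathbf e_i$ in each coordinate) to convert this volume integral into a surface integral, producing the outward unit normal $\vec n = \vec x/|\vec x|$ on $S_r^{d-1}$:
\[
-\int_{B_r^d}\nabla \test\,\dd\vec x = -\int_{S_r^{d-1}}\frac{\vec x}{|\vec x|}\,\test(\vec x)\,\dd\vec x .
\]

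Next I would compare this with the right-hand side of the claimed identity. The radial Dirac distribution of Definition \ref{def:radialdelta} i) is supported on $S_r^{d-1}$, on which the field $\vec x/|\vec x|$ is smooth; the product $\delta_{|\vec x|=r}\,\frac{\vec x}{|\vec x|}$ is therefore a well-defined vector-valued distribution, acting by multiplication-by-a-smooth-function, $\langle g T \,|\,\test\rangle = \langle T \,|\, g\test\rangle$, so that
\[
\Big\langle -\delta_{|\vec x|=r}\,\frac{\vec x}{|\vec x|}\,\Big|\,\test\Big\rangle = -\Big\langle \delta_{|\vec x|=r}\,\Big|\,\frac{\vec x}{|\vec x|}\test\Big\rangle = -\int_{S_r^{d-1}}\frac{\vec x}{|\vec x|}\,\test(\vec x)\,\dd\vec x .
\]
This coincides with the expression obtained from the gradient; since $\test$ was arbitrary, the two distributions are equal, which is the first identity. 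For the second identity I would contract the (vector) first identity with $\vec x/|\vec x|$, i.e.\ apply the scalar operator $\del_r = \frac{\vec x}{|\vec x|}\cdot\nabla$. On the left this yields $\del_r \regdist{\Theta_{|\vec x|\leq r}}$ by definition, while on the right contraction gives $-\delta_{|\vec x|=r}\,\frac{\vec x\cdot\vec x}{|\vec x|^2}=-\delta_{|\vec x|=r}$, using that $|\vec x/|\vec x||=1$ on the support $|\vec x|=r$.

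The substantive content of the argument is the divergence-theorem step, which is entirely standard for a smooth compactly supported $\test$ and a ball with smooth boundary. The only point requiring a word of care is the legitimacy of the two multiplications by the field $\vec x/|\vec x|$, which is singular at the origin; this is harmless because every distribution appearing is supported on $S_r^{d-1}$ with $r>0$, away from the origin, so one may multiply by any function that is smooth in a neighbourhood of that support. I therefore expect no genuine obstacle, only the bookkeeping of reducing the vector identity to its radial component.
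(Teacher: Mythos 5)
Your proof is correct and follows essentially the same route as the paper's: both arguments hinge on Gauss' theorem to identify $-\int_{B_r^d}\nabla\test\,\dd\vec x$ with the surface integral $-\int_{S_r^{d-1}}\frac{\vec x}{|\vec x|}\test\,\dd\vec x$, and then obtain the scalar identity by contracting with $\vec n = \vec x/|\vec x|$ (the paper's ``multiplying through with $\vec n$''). You merely run the computation in the opposite direction (from the gradient side to the delta side) and add the worthwhile observation that multiplication by $\vec x/|\vec x|$ is legitimate because it is smooth in a neighbourhood of the support $S_r^{d-1}$ with $r>0$.
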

\begin{proof}
 Recall the definition of a ball $B^{d+1}_r = \{ \vec x \in \mathbb R^{d+1} : |\vec x| \leq r \}$ and abbreviate $\vec n := \frac{\vec x}{|\vec x|}$. Use Definition \ref{def:radialdelta} and Gauss' Theorem, for any $\test \in D(\mathbb R^d)$:
 \begin{align*}
  -\langle \delta_{|\vec x|=r} \vec n | \test \rangle &= - \int_{S^d_r} {\dd \vec y \frac{\vec y}{|\vec y|}} \cdot \test  = - \int_{B^{d+1}_r} \dd \vec x \, \nabla \cdot \test = - \langle \regdist{\Theta_{|\vec x|\leq r}} | \nabla \test \rangle \\&= \langle \nabla \regdist{\Theta_{|\vec x| \leq r}} | \test \rangle
 \end{align*}
 Multiplying through with $\vec n$ proves the assertion.
\end{proof}

\begin{theorem}[Fourier transforms] \label{thm:fourierradial}
 \begin{enumerate}[i)]
 \item In three spatial dimensions, given $r \in \mathbb R^+$, $\vec k \in \mathbb R^3$, 
 \begin{align*}
  \int_{S_r^2} \dd \vec x \,\exp(\ii \vec k \cdot \vec x) = 4 \pi r^2 \frac{\sin(|\vec k|r)}{|\vec k|r}
 \end{align*} \label{it:lemma1}
 \item In three spatial dimensions, the Fourier transform of the radial Dirac distribution $\delta_{|\vec x|=r}$ 
 is given by
 \begin{align*}
  \mathbb F_{\vec x}[\delta_{|\vec x|=r}](\vec k) = \frac{2}{(2 \pi)^{1/2}}    r^2 \frac{\sin (|\vec k|r)}{|\vec k|r}
 \end{align*}
 \item In three spatial dimensions, the Fourier transform of $\Theta_{|\vec x|\leq r} \frac{1}{|\vec x|}$ is given by
 \begin{align*}
  \mathbb F_{\vec x}\left[ \frac{\Theta_{|\vec x|\leq r}}{|\vec x|}\right](\vec k) = -\frac{2}{(2 \pi)^{1/2} } \frac{\cos (|\vec k| r) - 1}{|\vec k|^2}
 \end{align*}
 \end{enumerate}
\end{theorem}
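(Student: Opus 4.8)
The plan is to prove part (i) by an elementary direct computation and then to bootstrap (ii) and (iii) from it using the distributional definition of the Fourier transform together with a shell decomposition. For (i), I would exploit rotational invariance: since $\int_{S_r^2}\dd\vec x\,\exp(\ii\vec k\cdot\vec x)$ depends on $\vec k$ only through $|\vec k|$, I align $\vec k$ with the polar axis so that $\vec k\cdot\vec x=|\vec k|r\cos\theta$ on $S_r^2$. With surface element $r^2\sin\theta\,\dd\theta\,\dd\phi$, the $\phi$-integration contributes a factor $2\pi$ and the substitution $u=\cos\theta$ turns the $\theta$-integral into $\int_{-1}^{1}\dd u\,\exp(\ii|\vec k|r u)=2\sin(|\vec k|r)/(|\vec k|r)$. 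Collecting factors gives $4\pi r^2\sin(|\vec k|r)/(|\vec k|r)$, and the apparent singularity at $\vec k=0$ is removable, so the right-hand side is a bounded continuous function.

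For (ii), I would start from the distributional definition $\langle\mathbb F_{\vec x}[\delta_{|\vec x|=r}](\vec k)\,|\,\test(\vec k)\rangle = \langle\delta_{|\vec x|=r}(\vec x)\,|\,\mathbb F_{\vec k}[\test](\vec x)\rangle$. By the definition of $\delta_{|\vec x|=r}$ the right-hand side is $\int_{S_r^2}\dd\vec x\,\mathbb F_{\vec k}[\test](\vec x)$; inserting $\mathbb F_{\vec k}[\test](\vec x)=(2\pi)^{-3/2}\int\dd\vec k\,\exp(\ii\vec x\cdot\vec k)\test(\vec k)$ and exchanging the sphere integral with the $\vec k$-integral, the inner integral is exactly the quantity computed in (i). The interchange is legitimate because $\test$ is Schwartz, hence integrable, and $S_r^2$ is compact, so the integrand is absolutely integrable on $S_r^2\times\mathbb R^3$ and Fubini applies. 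It then remains only to match the prefactor, $4\pi/(2\pi)^{3/2}=2/(2\pi)^{1/2}$. Since the identity holds against every $\test$, it is an identity of regular distributions, the limit $\sin(|\vec k|r)/(|\vec k|r)\to 1$ at $\vec k=0$ confirming that the transform is the stated bounded, continuous (hence locally integrable) function.

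For (iii), the cleanest route is to write $\Theta_{|\vec x|\leq r}/|\vec x|$ as a superposition of radial Dirac distributions. Slicing the ball $B_r^3$ into spheres gives, for any $\test\in D(\mathbb R^3)$,
\[
 \left\langle \frac{\Theta_{|\vec x|\leq r}}{|\vec x|}\,\Big|\,\test\right\rangle = \int_{B_r^3}\frac{\test(\vec x)}{|\vec x|}\,\dd\vec x = \int_0^r\frac{1}{\rho}\,\langle\delta_{|\vec x|=\rho}\,|\,\test\rangle\,\dd\rho,
\]
i.e. $\Theta_{|\vec x|\leq r}/|\vec x|=\int_0^r\rho^{-1}\delta_{|\vec x|=\rho}\,\dd\rho$ as distributions. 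Applying the linear Fourier transform and using (ii) with $\rho$ in place of $r$, the factor $\rho^{-1}$ cancels one power of $\rho$ and the computation reduces to $\int_0^r\sin(|\vec k|\rho)\,\dd\rho=(1-\cos(|\vec k|r))/|\vec k|$, producing $-\tfrac{2}{(2\pi)^{1/2}}(\cos(|\vec k|r)-1)/|\vec k|^2$. Equivalently one can compute $(2\pi)^{-3/2}\int_{B_r^3}|\vec x|^{-1}\exp(\ii\vec k\cdot\vec x)\,\dd\vec x$ directly in polar coordinates, applying (i) on each shell of radius $\rho$.

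The individual integrations are routine; the one point demanding care is the rigorous handling of the Fourier transforms of these non-decaying distributions. Concretely, the main obstacle is justifying the interchange of the surface (respectively ball) integration with the test-function integration, and verifying that the resulting functions of $\vec k$, whose singularities at $\vec k=0$ are removable, are genuinely locally integrable, so that the stated formulae are valid as identities in $S'(\mathbb R^3)$ rather than merely formal manipulations. Everything else is direct integration using (i).
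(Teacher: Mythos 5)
Your proposal is correct and follows essentially the same path as the paper: part (i) by the identical polar-coordinate computation, part (ii) by pairing the distributional Fourier transform with a Schwartz test function and invoking (i) (you merely make the Fubini step explicit, which the paper leaves implicit), and part (iii) by slicing the ball into spherical shells, which is the paper's direct $L^1$ computation in only slightly different clothing. No gaps.
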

\begin{proof}
 \begin{enumerate}[i)]
  \item Integrating in spherical polar coordinates:
 \begin{align*}
  \int_{S^2_r} \dd \vec x \,\exp(\ii \vec k \cdot \vec x) &= r^2 \int_0^\pi \dd \theta \,\sin\theta \int_0^{2\pi} \dd \phi \, \exp(\ii |\vec k| r \cos \theta)
  = 4 \pi r^2 \frac{\sin( |\vec k| r )}{|\vec k| r}
 \end{align*}
 \item Using \ref{it:lemma1}) and Definitions \ref{def:radialdelta} and \ref{def:fourier}, for any $\test \in S(\mathbb R^3)$
 \begin{align*}
	\langle \mathbb F_{\vec x}[\delta_{|\vec x|=r}] | \test \rangle &= \langle \delta_{|\vec x|=r} | \mathbb F_{\vec k}[\test] \rangle
	= \int_{S_r^2} \dd \vec x \frac{1}{(2 \pi)^{3/2}} \dd \vec k \exp(-\ii \vec k \cdot \vec x) \test(\vec k) \\
	&= \Big\langle \frac{1}{(2 \pi)^{3/2}} \int_{S_r^2} \dd \vec x \exp(-\ii \vec k \cdot \vec x) \Big| \test \Big\rangle \\
	&= \Big\langle \frac{2}{(2 \pi)^{1/2}}    r^2 \frac{\sin (|\vec k| r)}{|\vec k| r} \Big| \test \Big\rangle
       \end{align*}
\item Note that $\Theta_{|\vec x|\leq r} \frac{1}{|\vec x|}$ is an $L^1_\text{loc}$ compactly supported function in three spatial dimensions. Thus, 
 \begin{align*}
        \mathbb F_{\vec x}\left[\frac{\Theta_{|\vec x|\leq \rho}}{|\vec x|} \right](\vec k) &= \frac{1}{(2 \pi)^{3/2}} \int_0^\rho \dd r \frac{1}{r} \int_{|\vec x| = r} \dd \vec x \exp(-\ii \vec k \cdot \vec x) \\&=  \frac{2}{(2 \pi)^{1/2} |\vec k|} \int_0^\rho \dd r \sin (|\vec k| r)
        = -\frac{2}{(2 \pi)^{1/2} } \frac{\cos (|\vec k| \rho) - 1}{|\vec k|^2}
       \end{align*}
 \end{enumerate}
\end{proof}

}

\section{Derivation of the solution formulae} \label{sec:distributionderivation}

The very standard procedure of finding a solution to any linear equation such as \eqref{eq:system} for sufficiently regular initial data $q_0(\vec x)$ is to decompose them into Fourier modes
\begin{align*}
 q_0(\vec x) &= \frac{1}{(2\pi)^{d/2}} \int \dd \vec k\, {\hat q}_0(\vec k) \exp(\ii \vec k \cdot \vec x)
\end{align*}
where $d \in \mathbb N$ is the dimensionality of the space. The coefficients ${\hat q}_0(\vec k) = (\hat {\vec v}_0(\vec k), \hat p_0(\vec k))$ of this decomposition are the Fourier transform of $q_0$ and $\vec k$ here characterizes the mode. The time evolution of any single Fourier mode can be found via the ansatz
\begin{align*}
 T_t\Big ( \exp(\ii \vec k \cdot \vec x)  \Big )  =  \exp\Big(- \ii \omega(\vec k) t + \ii \vec k \cdot \vec x\Big)
\end{align*}
where the function $\omega(\vec k)$ is to be determined from the equations by inserting the ansatz. {For \eqref{eq:acousticv}--\eqref{eq:acousticp} one finds $\omega(\vec k) \in \{ 0, \pm c |\vec k|\}$.} The time evolution $q(t, \vec x)$ of the initial data $q_0(\vec x)$ is given by {summing} all the time evolutions of the individual modes.
For {the IVP of} the acoustic system \eqref{eq:acousticv}--\eqref{eq:acousticp} {with initial data $(\vec v_0(\vec x), p_0(\vec x))$} the solution is
\begin{align}
 p(t, \vec x) = \frac{1}{(2\pi)^{d/2}} \int \dd \vec  k &\left(\frac{\hat p_0(\vec k) + \frac{\vec k \cdot  \hat {\vec v}_0(\vec k)}{|\vec k|} }{2}  \exp(\ii \vec k \cdot \vec x - \ii c |\vec k| t) \right. \nonumber\\ &+ \left . \frac{\hat p_0(\vec k) - \frac{\vec k \cdot  \hat {\vec v}_0(\vec k)}{|\vec k|} }{2} \exp(\ii \vec k \cdot \vec x + \ii c |\vec k| t) \right )   \label{eq:solutionfourierpfunc}
 \end{align}
 \begin{align}
 \vec v(t, \vec x) = \frac{1}{(2\pi)^{d/2}} \int \dd \vec  k &\left( \frac{ \hat p_0(\vec k) + \frac{\vec k \cdot  \hat {\vec v}_0(\vec k)}{|\vec k|} }{2} \frac{\vec k}{|\vec k|} \exp(\ii \vec k \cdot \vec x - \ii c |\vec k| t)  
   \right . \nonumber\\&+ \left.
 \frac{ \frac{\vec k \cdot  \hat {\vec v}_0(\vec k)}{|\vec k|} - \hat p_0(\vec k) }{2} \frac{\vec k}{|\vec k|} \exp(\ii \vec k \cdot \vec x + \ii c |\vec k| t) \right. \nonumber\\&+ \left. \left\{  \hat {\vec v}_0(\vec k) - \frac{\vec k}{|\vec k|} \frac{\vec k \cdot \hat {\vec v}_0(\vec k)}{|\vec k|}   \right \} \exp(\ii \vec k \cdot \vec x)  \right )  \label{eq:solutionfouriervfunc}
\end{align}
{where ${\hat p}_0(\vec k) = \mathbb F_\vec x[p_0](\vec k)$ and ${\hat {\vec v}}_0(\vec k) = \mathbb F_\vec x[\vec v_0](\vec k)$.}

An analogous formula is valid in the sense of distributions:
\begin{theorem}
 The distributional solution to the initial value problem given by \eqref{eq:acousticv}--\eqref{eq:acousticp} {and initial data $(\vec v_0, p_0) \in (S'(\mathbb R^d))^m$ with corresponding Fourier transforms $(\hat {\vec v}_0, \hat p_0) \in (S'(\mathbb R^d))^m$}, $m=d+1$, is 
 \begin{align}
 p(t, \vec x) = \mathbb F^{-1}_{\vec k}\left[\frac{\hat p_0(\vec k) + \frac{\vec k \cdot  \hat {\vec v}_0(\vec k)}{|\vec k|} }{2}  \exp(- \ii c |\vec k| t) 
 +\frac{\hat p_0(\vec k) - \frac{\vec k \cdot  \hat {\vec v}_0(\vec k)}{|\vec k|} }{2} \exp(\ii c |\vec k| t) \right ](\vec x) \label{eq:solutionfourierp}
 \end{align}
 \begin{align}
 \vec v(t, \vec x) = \mathbb F^{-1}_{\vec k}&\left[ \frac{ \hat p_0(\vec k) + \frac{\vec k \cdot  \hat {\vec v}_0(\vec k)}{|\vec k|} }{2} \frac{\vec k}{|\vec k|} \exp( - \ii c |\vec k| t) +
 \frac{ \frac{\vec k \cdot  \hat {\vec v}_0(\vec k)}{|\vec k|} - \hat p_0(\vec k) }{2} \frac{\vec k}{|\vec k|} \exp( \ii c |\vec k| t) \right. \nonumber\\&+ \left. \left\{  \hat {\vec v}_0(\vec k) - \frac{\vec k}{|\vec k|} \frac{\vec k \cdot \hat {\vec v}_0(\vec k)}{|\vec k|}   \right \}   \right ] (\vec x)\label{eq:solutionfourierv}
\end{align}

\end{theorem}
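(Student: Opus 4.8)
The plan is to Fourier-transform the system \eqref{eq:acousticv}--\eqref{eq:acousticp} in the spatial variable, turning the PDE into a frequency-parametrised linear ODE in $S'$, solve that ODE by diagonalising its symbol, and then check directly that the reassembled expressions satisfy the distributional initial value problem. First I would apply $\mathbb F_{\vec x}$ to both equations. Since $\mathbb F$ is an automorphism of $S'$ (Theorem \ref{thm:fourierautom}), the hypothesis $q_0 \in S'$ is equivalent to $\hat q_0 \in S'$, and by linearity of the transform together with the differentiation rule \eqref{eq:fourierderiv}, the system becomes, as an identity in $S'(\mathbb R^d)$ for every $t$,
\begin{align*}
 \del_t \hat{\vec v} = -\ii c\, \vec k\, \hat p, \qquad \del_t \hat p = -\ii c\, \vec k \cdot \hat{\vec v},
\end{align*}
that is $\del_t \hat q = -\ii c\, M(\vec k)\hat q$ with the $(d+1)\times(d+1)$ symbol matrix $M(\vec k) = \begin{pmatrix} 0 & \vec k \\ \vec k^\text{T} & 0 \end{pmatrix}$.

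Next I would diagonalise $M(\vec k)$. Writing $\hat{\vec n} := \vec k/|\vec k|$, its nonzero eigenvalues are $\pm|\vec k|$ with eigenvectors $(\hat{\vec n}, \pm 1)^\text{T}$, and the eigenvalue $0$ has multiplicity $d-1$ with eigenspace $\{(\vec w, 0) : \vec w \perp \hat{\vec n}\}$ — precisely the divergence-free modes whose stationarity reflects \eqref{eq:vorticity}. Decomposing the initial data along these eigenspaces yields the coefficients $a = \tfrac12(\hat p_0 + \vec k\cdot\hat{\vec v}_0/|\vec k|)$, $b = \tfrac12(\vec k\cdot\hat{\vec v}_0/|\vec k| - \hat p_0)$ and $\vec w = \hat{\vec v}_0 - \hat{\vec n}\,(\vec k\cdot\hat{\vec v}_0/|\vec k|)$. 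Evolving each mode with its exponential factor — $\exp(-\ii c|\vec k|t)$ for $+|\vec k|$, $\exp(+\ii c|\vec k|t)$ for $-|\vec k|$, and no factor for the stationary $0$-mode — and reassembling the column reproduces exactly \eqref{eq:solutionfourierp}--\eqref{eq:solutionfourierv}, generalising the classical computation \eqref{eq:solutionfourierpfunc}--\eqref{eq:solutionfouriervfunc}.

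To close the argument I would verify the two defining properties of a distributional solution on the formulae themselves. At $t=0$ every exponential equals $1$ and the three eigenprojections sum to the identity, so $\hat p(0)=\hat p_0$ and $\hat{\vec v}(0)=\hat{\vec v}_0$, and applying $\mathbb F^{-1}_{\vec k}$ recovers the prescribed data. For the equation I would differentiate in $t$: only the exponentials depend on $t$, with $\del_t \exp(\mp\ii c|\vec k|t) = \mp\ii c|\vec k|\exp(\mp\ii c|\vec k|t)$. The decisive simplification is that each singular factor $|\vec k|$ so produced recombines smoothly: in the velocity equation $|\vec k|\hat{\vec n} = \vec k$, and in the pressure equation the stationary part drops out because $\vec k\cdot\vec w = |\vec k|\,\hat{\vec n}\cdot\vec w = 0$. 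One then reads off that the $t$-derivatives equal $-\ii c\,\vec k\,\hat p$ and $-\ii c\,\vec k\cdot\hat{\vec v}$ respectively, matching the Fourier-transformed system above.

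The hard part will be justifying that these manipulations are legitimate in $S'$. The multipliers $|\vec k|$, $\vec k/|\vec k|$ and $\exp(\pm\ii c|\vec k|t)$ are continuous and bounded where relevant but fail to be smooth at $\vec k = 0$, so they do not lie in the multiplier algebra $\mathcal O_M$ and their products with an arbitrary $\hat q_0\in S'$ are not automatically defined. The resolution is exactly the cancellation noted above: in every expression constrained by the ODE the factor $1/|\vec k|$ is absorbed, and the surviving combinations — the stationary projection and the products of $\hat p_0,\hat{\vec v}_0$ with the bounded kernels $\hat{\vec n}\exp(\pm\ii c|\vec k|t)$ — are given precise meaning through their physical-space counterparts, namely convolutions whose Fourier transforms are the radial Dirac and step distributions computed in Theorem \ref{thm:fourierradial}. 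This identification both renders \eqref{eq:solutionfourierp}--\eqref{eq:solutionfourierv} well-defined tempered distributions and furnishes the bridge to the spherical-mean formulae of Theorem \ref{thm:solution}.
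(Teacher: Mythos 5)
Your proposal follows essentially the same route as the paper's proof: Fourier transform in space, diagonalization of the symbol $\vec J \cdot \vec k$ with eigenvalues $0, \pm c|\vec k|$, evolution of each eigenmode by its exponential phase, inverse transform, and verification that the data are recovered at $t=0$. You in fact go slightly beyond the paper by flagging that the multipliers $\vec k/|\vec k|$ and $\exp(\pm \ii c|\vec k|t)$ fail to be smooth at $\vec k = 0$ and hence are not automatically admissible against arbitrary elements of $S'$ — a genuine subtlety that the paper's proof passes over silently.
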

{\emph{Note}: In the smooth case the distributional solution \eqref{eq:solutionfourierp}--\eqref{eq:solutionfourierv} reduces to \eqref{eq:solutionfourierpfunc}--\eqref{eq:solutionfouriervfunc}.}
\begin{proof}
The use of $S'$ makes sure that the Fourier transforms exist according to Theorem \ref{thm:fourierautom}. Denoting the solution $q = (\vec v, p)$ (with $m=d+1$ components and independent variables $t$, $\vec x$) and its Fourier transform $\widehat{q(t)}$ {with respect to $\vec x$} (with independent variable $\vec k$), one has $q(t) = \mathbb F^{-1}_{\vec k} [\widehat{q(t)}]$. $q$ being the distributional solution to the system $\del_t q + \vec J \cdot \nabla q = 0$ of PDEs means
\begin{align*}
 \Big\langle (\del_t + \vec J \cdot \nabla) \mathbb F^{-1}_{\vec k}[\widehat{q(t)}] \Big| \test \Big\rangle &= 0 \qquad \forall \test \in {(S(\mathbb R^{{d}}))^m}
\end{align*}
which by \eqref{example-diffdistr} is
\begin{align}
 \Big \langle \mathbb F^{-1}_{\vec k}\left[\left({\id} \frac{\dd}{\dd t}  + \ii \vec J \cdot \vec k\right)\widehat{q(t)}\right] \Big| \test\Big  \rangle &= 0 \label{eq:intermediatefourieracoustic}
\end{align}
The matrix $ \vec J \cdot \vec k$ appears in the study of the Cauchy problem and bicharacteristics (see e.g. \cite{courant62}, VI, \S3). Hyperbolicity guarantees its real diagonalizability with eigenvalues $\omega_n \in \mathbb R$ ($n=1,\ldots m$). For the acoustic system \eqref{eq:acousticv}--\eqref{eq:acousticp} $\vec J \cdot \vec k$ is symmetric and $\omega_n \in \{0, \pm c |\vec k|\}$. Choosing orthonormal eigenvectors $e_n$ ($n=1, \ldots, m$) which fulfill
\begin{align*}
 (\vec J \cdot \vec k ) e_n = \omega_n e_n
\end{align*}
the vector $\widehat{q(0)}$ can be, for every $\vec k$, decomposed according to the eigenbasis of $\vec J \cdot \vec k$:
\begin{align*}
  \widehat{q(0)}(\vec k) = \sum_{n=1}^{m} e_n ( e_n \cdot {\widehat{q(0)}}(\vec k))
\end{align*}

Equation \eqref{eq:intermediatefourieracoustic} then is obviously solved by 
\begin{align*}
  \widehat{q(t)}(\vec k) = \sum_{n=1}^{m} e_n ( e_n \cdot {\widehat{q(0)}}(\vec k)) \exp(-\ii \omega_n(\vec k) t)
\end{align*}
and transforming back
\begin{align*}
 q(t, \vec x) &= \sum_{n=1}^{m} \mathbb F^{-1}_{\vec k} [ \hat q_{0n}(\vec k) \exp(-\ii \omega_n(\vec k) t) ]
\end{align*}
in the sense of distributions with
\begin{align*}
 \hat q_{0n}(\vec k) = e_n \Big( e_n \cdot \mathbb F_{\vec x}[q_0](\vec k) \Big)
\end{align*}
Using this, and computing the eigenspace projectors explicitly for \eqref{eq:acousticv}--\eqref{eq:acousticp} yields \eqref{eq:solutionfourierp}--\eqref{eq:solutionfourierv}.

{Consider the $m=d+1$ equations \eqref{eq:solutionfourierp}--\eqref{eq:solutionfourierv} as a mapping $ {\mathbb R^+_0} \to (S'(\mathbb R^d))^m$, i.e. $t \mapsto q(t)$. Then, for $\psi \in (S(\mathbb R^d))^m$, $t \mapsto \langle q(t) | \psi \rangle =  \langle \widehat{q(t)} | \mathbb F_{\vec k}^{-1}[ \psi] \rangle$ is smooth, in particular $q \in C^1(\mathbb R^+_0,(S'(\mathbb R^d))^m)$. For $t=0$, \eqref{eq:solutionfourierp}--\eqref{eq:solutionfourierv} obviously yield the initial data.}
\end{proof}

Given the Fourier transform of any initial data therefore the solution can easily be constructed. The solution formulae are most conveniently expressed using spherical averages as given {in Definition \ref{def:sphericalaverage}. With Theorem \ref{thm:fourierradial} it is possible to derive explicit solution formulae} for \eqref{eq:acousticv}--\eqref{eq:acousticp}.

\begin{theorem}[Solution formulae] \label{thm:solutionappendix}
 Consider the distributions ({$d=3$, $m=d+1$})
 \begin{align*} q(t, \vec x) = (\vec v(t, \vec x), p(t, \vec x)) \in C^1(\mathbb R^+_0, (S'(\mathbb R^{{d}}))^{m}) \end{align*}
 with
 \begin{align}
 p(t, \vec x) &= \regdist{p_0(\vec x)}  - \frac{1}{4 \pi} \frac{1}{ct} (\div \regdist{{\vec v}_0} * \delta_{|\vec x|=ct}) - \frac{1}{4 \pi} (\div \grad  \regdist{p_0} * \regdist{\frac{\Theta_{|\vec x|\leq ct}}{|\vec x|}}) \label{eq:phelmholtz}\\
 \vec v(t, \vec x) &= \regdist{{\vec v}_0(\vec x)} + \frac{1}{4 \pi} (\grad \div \regdist{{\vec v}_0} * \regdist{\frac{\Theta_{|\vec x|\leq ct}}{|\vec x|}}) - \frac{1}{4\pi} \frac{1}{ct}(\grad \regdist{p_0} * \delta_{|\vec x|=ct}) \label{eq:vhelmholtz}
 \end{align}
 They are distributional solutions to
  \begin{align*}
    \del_t q + \vec J \cdot \nabla q &= 0
  \end{align*}
  with $\vec J$ given by \eqref{eq:acousticjacobians} and ${L^1_\text{loc} \cap L^\infty}$ initial data $\vec v_0(\vec x), p_0(\vec x)$ such that \begin{align*}(\vec v(0, \vec x), p(0, \vec x)) = (\regdist{\vec v_0(\vec x)}, \regdist{ p_0(\vec x)}) \in (S'(\mathbb R^d))^{m} \end{align*}
  
\end{theorem}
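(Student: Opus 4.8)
The plan is to deduce the statement directly from the Fourier representation \eqref{eq:solutionfourierp}--\eqref{eq:solutionfourierv}, which the preceding theorem already establishes as \emph{the} distributional solution in $C^1(\mathbb R^+_0,(S'(\mathbb R^d))^m)$ with the prescribed initial data. Thus nothing about the PDE or about well-posedness need be reproven: the entire task is to show that the convolution expressions \eqref{eq:phelmholtz}--\eqref{eq:vhelmholtz} are \emph{equal}, as elements of $(S'(\mathbb R^3))^m$, to the inverse transforms in \eqref{eq:solutionfourierp}--\eqref{eq:solutionfourierv}. Since the Fourier transform is an automorphism of $S'$ (Theorem \ref{thm:fourierautom}), it suffices to match Fourier transforms. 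First I would combine the two exponentials $\exp(\mp\ii c|\vec k|t)$ into $\cos(c|\vec k|t)$ and $\sin(c|\vec k|t)$, obtaining for the pressure the multiplier $\hat p_0\cos(c|\vec k|t)-\ii\,\tfrac{\vec k\cdot\hat{\vec v}_0}{|\vec k|}\sin(c|\vec k|t)$ and for the velocity the analogous expression $\hat{\vec v}_0-\tfrac{\vec k(\vec k\cdot\hat{\vec v}_0)}{|\vec k|^2}\big(1-\cos(c|\vec k|t)\big)-\ii\,\hat p_0\,\tfrac{\vec k}{|\vec k|}\sin(c|\vec k|t)$, the undifferentiated term $\hat{\vec v}_0$ being exactly the stationary (curl) part that reassembles with the $\cos$-term.

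Next I would Fourier-transform the right-hand sides of \eqref{eq:phelmholtz}--\eqref{eq:vhelmholtz} term by term. Using the derivative rule \eqref{eq:fourierderiv} one writes $\mathbb F[\div\regdist{\vec v_0}]=\ii\vec k\cdot\hat{\vec v}_0$, $\mathbb F[\grad\regdist{p_0}]=\ii\vec k\,\hat p_0$, $\mathbb F[\div\grad\regdist{p_0}]=-|\vec k|^2\hat p_0$ and $\mathbb F[\grad\div\regdist{\vec v_0}]=-\vec k(\vec k\cdot\hat{\vec v}_0)$. The radial kernels are supplied by Theorem \ref{thm:fourierradial}: $\mathbb F[\delta_{|\vec x|=ct}]=\tfrac{2}{(2\pi)^{1/2}}\,ct\,\tfrac{\sin(|\vec k|ct)}{|\vec k|}$ and $\mathbb F[\Theta_{|\vec x|\le ct}/|\vec x|]=-\tfrac{2}{(2\pi)^{1/2}}\tfrac{\cos(|\vec k|ct)-1}{|\vec k|^2}$. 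Applying the convolution identity $\mathbb F[F*G]=(2\pi)^{3/2}\hat F\hat G$ (legitimate here because each kernel has compact support in $B^3_{ct}$), every convolution term becomes a product in which the polynomial in $\vec k$ produced by the derivatives cancels the $|\vec k|^{-1}$ or $|\vec k|^{-2}$ of the kernel transform, leaving precisely one of the trigonometric multipliers above — up to the numerical constants $\tfrac{1}{4\pi}$ and $\tfrac{1}{ct}$, which are arranged exactly so that the prefactors collapse to unity. Collecting the terms then reproduces the Fourier multipliers of \eqref{eq:solutionfourierp}--\eqref{eq:solutionfourierv}.

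It then remains to record the structural verifications. Because each kernel transform extends to a smooth function of $\vec k$ of polynomial growth — the apparent poles at $\vec k=0$ are removable, since $\sin(|\vec k|ct)/|\vec k|$ and $(1-\cos(|\vec k|ct))/|\vec k|^2$ are entire in $|\vec k|^2$ — multiplication by these factors maps $S'$ to $S'$, and the cancellation of the singular factors $|\vec k|^{-1},|\vec k|^{-2}$ against the derivative polynomials is a genuine identity of tempered distributions rather than a formal one; this legitimacy is the point requiring the most care. Once equality of the two $S'$-valued expressions is established for each fixed $t$, the regularity $q\in C^1(\mathbb R^+_0,(S'(\mathbb R^3))^m)$ and the initial condition $q(0)=(\regdist{\vec v_0},\regdist{p_0})$ are inherited from the preceding theorem; alternatively they follow directly, since at $t=0$ the kernels $\delta_{|\vec x|=ct}$ and $\Theta_{|\vec x|\le ct}/|\vec x|$ vanish and only the $\regdist{p_0},\regdist{\vec v_0}$ terms survive. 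The main obstacle is therefore not the algebra but the distributional bookkeeping: verifying that multiplying the tempered distributions $\hat p_0,\hat{\vec v}_0$ by the singular symbols $\vec k/|\vec k|$ and $\vec k\otimes\vec k/|\vec k|^2$ and then by the kernel transforms yields well-defined elements of $S'$ obeying the convolution theorem, so that the hypothesis $\vec v_0,p_0\in L^1_{\mathrm{loc}}\cap L^\infty$ is used only to guarantee $\hat p_0,\hat{\vec v}_0\in S'$.
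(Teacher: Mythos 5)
Your proposal is correct and follows essentially the same route as the paper's proof: both start from the Fourier representation \eqref{eq:solutionfourierp}--\eqref{eq:solutionfourierv} established in the preceding theorem, collapse the exponentials into $\cos$/$\sin$ multipliers, absorb the singular factors $|\vec k|^{-1}$, $|\vec k|^{-2}$ into the derivative polynomials (the paper's insertion of $1 = \vec k \cdot \vec k / |\vec k|^2$), identify the trigonometric factors as kernel transforms via Theorem \ref{thm:fourierradial}, and convert products into convolutions, justified by the compact support of $\delta_{|\vec x|=ct}$ and $\Theta_{|\vec x|\leq ct}/|\vec x|$. The only difference is the direction in which the identity is read (you transform \eqref{eq:phelmholtz}--\eqref{eq:vhelmholtz} and match, the paper rewrites the Fourier solution into convolution form), which is immaterial.
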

\begin{proof}
 Recall the differentiation rule in presence of the Fourier transform as formulated in \eqref{eq:fourierderiv}. Denote by $\del_i$ differentiation with respect to the $i$-the direction and $k_i$ the corresponding component of $\vec k$.

 Inserting the definition of ${\hat p}_0(\vec k)$ and $\hat {\vec v}_0(\vec k)$ into \eqref{eq:solutionfourierp}--\eqref{eq:solutionfourierv} yields
 \begin{align*}
 \mathbb F_{\vec x} [p(t, \vec x)](\vec k) &= \mathbb F_{\vec x}[\regdist{p_0}](\vec k)\cdot \cos(c |\vec k| t) - \mathbb F_{\vec x}[\div \regdist{{\vec v}_0}](\vec k)\cdot \frac{\sin( c |\vec k| t) }{|\vec k|}\\
 \mathbb F_{\vec x}[\vec v(t, \vec x)](\vec k) &= \mathbb F_{\vec x}[\regdist{{\vec v}_0}](\vec k)  - \mathbb F_{\vec x}[ \grad \div \regdist{{\vec v}_0}](\vec k) \frac{ \cos(  c |\vec k| t)-1 }{|\vec k|^2}\nonumber\\ &- \mathbb F_{\vec x}[\grad \regdist{p_0}](\vec k) \, \frac{\sin(c |\vec k| t)}{|\vec k|}
\end{align*}
Now using Theorem \ref{thm:fourierradial} one rewrites
 \begin{align*}
 \mathbb F_{\vec x} [p(t, \vec x)](\vec k) &=\mathbb F_{\vec x}[\regdist{p_0}](\vec k)  - \mathbb F_{\vec x}[\div \regdist{{\vec v}_0}](\vec k)\cdot \mathbb F_{\vec x}[\delta_{|\vec x|=ct}](\vec k) \frac{\sqrt{2\pi}}{2 ct}\nonumber\\
 &- \mathbb F_{\vec x}[\div \grad  \regdist{p_0}](\vec k)\cdot \mathbb F_{\vec x}\left[\regdist{\frac{\Theta_{|\vec x|\leq ct}}{|\vec x|}}\right](\vec k) \frac{\sqrt{2\pi}}{2}\\
 \mathbb F_{\vec x}[\vec v(t, \vec x)](\vec k) &= \mathbb F_{\vec x}[\regdist{{\vec v}_0}](\vec k)  + \mathbb F_{\vec x}[ \grad \div \regdist{{\vec v}_0}](\vec k) \cdot \mathbb F_{\vec x}\left[\regdist{\frac{\Theta_{|\vec x|\leq ct}}{|\vec x|}}\right](\vec k) \frac{\sqrt{2 \pi}}{2}\nonumber\\ &- \mathbb F_{\vec x}[\grad \regdist{p_0}](\vec k) \cdot \mathbb F_{\vec x}[\delta_{|\vec x|=ct}](\vec k) \frac{\sqrt{2\pi}}{2 ct}
\end{align*}
When rewriting $\cos(c |\vec k| t) -1$ as a Fourier transform, $1 = \frac{\vec k \cdot \vec k}{|\vec k|^2}$ has been inserted. As both $\regdist{\frac{\Theta_{|\vec x|\leq ct}}{|\vec x|}}$ and $\delta_{|\vec x|=ct}$ have compact support, the convolutions that involve one of them are well defined (see Definition \ref{def:convolution}). Thus the products above can be converted into Fourier transforms of convolutions, which proves the assertion.
\end{proof}

\begin{corollary} \label{cor:helmholtzappendix}
 {For $C^2 \cap L^\infty$ initial data $p_0$, $\vec v_0$}, Equations \eqref{eq:phelmholtz}--\eqref{eq:vhelmholtz} become
 \begin{align}
   p(t, \vec x) &= p_0(\vec x) + \int_0^{ct} \dd r \, r \frac{1}{4\pi} \int_{S^2_1} \dd {\vec y} \,\,(\div \grad p_0)(\vec x + r \vec y) - ct \frac{1}{4\pi}\int_{S^2_1} \dd {\vec y}\,\, \div \vec v_0 (\vec x + ct \vec y)\label{eq:phelmholtzfuncappendix}\\
 \vec v(t, \vec x) &= \vec v_0(\vec x) + \int_0^{ct} \dd r \, r \frac{1}{4\pi}\int_{S^2_1} \dd {\vec y} \,\, (\grad \div \vec v_0)(\vec x + r \vec y) - ct \frac{1}{4\pi}\int_{S^2_1} \dd {\vec y} \,\, (\grad p_0)(\vec x + ct \vec y)\label{eq:vhelmholtzfuncappendix}
  \end{align}
\end{corollary}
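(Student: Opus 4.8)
The plan is to evaluate the two kinds of convolutions appearing in \eqref{eq:phelmholtz}--\eqref{eq:vhelmholtz} explicitly, exploiting that for $C^2 \cap L^\infty$ data every derivative distribution occurring there is in fact a regular distribution. Concretely, since $p_0, \vec v_0 \in C^2$, the distributional derivatives $\div \grad p_0$, $\grad \div \vec v_0$, $\grad p_0$ and $\div \vec v_0$ coincide with the classical pointwise derivatives, which are continuous and hence locally integrable. Thus each factor $\div \grad \regdist{p_0}$, $\grad \div \regdist{\vec v_0}$, $\grad \regdist{p_0}$, $\div \regdist{\vec v_0}$ equals $\regdist{f}$ for a continuous function $f$, and the convolutions are genuine functions because the kernels $\delta_{|\vec x|=ct}$ and $\regdist{\Theta_{|\vec x|\leq ct}/|\vec x|}$ are compactly supported, so Definition \ref{def:convolution} applies.

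First I would treat the convolution with the radial Dirac distribution. By Definition \ref{def:sphericalaverage} and the computation recorded immediately below it, for any regular distribution $\regdist{f}$ with continuous $f$ one has
\begin{align*}
 \frac{1}{4\pi}\frac{1}{ct}\left(\regdist{f} * \delta_{|\vec x|=ct}\right)(\vec x) = ct \, \frac{1}{4\pi}\int_{S^2_1} \dd \vec y \, f(\vec x + ct \vec y),
\end{align*}
the factor $ct$ appearing because the spherical average of Definition \ref{def:sphericalaverage} is normalized by $1/(ct)^2$ whereas \eqref{eq:phelmholtz}--\eqref{eq:vhelmholtz} carry only $1/(ct)$. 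Applied with $f = \div \vec v_0$ and $f = \grad p_0$ this produces exactly the last terms of \eqref{eq:phelmholtzfuncappendix} and \eqref{eq:vhelmholtzfuncappendix}.

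Next I would treat the convolution with $\regdist{\Theta_{|\vec x|\leq ct}/|\vec x|}$. Writing it out via the regular-distribution convolution formula and using that $\Theta_{|\vec x|\leq ct}/|\vec x|$ is radially symmetric gives
\begin{align*}
 \frac{1}{4\pi}\left(\regdist{f} * \regdist{\frac{\Theta_{|\vec x|\leq ct}}{|\vec x|}}\right)(\vec x) = \frac{1}{4\pi}\int_{|\vec z|\leq ct} \dd \vec z \, \frac{f(\vec x + \vec z)}{|\vec z|},
\end{align*}
and passing to spherical polar coordinates $\vec z = r \vec y$, $\vec y \in S^2_1$, with $\dd \vec z = r^2 \dd r \, \dd \vec y$, collapses the radial weight $r^2/|\vec z| = r$, yielding
\begin{align*}
 \frac{1}{4\pi}\left(\regdist{f} * \regdist{\frac{\Theta_{|\vec x|\leq ct}}{|\vec x|}}\right)(\vec x) = \int_0^{ct} \dd r \, r \, \frac{1}{4\pi}\int_{S^2_1} \dd \vec y \, f(\vec x + r \vec y).
\end{align*}
With $f = \div \grad p_0$ and $f = \grad \div \vec v_0$ this is precisely the remaining integral term of \eqref{eq:phelmholtzfuncappendix} and \eqref{eq:vhelmholtzfuncappendix}, so substituting both identities into \eqref{eq:phelmholtz}--\eqref{eq:vhelmholtz} gives the claimed formulae. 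The only point requiring care is the integrability of the $1/|\vec z|$ kernel near the origin: in three dimensions the Jacobian factor $r^2$ cancels it to leave the harmless weight $r$, so the integral is finite for continuous (indeed merely $L^\infty$) data. This is the step that genuinely uses $d=3$ and is the only obstacle worth flagging; the remainder is direct substitution.
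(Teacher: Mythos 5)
Your proof is correct and takes essentially the same route as the paper's own: both arguments evaluate the two convolution kernels $\delta_{|\vec x|=ct}$ and $\Theta_{|\vec x|\leq ct}/|\vec x|$ explicitly on regular distributions (using that $C^2$ data make all derivative distributions regular, continuous functions) and rewrite them as spherical averages via polar coordinates, with the same $(ct)^2$ versus $1/(ct)$ bookkeeping for the radial Dirac term and the same $r^2/|\vec z| = r$ cancellation for the volume term. No gap to report.
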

\begin{proof}
The formulae \eqref{eq:phelmholtz}--\eqref{eq:vhelmholtz} are transformed into \eqref{eq:phelmholtzfuncappendix}--\eqref{eq:vhelmholtzfuncappendix} by noting that if $f$ is integrable, then for all $\test \in D(\mathbb R^d)$
\begin{align*}
 \frac{1}{4 \pi} \left(f * \frac{\Theta_{|\vec x|\leq ct}}{|\vec x|}\right) &= \frac{1}{4 \pi}  \int_{|\vec y| \leq ct} \dd \vec y  \frac{1}{|\vec y|}  f(\vec x - \vec y) = \int_0^{ct}  \dd r \,r \frac{1}{4\pi}\int_{S^2_1} \dd {\vec y}  f(\vec x + r \vec y)\\
 \frac{1}{4 \pi} \langle \regdist{f} * \delta_{|\vec x|=ct}| \test \rangle &= \Big \langle \frac{1}{4 \pi}  \int_{S^2_{ct}} \dd \vec y    f(\vec x - \vec y) \Big | \test \Big \rangle = \Big \langle (ct)^2 \frac{1}{4\pi}\int_{S^2_1} \dd {\vec y}  f(\vec x + ct \vec y) \Big | \test \Big \rangle
\end{align*}

\end{proof}

{The components of any vector $\vec y \in \mathbb R^d$ are denoted by $y_i$, $i = 1, \ldots, d$. For example the components of the unit normal vector $\vec n := \frac{\vec x}{|\vec x|}$ are denoted by $n_i$, $i = 1, \ldots, d$. The Kronecker symbol is
\begin{align*}
 \delta_{ij} := \begin{cases} 1 & i = j \\ 0 & \text{else} \end{cases}
\end{align*}
}

In order to simplify notation the following distributions will be used:

\begin{theorem} \label{def:sigmaij}
 \begin{enumerate}[i)] 
  \item Given a test function $\test \in D(\mathbb R^3)$ the following integral exists
  \begin{align}
   \int_0^{ct} \dd r \frac{1}{r^3} \int_{S^2_r} \dd \vec y \left( 3\frac{y_i y_j}{|\vec y|^2} - \delta_{ij} \right ) \test(\vec y)   \label{eq:defdistrSigma}
  \end{align}
  This defines a distribution $\Sigma_{ij}(ct)$ whose action $\langle \Sigma_{ij}(ct) | \test \rangle$ onto a test function $\test$ is given by \eqref{eq:defdistrSigma}.
  \item Given a test function $\test \in D(\mathbb R^3)$ the following integral exists
 \begin{align}
 \int_0^{ct} \dd r \frac{1}{r} \del_r \left[ \frac{1}{r} \del_r \left( r \int_{S^2_r} \dd \vec y \frac{y_i y_j}{|\vec y|^2} \test(\vec y) \right )  -  \frac{1}{r} \int_{S^2_r} \dd \vec y  \delta_{ij} \test(\vec y)   \right ]   \label{eq:defdistrsigma}
 \end{align}
 This defines a distribution $\sigma_{ij}(ct)$ whose action $\langle \sigma_{ij}(ct) | \test \rangle $ onto a test function $\test$ is given by \eqref{eq:defdistrsigma}.
 \end{enumerate}
\end{theorem}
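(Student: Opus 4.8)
The plan is to show, for each fixed $\test \in D(\mathbb R^3)$, that the two $r$-integrals converge absolutely, and then that the resulting functionals are linear and continuous on $D(\mathbb R^3)$. Near the upper endpoint $r=ct$ the integrands are smooth, so the only danger is the behaviour as $r\to 0$, where the prefactors $1/r^3$ in i) and $\frac1r\del_r[\frac1r\del_r(\cdots)]$ in ii) threaten non-integrable singularities. The single mechanism that rescues both cases is the trace-free identity
\begin{align*}
\int_{S^2_1} \dd \vec y \, \frac{y_i y_j}{|\vec y|^2} = \frac{4\pi}{3}\delta_{ij}, \qquad \int_{S^2_1}\dd\vec y\left(3\frac{y_iy_j}{|\vec y|^2} - \delta_{ij}\right) = 0,
\end{align*}
which follows from rotational symmetry of the unit sphere. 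First I would rescale $\vec y = r\vec y'$ on $S^2_r$, using $\int_{S^2_r}\dd\vec y\, f(\vec y) = r^2\int_{S^2_1}\dd\vec y\, f(r\vec y)$ and $y_iy_j/|\vec y|^2 = y'_iy'_j/|\vec y'|^2$, to make the radial scaling explicit.

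For part i), after rescaling the integrand becomes $g_{ij}(r)/r$ with $g_{ij}(r) := \int_{S^2_1}\dd\vec y\,(3\frac{y_iy_j}{|\vec y|^2} - \delta_{ij})\test(r\vec y)$. Since $\test$ is smooth, $g_{ij}$ is smooth in $r$, and the trace-free identity gives $g_{ij}(0) = 0$. Hence $g_{ij}(r)/r \to g_{ij}'(0)$ as $r\to 0$, so the integrand extends continuously to $[0,ct]$ and the integral exists. (By the odd symmetry $\int_{S^2_1} y_iy_jy_k/|\vec y|^3 = 0$ one even gets $g_{ij}'(0)=0$, hence $O(r)$ decay, but continuity already suffices.)

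For part ii) I would carry out the two radial derivatives explicitly. Writing $\Phi_{ij}(r) := \int_{S^2_1}\dd\vec y\,\frac{y_iy_j}{|\vec y|^2}\test(r\vec y)$ and $\Psi(r) := \int_{S^2_1}\dd\vec y\,\test(r\vec y)$, so that $\int_{S^2_r}\frac{y_iy_j}{|\vec y|^2}\test\,\dd\vec y = r^2\Phi_{ij}(r)$ and $\int_{S^2_r}\delta_{ij}\test\,\dd\vec y = r^2\delta_{ij}\Psi(r)$, an elementary computation yields
\begin{align*}
\frac{1}{r}\del_r\left[\frac{1}{r}\del_r\left(r^3\Phi_{ij}\right) - r\delta_{ij}\Psi\right] = \frac{3\Phi_{ij} - \delta_{ij}\Psi}{r} + \left(5\Phi_{ij}' - \delta_{ij}\Psi'\right) + r\Phi_{ij}''.
\end{align*}
Every term except the first is manifestly continuous on $[0,ct]$, and the first term is controlled by the same identity: $3\Phi_{ij}(0) - \delta_{ij}\Psi(0) = (4\pi - 4\pi)\delta_{ij}\test(0) = 0$, so the quotient again extends continuously to $r=0$ and the integral converges.

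Finally, linearity of $\Sigma_{ij}(ct)$ and $\sigma_{ij}(ct)$ is immediate from linearity of the integrals. For continuity it suffices to estimate each integral by $ct$ times a finite sum of suprema of $\test$ and its derivatives up to second order over the closed ball $B^3_{ct}$: the functions $g_{ij}, \Phi_{ij}, \Psi$ and their first two $r$-derivatives are bounded by $\sup_{B^3_{ct}}|\del^\alpha\test|$ with $|\alpha|\le 2$, and the removable-singularity bounds above are uniform in $r$. This gives $\Sigma_{ij}(ct), \sigma_{ij}(ct) \in D'(\mathbb R^3)$. I expect the only real obstacle to be the $r\to 0$ analysis — recognising that the geometric trace-free identity is precisely what cancels the leading singularity, and, in part ii), keeping careful track of the nested $\frac1r\del_r$ operations so as to isolate the single genuinely singular term.
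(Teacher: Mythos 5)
Your proof is correct and takes essentially the same route as the paper: rescale $S^2_r$ to the unit sphere, use the trace-free identity $\int_{S^2_1} \left(3 y_i y_j - \delta_{ij}\right) \mathrm{d}\vec y = 0$ to cancel the $1/r$ singularity at the origin, and expand the nested radial derivatives in ii) so that only this same singular term requires the cancellation (the paper phrases this as ``reduces to case i)'', using the mean value theorem where you use continuity of the difference quotient). Your explicit expansion in ii) is in fact cleaner than the paper's displayed intermediate step, and your added check of linearity and continuity of the functionals is a harmless bonus the paper omits.
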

\begin{proof}
One needs to prove the existence of the integrals, because including the origin into the integration domain might potentially be problematic. Therefore, for {$a \in \mathbb R^+$}, divide the integration over $[0, ct]$ into two integrals over $[0, a]$ and $[a, ct]$ and consider $a \to 0$.

The reason why the integrals exist is subtle. First of all, without the test function one finds upon explicit computation
 \begin{align}
  \int_{S^2_1} \dd \vec y \,  y_i y_j = \frac13 \int_{S^2_1} \dd \vec y  \delta_{ij} = \frac{4\pi}{3} \delta_{ij} \label{eq:concellationtensornomral}
 \end{align}
 Precisely this combination $3 y_i y_j - \delta_{ij}$ appears in \eqref{eq:defdistrSigma}.
 
\begin{enumerate}[i)]
 \item \label{it:firstsigma} 
 Recall that $\test \in C^\infty$, and therefore by the mean value theorem there exists $\xi(r, \vec y)$ such that
 \begin{align*}
  \test(r \vec y) = \test(0) + r \vec y \cdot\nabla \test(\xi \vec y)
 \end{align*}
 Then for $a > 0$ 
 \begin{align*}
  \int_0^{a}\dd r \, \frac{1}{r^3} \int_{S^2_r} \dd \vec y & \, \left(3 \frac{y_i y_j}{|\vec y|^2} - \delta_{ij} \right ) \test(\vec y) 
  \overset{S_r^2 \leftrightarrow S_1^2}{=} \int_0^{a}\dd r \, \frac{1}{r} \int_{S^2_1} \dd \vec y \, (3 y_i y_j - \delta_{ij}) \test(r\vec y)\\
  &= \int_0^{a}\dd r \, \frac{1}{r} \int_{S^2_1} \dd \vec y \, (3 y_i y_j - \delta_{ij}) \Big(\test(0) + r \vec y \cdot\nabla \test(\xi \vec y) \Big)
 \end{align*}
 \begin{align*}
  \left|  \int_0^{a}\dd r \, \frac{1}{r^3} \int_{S^2_r} \dd \vec y \, \left(3 \frac{y_i y_j}{|\vec y|^2} - \delta_{ij} \right ) \test(\vec y) \right | &\overset{\eqref{eq:concellationtensornomral}}{=}
  \left| \int_0^{a}\dd r \,  \int_{S^2_1} \dd \vec y \, (3 y_i y_j - \delta_{ij})  \vec y \cdot \nabla \test(\xi \vec y) \right |
  \\&\leq C a \| \nabla \test \|_\infty
 \end{align*}
 \item 
 By expanding
 \begin{align*}
  \langle \sigma_{ij}(ct) | \test \rangle &:= \int_0^{ct} \dd r \frac{1}{r} \del_r \left[ \frac{1}{r} \del_r \left( r^3 \int_{S^2_1} \dd \vec y y_i y_j \test(r\vec y) \right )  -  r \int_{S^2_1} \dd \vec y  \delta_{ij} \test(r\vec y)   \right ] \\
  &= \int_0^{ct} \dd r   \left[ 
  \frac{1}{r} \int_{S^2_1} \dd \vec y \, (3 y_i y_j - \delta_{ij}) \test(r\vec y) 
  + 5 \del_r \int_{S^2_1} \dd \vec y \, (5 y_i y_j - \delta_{ij}) \test(r\vec y)  \right. \nonumber\\& \phantom{mmmmm}\left. 
  + r \del_r^2 \int_{S^2_1} \dd \vec y \,  y_i y_j \test(r\vec y)  
  \right ]
 \end{align*}
 This reduces to the case discussed in \ref{it:firstsigma}).
 \end{enumerate}
\end{proof}

In the following the convention of summing over repeated indices is adopted. {Recall the definition of the radial derivative $\del_r := \vec n \cdot \nabla$ with $\vec n = \frac{\vec x}{|\vec x|}$.}

\begin{lemma}  \label{lem:differentiationandgauss}
The following laws of differentiation allow to transfer derivatives when working with spherical means:
 \begin{enumerate}[i)]
  \item \label{it:lemdifferentiationandgaussradial} 
  \begin{align}
  \del_r \left( \regdist{f} *\, \delta_{|\vec x|=r} \frac{1}{r^2} \right ) &= \del_i \regdist{f}(x) * \delta_{|\vec x|=r} \frac{1}{r^2} n_i   \label{eq:diffrlemma}
  \end{align}
 \item \label{it:lemdifferentiationandgaussgauss} 
 \begin{align}
  \del_i (\regdist{f} * \,\delta_{|\vec x|=r} n_i) &= -\nabla_{\vec x} \cdot \nabla_{\vec x} \regdist{f}(x) * \regdist{\Theta_{|\vec x| \leq r}} \label{eq:gausslemma}
 \end{align}
 \end{enumerate}
\end{lemma}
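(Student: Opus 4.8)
The plan is to prove the two identities separately. In each case I would pair both sides with an arbitrary test function $\test \in D(\mathbb R^3)$ and unfold the convolutions into surface integrals using Definition~\ref{def:convolution} together with the definition of $\delta_{|\vec x|=r}$, working throughout with $f \in L^1_\text{loc}\cap L^\infty$ so that the regular distribution $\regdist{f}$ and its convolutions against the compactly supported kernels $\delta_{|\vec x|=r}$ and $\regdist{\Theta_{|\vec x|\leq r}}$ are well defined.

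For part~\ref{it:lemdifferentiationandgaussradial} I would first make the dependence on the radius $r$ explicit. Unfolding $\regdist{f} * \delta_{|\vec x|=r}\frac{1}{r^2}$ against $\test$ produces a surface integral over $S^2_r$; rescaling $S^2_r \to S^2_1$ pushes all the $r$-dependence into the argument of $f$ (equivalently of $\test$), so that the pairing becomes an integral over the \emph{fixed} unit sphere in which $r$ enters only through terms of the form $f(\,\cdot + r\vec y\,)$. The operator $\del_r$ may then be taken under the integral sign — this is legitimate because $\test$ is smooth with compact support and $f \in L^1_\text{loc}\cap L^\infty$, so the integrand and its $r$-derivative are dominated uniformly for $r$ in a compact interval. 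The chain rule converts $\del_r$ into the directional derivative $\vec y\cdot\nabla$, and since on $S^2_1$ the integration variable $\vec y$ is precisely the outward unit normal $\vec n$, this is $n_i\del_i$. Re-assembling the result as a convolution then reproduces the right-hand side $\del_i\regdist{f} * \delta_{|\vec x|=r}\frac{1}{r^2} n_i$; the genuine work lies in bookkeeping the normalisation $1/r^2$ and the orientation of $\vec n$.

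For part~\ref{it:lemdifferentiationandgaussgauss} I would avoid explicit computation and instead use the differentiation rule for convolutions, $\del_i(\regdist{f} * G) = \regdist{f} * \del_i G = (\del_i\regdist{f}) * G$, which is valid here because $G = n_i\,\delta_{|\vec x|=r}$ has compact support. The key input is Theorem~\ref{thm:deriavtiveradialstep}, which in components reads $n_i\,\delta_{|\vec x|=r} = -\del_i\regdist{\Theta_{|\vec x|\leq r}}$. Summing over $i$ gives $\del_i\big(n_i\,\delta_{|\vec x|=r}\big) = -\Delta\regdist{\Theta_{|\vec x|\leq r}}$, whence $\del_i(\regdist{f} * n_i\,\delta_{|\vec x|=r}) = -\,\regdist{f} * \Delta\regdist{\Theta_{|\vec x|\leq r}}$. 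A final application of the same convolution rule transfers the two derivatives back onto $f$, yielding $-\nabla\cdot\nabla\regdist{f} * \regdist{\Theta_{|\vec x|\leq r}}$, which is the assertion.

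The main obstacle is confined to part~\ref{it:lemdifferentiationandgaussradial}: justifying the interchange of the parameter derivative $\del_r$ with the surface integral for a kernel that is a singular surface-measure distribution, and then tracking the sign and orientation conventions carefully enough that the normal $\vec n$ enters with the correct sign (the delicate point is the $\vec x + r\vec y$ versus $\vec x - \vec y$ convention relating the spherical mean to the convolution). Part~\ref{it:lemdifferentiationandgaussgauss}, by contrast, is essentially the distributional Gauss theorem already packaged in Theorem~\ref{thm:deriavtiveradialstep}, and should follow in a few formal lines once the convolution-differentiation rule is invoked.
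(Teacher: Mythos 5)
Your proposal takes essentially the same route as the paper: for part i) the paper likewise pairs with a test function, rescales $S^2_r$ to $S^2_1$, differentiates under the integral, applies the chain rule to get $\vec y \cdot \nabla_{\vec x}$, and then moves the gradient onto $\regdist{f}$ via the definition of the distributional derivative before reassembling the convolution; for part ii) the paper's proof is exactly your argument (Theorem \ref{thm:deriavtiveradialstep} plus transferring derivatives through the convolution), merely written out by unfolding onto test functions instead of invoking the rule $\del_i(F*G)=F*\del_i G$ abstractly. Your explicit flagging of the sign and orientation bookkeeping is well placed, as that is precisely the delicate point in the paper's own computation as well.
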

\begin{proof}
 \begin{enumerate}[i)]
  \item Consider a test function $\test \in D(\mathbb R^3)$:
  \begin{align*}
  \langle \del_r \regdist{f} *\, \delta_{|\vec x|=r} \frac{1}{r^2} | \test \rangle &= \Big \langle  \regdist{f}(\vec x) | \del_r \langle \delta_r(\vec y) \frac{1}{|\vec y|^2} | \test(\vec x + \vec y) \rangle \Big \rangle = - \Big \langle \regdist{f}(\vec x) |  \int_{S^2_1} \dd \vec y \del_r \test(\vec x + r\vec y) \Big \rangle\\
  &= - \langle \regdist{f}(\vec x) | \int_{S^2_1} \dd \vec y \, \vec y \cdot \nabla_{\vec x} \test(\vec x + r\vec y)  \rangle\\
  &= \left \langle \nabla_{\vec x} \regdist{f}(\vec x) \Big | \int_{S^2_r} \dd \vec y \frac{1}{|\vec y|^2} \frac{\vec y}{|\vec y|} \cdot  \test(\vec x + \vec y)  \right \rangle\\
  &= \left \langle \frac{\del}{\del x_i} \regdist{f}(x) * \delta_{|\vec x|=r} \frac{1}{r^2} n_i \Big| \test \right \rangle \end{align*}
 \item Recall Theorem \ref{thm:deriavtiveradialstep} which states that $-\delta_{|\vec x|=r} \vec n = \nabla \regdist{\Theta_{|\vec x| \leq r}}$. Thus 
 \begin{align*}
    \left \langle \del_i (\regdist{f} * \delta_{|\vec x|=r} n_i)\Big | \test \right\rangle  &=-\left \langle \regdist{f} \Big | \left \langle \delta_{|\vec x|=r} n_i | \del_i \test \right \rangle \right \rangle\\
    &= \left \langle \regdist{f} \Big | \left \langle \del_i \regdist{\Theta_{|\vec x| \leq r}} | \del_i \test \right \rangle \right \rangle 
    = -\left \langle \regdist{f} \Big | \left \langle \del_i \del_i \regdist{\Theta_{|\vec x| \leq r}} | \test \right \rangle \right \rangle \\
    &= -\left \langle \nabla_{\vec x} \cdot \nabla_{\vec x} \regdist{f}(x) * \regdist{\Theta_{|\vec x| \leq r}} \Big | \test \right \rangle 
 \end{align*} 
 \end{enumerate}
\end{proof}
{\textit{Note}}: Equation \eqref{eq:diffrlemma} is the distributional analogue to \begin{align*}\del_r \int_{S^2_1} \dd \vec y \, f(\vec x + r \vec y) = \int_{S^2_1} \dd \vec y \, y_i \del_i f(\vec x + r \vec y)\end{align*} and Equation \eqref{eq:gausslemma} is the distributional analogue to \begin{align*} r^2 \int_{S^2_1} \dd {\vec y} \, \vec y \cdot \nabla p_0(\vec x + r\vec y)  = \int_0^r \dd r' \, {r'}^2 \int_{S^2_1} \dd {\vec y} \, \nabla \cdot \nabla p_0(\vec x + r'\vec y) \end{align*}

\begin{theorem}[Solution formulae with radial derivatives only]  \label{thm:solutionradialappendix}
Consider the setup of Theorem \ref{thm:solutionappendix} and write the components of the initial data $\vec v_0$ as $v_{0i}$, $i  = 1, 2, 3$. The solution \eqref{eq:phelmholtz}--\eqref{eq:vhelmholtz} can be rewritten as
\begin{align}
 p(t, \vec x) &= \del_r \left(\frac1{4\pi}\frac{\delta_{|\vec x|=r}}{r}*\regdist{p_0}\right) - \frac{1}{r} \del_r \left(\frac1{4\pi} \delta_{|\vec x|=r} n_i * \regdist{v_{0i}}\right ) \label{eq:preformulappendix}\\
 v_j(t, \vec x) &= \frac23 \regdist{ v_{0j}}(\vec x) - \frac{1}{r} \del_r \left( \frac1{4\pi}\delta_{|\vec x|=r} n_j * \regdist{p_0} \right) + \del_r \left(\frac1{4\pi}\frac{\delta_{|\vec x|=r}}{r} n_i n_j * \regdist{v_{0j}} \right ) \nonumber \\&- \left( \frac1{4\pi}\frac{\delta_{|\vec x|=r}}{r^2} (\delta_{ij} - 3 n_i n_j) * \regdist{v_{0i}}\right) + \frac1{4\pi}\Sigma_{ij}(ct) * \regdist{v_{0i}}   \label{eq:vsolution23appendix}
\end{align}
where all derivatives are to be evaluated at $r=ct$.

Equation \eqref{eq:vsolution23appendix} is equivalent to
\begin{align}
 v_j(t, \vec x) = \regdist{v_{0j}}(\vec x) &- \frac{1}{r}\del_r \left( \frac{1}{4\pi} \delta_{|\vec x|=r} n_j * \regdist{p_0}  \right) + \frac{1}{4\pi} \sigma_{ij}(ct) * \regdist{v_{0i}} \label{eq:ureformulappendix}
\end{align}

\end{theorem}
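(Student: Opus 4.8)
The target identities \eqref{eq:preformulappendix}, \eqref{eq:vsolution23appendix} and \eqref{eq:ureformulappendix} are the distributional counterparts of the smooth formulae \eqref{eq:preformulfunc}, \eqref{eq:vsolution23func} and \eqref{eq:ureformulfunc} of Corollary \ref{cor:solutionradial}. The plan is therefore to start from the representation \eqref{eq:phelmholtz}--\eqref{eq:vhelmholtz} of Theorem \ref{thm:solutionappendix} and to re-run, at the level of $D'(\mathbb R^3)$, the classical manipulation sketched around \eqref{eq:derivativetransfer}, with the two transfer rules of Lemma \ref{lem:differentiationandgauss} and the distributional divergence theorem (Theorem \ref{thm:deriavtiveradialstep}) taking the place of differentiation under the integral sign and of Gauss' theorem. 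Throughout, $\frac{1}{4\pi}\frac{\delta_{|\vec x|=r}}{r}*\regdist{p_0}$, $\frac{1}{4\pi}\delta_{|\vec x|=r}n_i*\regdist{v_{0i}}$ etc. are read as the distributional versions of $r$, $r^2$ times the corresponding spherical means, so that each term of the radial formulae corresponds to one term of the smooth ones.

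For the pressure \eqref{eq:preformulappendix}, the first term $\del_r(\frac{1}{4\pi}\frac{\delta_{|\vec x|=r}}{r}*\regdist{p_0})$ is handled exactly as in \eqref{eq:derivativetransfer}: the elementary identity $\del_r^2(rg)=\frac1r\del_r(r^2\del_r g)$, then \eqref{eq:diffrlemma} to replace $\del_r$ by $n_i\del_i$ inside the mean, then \eqref{eq:gausslemma} to replace $n_i\del_i$ by $\div\grad$ over the ball, and one integration in $r$, reproduce the $\regdist{p_0}$ and $\div\grad\regdist{p_0}*\regdist{\Theta_{|\vec x|\leq ct}/|\vec x|}$ contributions. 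The second term $-\frac1r\del_r(\frac{1}{4\pi}\delta_{|\vec x|=r}n_i*\regdist{v_{0i}})$ reduces to $-\frac{1}{4\pi}\frac{1}{ct}(\div\regdist{\vec v_0}*\delta_{|\vec x|=ct})$ by a single application of Theorem \ref{thm:deriavtiveradialstep}, which converts the surface-normal integral into the ball integral of $\div\vec v_0$. This part is essentially bookkeeping.

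The velocity formula \eqref{eq:vsolution23appendix} is where the real work lies. The term $\frac{1}{4\pi}(\grad\div\regdist{\vec v_0}*\regdist{\Theta_{|\vec x|\leq ct}/|\vec x|})$ carries the two derivatives $\del_j\del_i$ acting on $v_{0i}$, and these must be transferred onto the radial kernel by alternating \eqref{eq:diffrlemma} and \eqref{eq:gausslemma}/Theorem \ref{thm:deriavtiveradialstep}, integrating by parts in $r$ at each step. The endpoint contributions at $r=ct$ furnish the $\del_r(\frac{1}{4\pi}\frac{\delta_{|\vec x|=r}}{r}n_in_j*\regdist{v_{0i}})$ and $\frac{1}{4\pi}\frac{\delta_{|\vec x|=r}}{r^2}(\delta_{ij}-3n_in_j)*\regdist{v_{0i}}$ terms, while the remaining singular radial integral assembles into $\frac{1}{4\pi}\Sigma_{ij}(ct)*\regdist{v_{0i}}$. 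The coefficient $\frac23$ in front of $\regdist{v_{0j}}$ arises from the local ($r\to0$) endpoint: the second-derivative kernel at the origin averages the tensor $n_in_j$ to $\frac13\delta_{ij}$ via \eqref{eq:concellationtensornomral}, producing the extra $-\frac13\regdist{v_{0j}}$. I expect this to be the main obstacle, for two reasons: one must account honestly for every endpoint term generated by the repeated integration by parts in $r$, and one must check that the leftover radial integral actually converges — which is guaranteed only by the cancellation $3n_in_j-\delta_{ij}$ built into $\Sigma_{ij}$, i.e. by exactly the estimate already carried out in the existence proof of Theorem \ref{def:sigmaij}.

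Finally, for the equivalence of \eqref{eq:vsolution23appendix} and \eqref{eq:ureformulappendix}, the shared term $-\frac1r\del_r(\frac{1}{4\pi}\delta_{|\vec x|=r}n_j*\regdist{p_0})$ cancels, and it remains to show that $\frac{1}{4\pi}\sigma_{ij}(ct)*\regdist{v_{0i}}$ equals $-\frac13\regdist{v_{0j}}$ plus the $\del_r$-, the $\frac{\delta_{|\vec x|=r}}{r^2}(\delta_{ij}-3n_in_j)$- and the $\Sigma_{ij}$-terms. For this I integrate the two nested radial derivatives in the definition \eqref{eq:defdistrsigma} of $\sigma_{ij}(ct)$ by parts — this is precisely the expansion already performed in part ii) of the proof of Theorem \ref{def:sigmaij} — so that the $r=ct$ endpoints yield the $\del_r$- and $\frac{\delta_{|\vec x|=r}}{r^2}$-terms, the $r\to0$ endpoint supplies the $-\frac13\regdist{v_{0j}}$ (turning the $\frac23$ of \eqref{eq:vsolution23appendix} into the $1$ of \eqref{eq:ureformulappendix}), and the bulk integral is $\Sigma_{ij}(ct)$. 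Once these identifications are made the equivalence is immediate.
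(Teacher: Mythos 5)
Your proposal takes essentially the same route as the paper's own proof: the paper likewise transfers the radial derivatives into the spatial derivative operators of \eqref{eq:phelmholtz}--\eqref{eq:vhelmholtz} by combining the elementary radial identity with Lemma \ref{lem:differentiationandgauss} and Theorem \ref{thm:deriavtiveradialstep} and integrating in $r$, carrying out only the representative $\regdist{p_0}$-term (the distributional analogue of \eqref{eq:derivativetransfer}) and asserting that the remaining terms follow "in a similar way". Your outline is in fact more detailed than the paper's, since you additionally sketch the velocity terms, the endpoint bookkeeping that produces the $\frac23\regdist{v_{0j}}$ via \eqref{eq:concellationtensornomral}, and the expansion of $\sigma_{ij}(ct)$ into $\Sigma_{ij}(ct)$ plus boundary terms (as in the proof of Theorem \ref{def:sigmaij}) that establishes the equivalence of \eqref{eq:vsolution23appendix} and \eqref{eq:ureformulappendix}.
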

{\sl Note}: The convolutions above let \emph{Green's kernel} for \eqref{eq:acousticv}--\eqref{eq:acousticp} appear clearly.
\begin{proof}

In order to transfer the $r$-derivatives in \eqref{eq:preformulappendix}--\eqref{eq:ureformulappendix} into the derivative operators in \eqref{eq:phelmholtz}--\eqref{eq:vhelmholtz} one uses the Gauss theorem for the sphere of radius $r$. For example, differentiating
\begin{align*}
 \del_r \left( \regdist{p_0} * \frac{\delta_{|\vec x|=r}}{r} \right )  
\end{align*}
with respect to $r$ yields
\begin{align*}
 \del_r^2 \left( \regdist{p_0} * \frac{\delta_{|\vec x|=r}}{r} \right ) &= \frac{1}{r} \del_r \left( r^2 \del_r \left( \regdist{p_0} * \frac{\delta_{|\vec x|=r}}{r^2} \right ) \right ) 
\intertext{by elementary manipulations. According to Lemma \ref{lem:differentiationandgauss}\,\ref{it:lemdifferentiationandgaussradial}), differentiation with respect to $r$ can be replaced by $\vec n \cdot \nabla$ inside the spherical mean:}
 &= \frac{1}{r} \del_r \left( r^2   \left( \frac{\del}{\del x_i} \regdist{p_0} * \frac{\delta_{|\vec x|=r} n_i}{r^2} \right ) \right ) 
\intertext{and by Gauss theorem (Lemma \ref{lem:differentiationandgauss}\,\ref{it:lemdifferentiationandgaussgauss})) as well as Theorem \ref{thm:deriavtiveradialstep}}
 &=  - \frac{1}{r} \del_r \left ( \nabla_{\vec x} \cdot \nabla_{\vec x} \regdist{p_0} * \regdist{\Theta_{|\vec x| \leq r}} \right ) 
 =   \frac{1}{r} \left ( \nabla_{\vec x} \cdot \nabla_{\vec x} \regdist{p_0} * \delta_{|\vec x| = r} \right ) 
\end{align*}
Integrating over $r$, and evaluating at $r=ct$ yields the sought identity
\begin{align*}
\left. \del_r \left( \regdist{p_0} * \frac{\delta_{|\vec x|=r}}{r} \right ) \right |_{r=ct} &= \regdist{p_0} +  \nabla \cdot \nabla \regdist{p_0} * \regdist{\frac{\Theta_{|\vec x| \leq ct}}{|\vec x|}} 
\end{align*}

In a similar way the equivalence of the other terms can be shown and is omitted here.

\end{proof}


\begin{thebibliography}{LMSWZ04}

\bibitem[Abg93]{abgrall93}
Remi Abgrall.
\newblock A genuinely multidimensional {R}iemann solver.
\newblock {\em hal:inria-00074814}, 1993.

\bibitem[AG15]{amadori2015}
Debora Amadori and Laurent Gosse.
\newblock {\em Error Estimates for Well-Balanced Schemes on Simple Balance
  Laws: One-Dimensional Position-Dependent Models}.
\newblock BCAM Springer Briefs in Mathematics, Springer, 2015.

\bibitem[Bal12]{balsara12}
Dinshaw~S Balsara.
\newblock A two-dimensional {HLLC} {R}iemann solver for conservation laws:
  {A}pplication to {E}uler and magnetohydrodynamic flows.
\newblock {\em Journal of Computational Physics}, 231(22):7476--7503, 2012.

\bibitem[Bar17]{barsukow17lilleproceeding}
Wasilij Barsukow.
\newblock Stationarity and vorticity preservation for the linearized {E}uler
  equations in multiple spatial dimensions.
\newblock In {\em International Conference on Finite Volumes for Complex
  Applications}, pages 449--456. Springer, 2017.

\bibitem[Bar19]{barsukow17a}
Wasilij Barsukow.
\newblock Stationarity preserving schemes for multi-dimensional linear systems.
\newblock {\em Mathematics of Computation}, 88(318):1621--1645, 2019.

\bibitem[BEK{\etalchar{+}}17]{barsukow16}
Wasilij Barsukow, Philipp~VF Edelmann, Christian Klingenberg, Fabian Miczek,
  and Friedrich~K R{\"o}pke.
\newblock A numerical scheme for the compressible low-{M}ach number regime of
  ideal fluid dynamics.
\newblock {\em Journal of Scientific Computing}, 72(2):623--646, 2017.

\bibitem[BZW01]{brio01}
Moysey Brio, Aramais~R Zakharian, and Garry~M Webb.
\newblock Two-dimensional {R}iemann solver for {E}uler equations of gas
  dynamics.
\newblock {\em Journal of Computational Physics}, 167(1):177--195, 2001.

\bibitem[CGK13]{chalons13}
Christophe Chalons, Mathieu Girardin, and Samuel Kokh.
\newblock Large time step and asymptotic preserving numerical schemes for the
  gas dynamics equations with source terms.
\newblock {\em SIAM Journal on Scientific Computing}, 35(6):A2874--A2902, 2013.

\bibitem[CH62]{courant62}
Richard Courant and David Hilbert.
\newblock Methods of mathematical physics. {V}ol. {II}: Partial differential
  equations.
\newblock {\em Interscience, New York}, 1962.

\bibitem[Col90]{colella90}
Phillip Colella.
\newblock Multidimensional upwind methods for hyperbolic conservation laws.
\newblock {\em Journal of Computational Physics}, 87(1):171--200, 1990.

\bibitem[Del10]{dellacherie10}
St{\'e}phane Dellacherie.
\newblock Analysis of {G}odunov type schemes applied to the compressible
  {E}uler system at low {M}ach number.
\newblock {\em Journal of Computational Physics}, 229(4):978--1016, 2010.

\bibitem[DOR10]{dellacherierieper10}
St{\'e}phane Dellacherie, Pascal Omnes, and Felix Rieper.
\newblock The influence of cell geometry on the {G}odunov scheme applied to the
  linear wave equation.
\newblock {\em Journal of Computational Physics}, 229(14):5315--5338, 2010.

\bibitem[ER13]{eymann13}
Timothy~A Eymann and Philip~L Roe.
\newblock Multidimensional active flux schemes.
\newblock In {\em 21st AIAA computational fluid dynamics conference}, 2013.

\bibitem[Eva98]{evans98}
Lawrence~C Evans.
\newblock Partial differential equations.
\newblock {\em Graduate Studies in Mathematics}, 19, 1998.

\bibitem[Fey98a]{fey98a}
Michael Fey.
\newblock Multidimensional upwinding. {P}art {I}. {T}he method of transport for
  solving the {E}uler equations.
\newblock {\em Journal of Computational Physics}, 143(1):159--180, 1998.

\bibitem[Fey98b]{fey98b}
Michael Fey.
\newblock Multidimensional upwinding. {P}art {II}. {D}ecomposition of the
  {E}uler equations into advection equations.
\newblock {\em Journal of Computational Physics}, 143(1):181--199, 1998.

\bibitem[FG18]{franck17}
Emmanuel Franck and Laurent Gosse.
\newblock Stability of a {K}irchhoff--{R}oe scheme for two-dimensional
  linearized {E}uler systems.
\newblock {\em ANNALI DELL'UNIVERSITA' DI FERRARA}, 64(2):335--360, Nov 2018.

\bibitem[GLR93]{gilquin93b}
Herv{\'e} Gilquin, J{\'e}r{\^o}me Laurens, and Carole Rosier.
\newblock Multi-dimensional {R}iemann problems for linear hyperbolic systems:
  {P}art {II}.
\newblock In {\em Nonlinear Hyperbolic Problems: Theoretical, Applied, and
  Computational Aspects}, pages 284--290. Springer, 1993.

\bibitem[GLR96]{gilquin96}
Herv{\'e} Gilquin, J{\'e}r{\^o}me Laurens, and Carole Rosier.
\newblock Multi-dimensional {R}iemann problems for linear hyperbolic systems.
\newblock {\em ESAIM: Mathematical Modelling and Numerical Analysis},
  30(5):527--548, 1996.

\bibitem[GM04]{guillard04}
Herv{\'e} Guillard and Angelo Murrone.
\newblock On the behavior of upwind schemes in the low {M}ach number limit:
  {II}. {G}odunov type schemes.
\newblock {\em Computers \& fluids}, 33(4):655--675, 2004.

\bibitem[{G}od97]{godunov97}
Sergey~Konstantinovich {G}odunov.
\newblock {\em Vospominaniya o raznostnyh shemah: doklad na mezhdunarodnom
  simpoziume "Metod {G}odunova v gazovoy dinamike", Michigan 1997}.
\newblock Nauchnaya Kniga, 1997.

\bibitem[{G}od08]{godunov08}
Sergei~Konstantinovich {G}odunov.
\newblock Reminiscences about numerical schemes.
\newblock {\em arXiv preprint arXiv:0810.0649}, 2008.

\bibitem[GR13]{godlewski13}
Edwige Godlewski and Pierre-Arnaud Raviart.
\newblock {\em Numerical approximation of hyperbolic systems of conservation
  laws}, volume 118.
\newblock Springer Science \& Business Media, 2013.

\bibitem[GS64]{gelfand64}
Israel~M Gelfand and Georgi~E Shilov.
\newblock Generalized functions. {V}ol. 1. {P}roperties and operations.
  {T}ranslated from the {R}ussian by {E}ugene {S}aletan, 1964.

\bibitem[GV99]{guillard99}
Herv{\'e} Guillard and C{\'e}cile Viozat.
\newblock On the behaviour of upwind schemes in the low {M}ach number limit.
\newblock {\em Computers \& fluids}, 28(1):63--86, 1999.

\bibitem[GZI{\etalchar{+}}76]{godunov76}
SK~{G}odunov, AV~Zabrodin, M~Ia Ivanov, AN~Kraiko, and GP~Prokopov.
\newblock Numerical solution of multidimensional problems of gas dynamics.
\newblock {\em Moscow Izdatel Nauka}, 1, 1976.

\bibitem[H{\"o}r13]{hormander13}
Lars H{\"o}rmander.
\newblock {\em Linear partial differential operators}, volume 116.
\newblock Springer, 2013.

\bibitem[Joh78]{john78}
Fritz John.
\newblock Partial differential equations.
\newblock {\em Applied Mathematical Sciences}, 1, 1978.

\bibitem[Joh81]{john81}
Fritz John.
\newblock Plane waves and spherical means applied to partial differential
  equations.
\newblock 1981.

\bibitem[Kle95]{klein95}
Rupert Klein.
\newblock Semi-implicit extension of a {G}odunov-type scheme based on low
  {M}ach number asymptotics {I}: {O}ne-dimensional flow.
\newblock {\em Journal of Computational Physics}, 121(2):213--237, 1995.

\bibitem[KM81]{klainerman81}
Sergiu Klainerman and Andrew Majda.
\newblock Singular limits of quasilinear hyperbolic systems with large
  parameters and the incompressible limit of compressible fluids.
\newblock {\em Communications on Pure and Applied Mathematics}, 34(4):481--524,
  1981.

\bibitem[LeV97]{leveque97}
Randall~J LeVeque.
\newblock Wave propagation algorithms for multidimensional hyperbolic systems.
\newblock {\em Journal of Computational Physics}, 131(2):327--353, 1997.

\bibitem[LeV02]{leveque02}
Randall~J LeVeque.
\newblock {\em Finite volume methods for hyperbolic problems}, volume~31.
\newblock {C}ambridge {U}niversity {P}ress, 2002.

\bibitem[LG13]{li13}
Xue-song Li and Chun-wei Gu.
\newblock Mechanism of {R}oe-type schemes for all-speed flows and its
  application.
\newblock {\em Computers \& Fluids}, 86:56--70, 2013.

\bibitem[LLMW03]{lukacova03}
Jiequan Li, Maria Lukacova-Medvidova, and Gerald Warnecke.
\newblock Evolution {G}alerkin schemes applied to two-dimensional {R}iemann
  problems for the wave equation system.
\newblock {\em DYNAMICAL SYSTEMS}, 9(3):559--576, 2003.

\bibitem[LMMW00]{lukacova00}
Maria Lukacova-Medvidova, K~Morton, and Gerald Warnecke.
\newblock Evolution {G}alerkin methods for hyperbolic systems in two space
  dimensions.
\newblock {\em Mathematics of Computation of the American Mathematical
  Society}, 69(232):1355--1384, 2000.

\bibitem[LMMW04]{lukacova04a}
Maria Lukacova-Medvidova, KW~Morton, and Gerald Warnecke.
\newblock Finite volume evolution {G}alerkin methods for hyperbolic systems.
\newblock {\em SIAM Journal on Scientific Computing}, 26(1):1--30, 2004.

\bibitem[LMSWZ04]{lukacova04}
Maria Lukacova-Medvidova, Jitka Saibertova, Gerald Warnecke, and Yousef
  Zahaykah.
\newblock On evolution {G}alerkin methods for the {M}axwell and the linearized
  {E}uler equations.
\newblock {\em Applications of Mathematics}, 49(5):415--439, 2004.

\bibitem[LS02]{li02}
Tatsian Li and Wancheng Sheng.
\newblock The general {R}iemann problem for the linearized system of
  two-dimensional isentropic flow in gas dynamics.
\newblock {\em Journal of mathematical analysis and applications},
  276(2):598--610, 2002.

\bibitem[MR01]{morton01}
Keith~William Morton and Philip~L Roe.
\newblock Vorticity-preserving {L}ax-{W}endroff-type schemes for the system
  wave equation.
\newblock {\em SIAM Journal on Scientific Computing}, 23(1):170--192, 2001.

\bibitem[MS01]{metivier01}
Guy M{\'e}tivier and Steve Schochet.
\newblock The incompressible limit of the non-isentropic {E}uler equations.
\newblock {\em Archive for rational mechanics and analysis}, 158(1):61--90,
  2001.

\bibitem[O'N83]{oneill1983}
Barrett O'Neill.
\newblock {\em Semi-{R}iemannian Geometry With Applications to Relativity,
  103}, volume 103.
\newblock Academic press, 1983.

\bibitem[OSB{\etalchar{+}}16]{birken16}
Kai O{\ss}wald, Alexander Siegmund, Philipp Birken, Volker Hannemann, and
  Andreas Meister.
\newblock L2roe: a low dissipation version of {R}oe's approximate {R}iemann
  solver for low {M}ach numbers.
\newblock {\em International Journal for Numerical Methods in Fluids},
  81(2):71--86, 2016.

\bibitem[Ost97]{ostkamp97}
Stella Ostkamp.
\newblock Multidimensional characteristic {G}alerkin methods for hyperbolic
  systems.
\newblock {\em Mathematical methods in the applied sciences},
  20(13):1111--1125, 1997.

\bibitem[Rau91]{rauch91}
Jeffrey Rauch.
\newblock Partial differential equations.
\newblock {\em Graduate Texts in Mathematics, vol. 128}, 1991.

\bibitem[Roe17a]{roe17a}
Philip Roe.
\newblock Is discontinuous reconstruction really a good idea?
\newblock {\em Journal of Scientific Computing}, 73(2-3):1094--1114, 2017.

\bibitem[Roe17b]{roe17}
Philip Roe.
\newblock Multidimensional upwinding.
\newblock {\em Handbook of Numerical Analysis}, 18:53--80, 2017.

\bibitem[Rud91]{rudin91}
Walter Rudin.
\newblock Functional analysis. {I}nternational series in pure and applied
  mathematics, 1991.

\bibitem[Sch78]{schwartz78}
Laurent Schwartz.
\newblock {\em Th{\'e}orie des distributions}.
\newblock Hermann Paris, 1978.

\bibitem[Tay96]{taylor96}
Michael~Eugene Taylor.
\newblock {\em Partial differential equations. {I}. {B}asic theory}.
\newblock Springer, 1996.

\bibitem[TD08]{thornber08}
BJR Thornber and D~Drikakis.
\newblock Numerical dissipation of upwind schemes in low {M}ach flow.
\newblock {\em International journal for numerical methods in fluids},
  56(8):1535--1541, 2008.

\bibitem[Tor09]{toro09}
Eleuterio~F Toro.
\newblock {\em {R}iemann solvers and numerical methods for fluid dynamics: a
  practical introduction}.
\newblock Springer Science \& Business Media, 2009.

\bibitem[Zhe01]{zheng12}
Yuxi Zheng.
\newblock {\em Systems of Conservation Laws: Two-Dimensional Riemann Problems}.
\newblock Springer Science \& Business Media, 2001.

\bibitem[Zui02]{zuily02}
Claude Zuily.
\newblock {\em {\'E}l{\'e}ments de distributions et d'{\'e}quations aux
  d{\'e}riv{\'e}es partielles: cours et probl{\`e}mes r{\'e}solus}, volume 130.
\newblock Dunod, 2002.

\end{thebibliography}
\end{document}